\renewcommand{\Gamma}{G}
\title{Stable commutator length on mapping class groups}
\author{Mladen Bestvina, Ken Bromberg and Koji Fujiwara\thanks{The
    first two authors gratefully acknowledge the support by the National
    Science Foundation. The third author is supported in part by
Grant-in-Aid for Scientific Research (No. 23244005)}} 
\date{\today}
\newtheorem{thm}{Theorem}[section]
\newtheorem{lemma}[thm]{Lemma}
\newtheorem{cor}[thm]{Corollary}
\newtheorem{prop}[thm]{Proposition}
{}
\newtheorem*{A}{Theorem A}{}
\newtheorem*{B}{Theorem B}{}
\theoremstyle{remark}
\newtheorem{remark}[thm]{Remark}
\newtheorem*{definition*}{Definition}
\newtheorem*{remark*}{Remark}
\newcommand{\bY}{{\bf Y}}
\def\cC{{\mathcal C}}
\def\R{{\mathbb R}}
\def\Z{{\mathbb Z}}
\def\S{{\Sigma}}
\def\diam{\operatorname{diam}}
\def\scl{\operatorname{scl}}
\renewcommand{\>}{\rangle}
\begin{document}

\maketitle

\begin{abstract}
Let $\Gamma$ be a finite index subgroup of the mapping class group
$MCG(\S)$ of a closed orientable
surface $\S$, possibly with punctures.  We give a precise condition
(in terms of the Nielsen-Thurston decomposition) when an element
$g\in\Gamma$ has positive stable commutator length. In
addition, we show that in these situations the stable commutator
length, if nonzero, is uniformly bounded away from 0. The method works
for certain subgroups of infinite index as well and we show $scl$ is
uniformly positive on the nontrivial elements of the Torelli group.
The proofs use
our earlier construction in \cite{bbf} of group actions on
quasi-trees.
\end{abstract}

\section{Introduction and statement of results}
Let $G$ be a group, and $[G,G]$ its commutator subgroup. For an
element $g \in [G,G]$, let $cl(g)=cl_G(g)$ denote the {\it commutator length}
of $g$, the least number of commutators whose product is equal to $g$.
We define $cl(g)=\infty$ for an element $g$ not in $[G,G]$.  For $g
\in G$, the {\it stable commutator length}, $\scl(g)=\scl_G(g)$, is
defined by
$$\scl(g)=\liminf_{n \to \infty} \frac{cl(g^n)}{n} \le \infty.$$
See for example \cite{scl} for a thorough account on scl. 

A function $H:G \to \R$ is a {\it quasi-morphism}
if 
$$\Delta(H):=\sup_{x,y\in G}
  |H(xy)-H(x)-H(y)| < \infty.$$ $\Delta(H)$ is called the {\it defect} of $H$.
Recall that a quasi-morphism $H:G\to\R$ is {\it homogeneous} if
$H(g^m)=mH(g)$ for all $g\in G$ and $m\in \Z$.

A theme in the subject is to classify elements $g$ in a given group
for which $scl(g)>0$. Note that in the following situations $cl(g^n)$
is bounded and therefore $scl(g)=0$:
\begin{enumerate}[(a)]
\item $g$ has finite order,
\item $g$ is {\it achiral}, i.e. $g^k$ is conjugate to $g^{-k}$ for
  some $k\neq 0$,\footnote{of course, $scl(g)=0$ if $g^k$ is conjugate
    to $g^l$ for some $k\neq l$, but in mapping class groups this is
    possible only when $k=\pm l$ or $g$ has finite order}
\item (Endo-Kotschick \cite{EnKo}) $g=g_1g_2=g_2g_1$ and
  $g_1$ is conjugate to $g_2^{-1}$,
\item more generally, $g$ is expressed as a commuting
  product $$g=g_1\cdots g_p$$ and $g_i^{n_i}$ are all conjugate for
  some $n_i\neq 0$, and $$\sum_i \frac 1{n_i}=0,$$ 
\item $g=g_1\cdots g_p$ is a commuting product and $cl(g_i^n)=0$ are
  bounded for
  all $i$.
\end{enumerate}

Roughly speaking, our main theorem states that for mapping classes $g$
we have $scl(g)>0$ unless writing an arotational power of $g$ as a
commuting product as in the Nielsen-Thurston decomposition implies
$scl(g)=0$ by the above properties.

Brooks \cite{brooks} showed that in free groups
$scl(g)>0$ for every nontrivial element $g$. Then
Epstein-Fujiwara \cite{epstein-fujiwara}, generalizing the Brooks
construction, proved that in hyperbolic
groups the above obstructions (a) and (b) are the only ones, namely, if $g$ has
infinite order and $g$ is {\it chiral} (i.e. not achiral) then $scl(g)>0$.

For $G=MCG(\Sigma)$ questions due to Geoff Mess about commutator
length of powers of Dehn twists appear on Kirby's list \cite{kirby}.
Using 4-manifold invariants, Endo-Kotschick 
\cite{EnKo.inventiones} and Korkmaz \cite{korkmaz} prove that
$scl(g)>0$ if $g$ is a Dehn twist and Baykur \cite{baykur} gave an
argument based on the Milnor-Wood inequality that Dehn twist in the
boundary curve has $scl=\frac 12$. Endo-Kotschick \cite{EnKo} also
note that in $MCG(\Sigma)$ there are additional obstructions to
$scl>0$: if $g,h$ commute and $h$ is conjugate to $g$ then
$scl(gh^{-1})=0$; for example this occurs if $g,h$ are Dehn twists in
disjoint curves in the same orbit. By contrast,
Calegari-Fujiwara \cite{calegari-fujiwara} prove that if $g$ is
pseudo-Anosov and chiral 
then $scl(g)>0$. The argument is based on the action of $MCG(\Sigma)$
on the curve graph, which is hyperbolic by \cite{MM}, with
pseudo-Anosov classes acting as hyperbolic isometries.

In this paper, for a subgroup $G<MCG(\Sigma)$ of finite index of the
mapping class group of a closed orientable surface $\Sigma$ (possibly
with punctures) we characterize those elements $g\in G$ for which
$scl(g)>0$, or equivalently
there exists a quasi-morphism $G\to \R$ which is unbounded on the
powers of $g$. The new feature is that we consider actions of a
certain subgroup $\mathcal S<MCG(\Sigma)$ of finite index on
hyperbolic spaces constructed in \cite{bbf}. In this way, for {\it any}
nontrivial element of $\mathcal S$ there is an action where this
element is a hyperbolic isometry.

Here is our main result. The definition of {\it chiral
essential classes} will 
be given later, but we describe them informally below.

\begin{A}[Theorem \ref{chirality}]
Let $G<MCG(\Sigma)$ be a subgroup of finite index and $g\in G$. Then
$scl(g)>0$ if and only if some chiral equivalence class of pure
components of $g$ is essential.
\end{A}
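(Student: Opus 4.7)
The plan is to handle the two implications separately, with a preliminary reduction to pure mapping classes. By passing to a further finite-index subgroup of $G$ consisting only of pure elements, and replacing $g$ by a suitable power (which only rescales $\scl$), I may assume $g$ admits a canonical Nielsen-Thurston decomposition $g = g_1 \cdots g_p$ as a commuting product of pseudo-Anosovs supported on pairwise disjoint subsurfaces and powers of Dehn twists along pairwise disjoint simple closed curves. The chiral equivalence classes should partition the $g_i$ according to ``some nonzero powers are conjugate in $G$'', and the chirality condition (``not achiral'') picks out those classes for which one cannot invert a power of the representative by conjugation.

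For the ``only if'' direction, I would argue that if no chiral equivalence class of pure components is essential, then $g$ falls under one of the obstructions (a)--(e) listed in the introduction and hence $\scl(g) = 0$. Achiral classes contribute via (b), while each non-essential chiral class is built precisely so that its components can be paired with exponents $n_i$ satisfying $\sum 1/n_i = 0$, so Endo--Kotschick style cancellation (d) applies. Once the definition of ``essential'' is unpacked, this direction should be a combinatorial verification.

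For the ``if'' direction, which is the heart of Theorem~A, I would apply the construction of \cite{bbf} to build an isometric action of a finite-index subgroup $\mathcal{S} < MCG(\S)$ on a hyperbolic quasi-tree $Y$ in which a chosen representative of the essential chiral class acts loxodromically with a WPD-type property, and in which $g$ is \emph{not} conjugate to $g^{-1}$ along its axis. Applying the Bestvina--Fujiwara counting-quasi-morphism construction, refined as in \cite{calegari-fujiwara}, would produce a homogeneous quasi-morphism $h$ on $\mathcal{S}$ with $h(g) \neq 0$; after intersecting with $G$ and using the finite-index hypothesis, Bavard duality then gives $\scl_G(g) > 0$. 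The pseudo-Anosov subcase already appears in \cite{calegari-fujiwara} via the curve graph, so the new content is obtaining loxodromic actions for classes consisting of Dehn twists or of pseudo-Anosovs supported on proper subsurfaces.

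The hardest step will be organizing the \cite{bbf} input data so that precisely the essential chiral class becomes loxodromic with the right chirality, and so that enough of the finite-index subgroup still acts. The naive curve graph action collapses Dehn twists, and the projection complex construction must be seeded with a carefully chosen $\mathcal{S}$-orbit of subsurfaces or curves that encodes the essential class. Ensuring both the WPD condition and the failure of $g$ to invert its axis along this action---uniformly across all essential classes, including reducible ones---is the main obstacle I foresee, and is where the paper's specific technical machinery will likely be spent.
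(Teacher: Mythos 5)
Your overall strategy is the paper's: Nielsen--Thurston decomposition into commuting pure pieces, the obstructions (b),(d),(e) for the vanishing direction (this is fine, and is essentially how Proposition \ref{cl.bounded} argues; the paper's proof of Theorem \ref{chirality} instead just checks that every homogeneous quasi-morphism vanishes on $g$), and for positivity an action from \cite{bbf} on which a representative of the essential class is loxodromic with a WPD-like property, fed into a Brooks/Calegari--Fujiwara counting quasi-morphism. But the ``if'' direction as you describe it has two genuine gaps. First, producing a quasi-morphism on $\mathcal S$ (or on $G\cap\mathcal S$) that is unbounded on powers of $g$ does not give $\scl_G(g)>0$: since $cl_G\leq cl_{G\cap\mathcal S}$ one only gets $\scl_G\leq\scl_{G\cap\mathcal S}$, so ``intersecting with $G$ and using the finite-index hypothesis'' is exactly the wrong direction. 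One must \emph{induce} the quasi-morphism up to $G$ by summing $H'(\gamma_i\gamma\gamma_i^{-1})$ over coset representatives of $G/(G\cap\mathcal S)$ (as in \cite{rank1}), and then the real work is showing the summands do not cancel: conjugates $\gamma_i g_1\gamma_i^{-1}$ may be loxodromic on the same space, and one needs either bounded projections of their axes onto the axis of $g_1$ (this is where acylindricity enters, via Lemma \ref{brian} and Lemma \ref{bounded.projection}) or, when the axes are parallel, chirality \emph{in $G$} to guarantee the same sign. Your proposal locates the chirality issue inside the action of $\mathcal S$, but the dangerous conjugators are precisely the coset representatives outside $G\cap\mathcal S$.

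Second, your hope that ``precisely the essential chiral class becomes loxodromic'' cannot be realized: any other component of $g$ whose support lies in the same $\mathcal S$-orbit $\bY$ (in particular components of a \emph{different} essential chiral class on a homeomorphic subsurface) is automatically loxodromic on $\cC(\bY)$, and its contribution to $H(g)$ could a priori cancel that of the chosen class. The paper deals with this by choosing the essential class extremally --- maximal complexity of the supporting subsurface, and among those maximal translation length of the primitive root --- so that Lemma \ref{brian} and Corollary \ref{bf+++}(d) force the quasi-morphism to be bounded on powers of every component outside the chosen class (Claim 3), and by using essentiality quantitatively: conjugacy invariance forces the class to contribute $A(\frac1{m_1}+\cdots+\frac1{m_p})$, nonzero exactly because the class is essential (Claim 2). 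Your proposal asserts $h(g)\neq 0$ without either of these ingredients, and that assertion is where the theorem actually lives. (A smaller point: the stabilizer of the foliation pair is far from virtually cyclic, so the genuinely WPD machinery of \cite{calegari-fujiwara} does not apply as is; one needs the WWPD version of the counting construction, which the paper develops via the quasi-tree promotion.)
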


Since $scl(g)>0$ if and only if $scl(g^k)>0$ for $k\neq 0$ we are free
to pass to powers, and we may assume that $g$ has the Nielsen-Thurston
decomposition in which there are no rotations, so $g$ is expressed as
a commuting product of Dehn twists and pseudo-Anosov pure components. 
Now consider the chirality of the pure components, e.g. every Dehn
twist is chiral.
Two chiral components of $g$ are {\it equivalent}
if they have nontrivial powers that are conjugate. A chiral
equivalence class is {\it essential} if after conjugating all to the
same supporting subsurface with the same pair of (un)stable
foliations, the product has infinite order.

We now state some corollaries and extensions of the Main Theorem.

\begin{B} Let $G$ be a subgroup of $MCG(\Sigma)$ of finite index.
\begin{itemize}
\item There is $\epsilon=\epsilon(G)>0$ so that if $g\in G$ with
  $scl_G(g)>0$ then $scl_G(g)>\epsilon$ (Proposition \ref{scl.bound}).
\item If $scl_G(g)=0$ then the sequence $cl_G(g^n)$ is bounded (Proposition \ref{cl.bounded}).
\item
There is a specific finite index subgroup $\mathcal S<MCG(\Sigma)$ so
that $scl_{\mathcal S}(g)>0$ for every nontrivial $g\in \mathcal S$ (the remark
after Theorem \ref{chirality}).
\item When $G=Ker[MCG(\Sigma)\to GL(H_1(\Sigma);\Z_3)]$, every
  exponentially growing element of $G$ has $scl_G>0$ (Corollary \ref{exponential}).
\item If $G=\mathcal T$ is the Torelli subgroup and $g\neq 1\in G$
  then $scl_G(g)>0$ (Theorem \ref{torelli}).
\end{itemize}
\end{B}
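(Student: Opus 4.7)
The plan is to adapt the chirality characterization of Theorem A to the infinite-index subgroup $\mathcal{T}$ and to show that for any nontrivial $g\in\mathcal{T}$ the obstructions (a)--(e) all fail, so that some chiral equivalence class of its Nielsen-Thurston components is essential in $\mathcal{T}$.

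Since $\mathcal{T}$ is torsion-free (a classical result), obstruction (a) already forces $g=1$. Replacing $g$ by a power costs nothing for $\scl$, so I may assume $g=g_1\cdots g_p$ is the arotational Nielsen-Thurston decomposition, each $g_i$ a power of a Dehn twist or a pseudo-Anosov supported on a subsurface $Y_i$. The key new input is that $\mathcal{T}$-conjugacy is strictly stronger than $MCG$-conjugacy: two components $g_i,g_j$ are $\mathcal{T}$-equivalent only if some $\phi\in\mathcal{T}$ carries $Y_i$ to $Y_j$ and matches the (un)stable foliations while acting trivially on $H_1(\Sigma;\Z)$. The $\mathcal{T}$-equivalence classes therefore refine the $MCG$ ones, and I would argue that for every such class the essentialness condition must hold once $g$ is nontrivial: in the multi-twist case, non-essentialness forces a cancellation among $\mathcal{T}$-equivalent twist exponents that trivializes a Johnson-type invariant, and collective non-essentialness of all classes then forces $g$ itself to be the identity; in the pseudo-Anosov case, non-essentialness would force the product of normalized components to be elliptic on $Y_i$, which again collapses to the identity by torsion-freeness.

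Once some chiral equivalence class is essential, I would invoke the projection-complex machinery of \cite{bbf} to build a $\mathcal{T}$-action on a quasi-tree in which the associated combined isometry is loxodromic, and extract from it a Bestvina--Fujiwara counting quasi-morphism on $\mathcal{T}$ unbounded on the powers of $g$; Bavard duality then gives $\scl_\mathcal{T}(g)>0$. The main obstacle is the infinite-index setting: one must verify the BBF projection axioms $\mathcal{T}$-equivariantly rather than just for a finite-index $G$, and one must handle the pure Torelli multi-twists (bounding pair maps and their higher-genus analogues) which are invisible to the curve graph action alone and require annular subsurface projections as building blocks of the projection complex so that each such element becomes loxodromic on the resulting quasi-tree.
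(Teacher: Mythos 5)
Your proposal engages only the last bullet (the Torelli group), and the route you take there has a genuine gap. The central problem is that the Nielsen--Thurston components of a nontrivial $g\in\mathcal T$ need not have \emph{any} power lying in $\mathcal T$: for a bounding pair map $g=T_aT_b^{-1}$ the individual twists $T_a,T_b$ act nontrivially on $H_1(\Sigma;\Z)$, so the chirality/essentiality framework of Theorem A --- which requires passing to a power of $g$ whose components lie in $G$, and which is why that theorem is stated only for finite-index $G$ --- cannot be transplanted to $\mathcal T$ as you propose. Worse, the bounding pair map directly refutes your claim that nontriviality forces essentiality: $a$ and $b$ are in the same Torelli orbit, so in any finite-index subgroup the class $\{T_a^k,T_b^{-k}\}$ has exponents $m_1=1$, $m_2=-1$ and is \emph{inessential}, every homogeneous quasi-morphism built by the counting construction vanishes on it, and indeed $\scl_{\Gamma_3}(g)=0$. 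Its positivity in $\mathcal T$ has nothing to do with an essential chiral class or with quasi-trees: by Johnson/\cite{BBM} such $g$ has infinite order in $H_1(\mathcal T)$, so no power of $g$ lies in $[\mathcal T,\mathcal T]$ and $\scl_{\mathcal T}(g)=\infty$ by convention. Your heuristic that ``non-essentialness trivializes a Johnson-type invariant'' is exactly backwards on this example.

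The paper's actual argument also shows that the heavy lifting you anticipate (verifying the BBF projection axioms $\mathcal T$-equivariantly for an infinite-index subgroup) is unnecessary. Since $\mathcal T<\Gamma_3$ and commutators in a subgroup are commutators in the ambient group, $\scl_{\mathcal T}(g)\geq \scl_{\Gamma_3}(g)$; so one runs the finite-index machinery in $\Gamma_3$ (Corollary \ref{exponential} for exponentially growing elements, Corollary \ref{multitwist} for multitwists containing a separating curve or with a nonzero homological twisting sum) and handles the single remaining case --- multitwists invisible to $\Gamma_3$, such as bounding pair maps --- by the abelianization of $\mathcal T$. You should also note that Theorem B contains four further assertions (the uniform lower bound $\epsilon(G)$ of Proposition \ref{scl.bound}, the boundedness of $cl(g^n)$ in Proposition \ref{cl.bounded}, the statement for $\mathcal S$, and Corollary \ref{exponential}), none of which your proposal addresses; the first two in particular require the quantitative bookkeeping of Steps 1--4 in the proof of Proposition \ref{scl.bound} and are not consequences of the qualitative statement of Theorem A.
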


{\it Convention.} All our constants will depend linearly on the
previous constants, so for example when we say ``there is
$C=C(\delta,\xi)$'' we mean that $C$ is bounded by a fixed multiple of
$\delta+\xi+1$. All $(L,A)$-quasi-geodesics and
$(L,A)$-quasi-isometries will have $L$ uniformly bounded (e.g. by 2 or 4)
and $A$ will depend linearly on the previous constants.

{\it Acknowledgements:} We thank Danny Calegari for his interest and
to Dan Margalit for pointing us to the
reference \cite{BBM}.

\section{Review}

Here we review some background.

\subsection{scl}

The following facts will be used (see for example \cite{scl}). 

\begin{prop}\label{p:scl}
Let $H:G\to\R$ be a quasi-morphism and $\Delta(H)$ its defect.
\begin{enumerate}[(i)]
\item every quasi-morphism $H:G\to\R$ differs from a unique
  homogeneous quasi-morphism $\hat H$ by a bounded function; in
  fact
$$\hat H(g)=\lim_{n\to\infty}\frac{H(g^n)}n$$
\item Suppose $g\in [G,G]$. Then $\scl(g)\geq \frac{|\hat
  H(g)|}{4\Delta(H)} $, and if $H$ is homogeneous then $\scl(g)\geq
  \frac{|H(g)|}{2\Delta(H)}$.
\end{enumerate}
\end{prop}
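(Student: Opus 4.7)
The plan is to prove the two parts in sequence. For part (i), the existence and uniqueness of the homogenization, I would first show the limit exists by iterating the defect inequality. From $|H(g^{a+b})-H(g^a)-H(g^b)|\le\Delta(H)$, a straightforward induction gives $|H(g^{mn})-mH(g^n)|\le(m-1)\Delta(H)$, hence
$$\left|\frac{H(g^{mn})}{mn}-\frac{H(g^n)}{n}\right|\le\frac{\Delta(H)}{n},$$
and a symmetric bound with $m,n$ swapped. Together these make $H(g^n)/n$ Cauchy, so the limit $\hat H(g)$ exists. A sharper telescoping along the sequence $n,2n,4n,\dots$ actually yields $|H(g^n)-n\hat H(g)|\le\Delta(H)$ for all $n$; in particular $\hat H - H$ is bounded by $\Delta(H)$.

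Next I would verify that $\hat H$ is itself a homogeneous quasi-morphism. Homogeneity for positive powers is immediate from the formula: $\hat H(g^k)=\lim_n H(g^{kn})/n = k\hat H(g)$. For negative powers one uses $|H(1)|\le\Delta(H)$ together with $|H(g^n)+H(g^{-n})-H(1)|\le\Delta(H)$ and divides by $n$. That $\hat H$ is a quasi-morphism follows formally from the fact that it differs from the quasi-morphism $H$ by a bounded function, giving a defect bound linear in $\Delta(H)$. Uniqueness is the standard observation that a bounded homogeneous function is identically zero, so any two homogenizations must agree.

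For part (ii), the crucial input is conjugation invariance of homogeneous quasi-morphisms. Since $(xyx^{-1})^n=xy^nx^{-1}$, the estimate $|H(xy^nx^{-1})-H(y^n)|$ is bounded uniformly in $n$ (controlling the boundary terms $H(x),H(x^{-1})$ via $|H(a)+H(a^{-1})|\le 2\Delta(H)$), so dividing by $n$ and passing to the limit gives $\hat H(xyx^{-1})=\hat H(y)$. Applied to a single commutator, $\hat H([a,b])=\hat H(aba^{-1})+\hat H(b^{-1})+e$ with $|e|\le\Delta(\hat H)$, and the first two terms cancel by conjugation invariance and $\hat H(b^{-1})=-\hat H(b)$, giving $|\hat H([a,b])|\le\Delta(\hat H)$. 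Now if $g=\prod_{i=1}^{c}[a_i,b_i]$ with $c=cl(g)$, adding the quasi-morphism defects from the $c-1$ products together with the $c$ commutator bounds yields $|\hat H(g)|\le(2c-1)\Delta(\hat H)\le 2\,cl(g)\Delta(\hat H)$.

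Finally I would apply this inequality to $g^n$ instead of $g$ and use homogeneity $\hat H(g^n)=n\hat H(g)$, obtaining $n|\hat H(g)|\le 2\,cl(g^n)\Delta(\hat H)$. Dividing by $n$ and taking the liminf gives the homogeneous-case inequality $\scl(g)\ge|\hat H(g)|/(2\Delta(\hat H))$. For general (not necessarily homogeneous) $H$ the stated bound follows from the estimate $\Delta(\hat H)\le 2\Delta(H)$, which one extracts using that $|H(g^n)-n\hat H(g)|\le\Delta(H)$ from part (i) together with the defect inequality for $H$ on the triple $(xy)^n$, $x^n$, $y^n$. The principal technical obstacle is tracking the constants carefully enough to recover exactly the factor $4$ in the general statement rather than a larger constant; this is essentially an accounting exercise but is the step most prone to slippage.
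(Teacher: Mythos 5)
Parts (i) and the homogeneous case of (ii) are correct as you have written them, and they follow the standard argument (the paper itself gives no proof here, citing \cite{scl}, where this is exactly the scheme: homogenize, prove conjugation invariance, bound $|\hat H([a,b])|$ by $\Delta(\hat H)$, and evaluate on $g^n$).

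The genuine gap is in the final step, where you pass from $\scl(g)\geq |\hat H(g)|/(2\Delta(\hat H))$ to the stated bound with $4\Delta(H)$. You need $\Delta(\hat H)\leq 2\Delta(H)$, and your proposed justification --- ``the defect inequality for $H$ on the triple $(xy)^n$, $x^n$, $y^n$'' --- is not an instance of the defect inequality, since $(xy)^n\neq x^ny^n$ in general: the defect only controls $|H(uv)-H(u)-H(v)|$ for the literal product $uv$. Indeed, if $|H((xy)^n)-H(x^n)-H(y^n)|\leq\Delta(H)$ held for all $n$, then combining it with $|H(g^n)-n\hat H(g)|\leq\Delta(H)$ and dividing by $n$ would force $\hat H(xy)=\hat H(x)+\hat H(y)$ for all $x,y$, i.e.\ every homogenization would be a homomorphism, which is false already for Brooks quasi-morphisms on free groups. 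The purely formal estimate you do have, $|\hat H-H|\leq\Delta(H)$, yields only $\Delta(\hat H)\leq 4\Delta(H)$, hence $\scl(g)\geq|\hat H(g)|/(8\Delta(H))$, weaker than claimed. So this is not an accounting exercise; the inequality $\Delta(\hat H)\leq 2\Delta(H)$ is true (it is Lemma 2.58 in \cite{scl}) but requires an actual idea, for instance: write $x^ny^n=c_0c_1\cdots c_{n-1}$ with $c_k=x^{n-k}(yx)x^{-(n-k)}$, each a conjugate of $xy$, so that conjugation invariance of $\hat H$ and $|H-\hat H|\leq\Delta(H)$ give $|H(x^ny^n)-n\hat H(xy)|\leq(2n-1)\Delta(H)$, while the genuine defect inequality applied to the product $x^n\cdot y^n$ gives $|H(x^ny^n)-n\hat H(x)-n\hat H(y)|\leq 3\Delta(H)$; dividing by $n$ and letting $n\to\infty$ gives $|\hat H(xy)-\hat H(x)-\hat H(y)|\leq 2\Delta(H)$, and with this lemma your argument does deliver the constant $4$.
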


Also note that any homogeneous quasi-morphism is constant on conjugacy
classes and it is a homomorphism when restricted to an abelian subgroup.

\begin{remark} 
The Bavard duality asserts that for a given $g$  
with $0<\scl(g) (<\infty)$, second inequality in (ii)
  is achieved for a suitable homogeneous $H$ with $0<\Delta(H)$. 
See \cite{bavard} and
  \cite{scl}. 
\end{remark}

\subsection{Quasi-trees and the bottleneck criterion}\label{s:quasi-trees}

All graphs will be connected and endowed with the path metric in which
edge lengths are 1.
A {\it quasi-tree} is a graph quasi-isometric to a tree. It is a
theorem of Manning \cite{pseudocharacters} that a graph $Q$ is a
quasi-tree if and only if it satisfies the {\it bottleneck property}:
there is a number $\Delta\geq 0$ (referred to as a {\it bottleneck
  constant}) such that any two vertices $x,y\in Q$
are connected by a path $\alpha$ with the property that any other path
connecting $x$ to $y$ necessarily contains $\alpha$ in its
$\Delta$-neighborhood. A quasi-tree is $\delta$-hyperbolic with
$\delta$ depending linearly only on $\Delta$.\footnote{i.e. it is
  bounded by a fixed multiple of $\Delta+1$}

\begin{thm}\cite{pseudocharacters}
Let $Q$ be a graph satisfying the bottleneck property with constant
$\Delta$. Then there is a tree $T$ and a $(4,A)$-quasi-isometry
$\alpha:Q\to T$ with $A$ depending linearly on $\Delta$.
\end{thm}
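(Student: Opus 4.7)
The plan is to approximate $Q$ by sampling bottleneck paths from a fixed basepoint at scale $\Delta$ and gluing them into a tree. Fix a vertex $x_0 \in Q$ and for each vertex $y$ pick a bottleneck path $\gamma_y$ from $x_0$ to $y$. After absorbing $\Delta$ into the additive constants one may take each $\gamma_y$ to be a geodesic, since every geodesic from $x_0$ to $y$ lies in the $\Delta$-neighborhood of any bottleneck path.

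The central tool is a fellow-traveling lemma: for any two vertices $y, z$ there is a split point $p_{y,z}$ on $\gamma_y$ (with a companion on $\gamma_z$) such that $d(x_0, p_{y,z})$ is within $O(\Delta)$ of the Gromov product $(y \mid z)_{x_0}$, the initial segments of $\gamma_y$ and $\gamma_z$ up to $p_{y,z}$ stay uniformly $O(\Delta)$-close, and the terminal segments rapidly pull apart. This is a direct consequence of the bottleneck property applied to the concatenation $\overline{\gamma_y}\cdot\gamma_z$: continued proximity of $\gamma_y$ and $\gamma_z$ past the split would let one splice an alternate path from $x_0$ to $y$ that evades a stretch of $\gamma_y$ outside its $\Delta$-neighborhood, contradicting the bottleneck property.

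With this in hand I would construct $T$ as follows. Sample each $\gamma_y$ at vertices whose distance from $x_0$ is an integer multiple of $\Delta$, let the vertex set of $T$ be the quotient of the collection of sampled points by the relation ``lies within $C\Delta$ in $Q$'' for a suitable constant $C$, and connect consecutive samples along each $\gamma_y$ by an edge of $T$. That $T$ is a tree amounts to checking it has no embedded cycles; the fellow-traveling lemma ensures that any would-be cycle terminates in a forced identification of two samples at the same depth along diverging paths, contradicting the split-point description.

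Define $\alpha \colon Q \to T$ by sending $y$ to the class of the last sample on $\gamma_y$. Then $d_T(\alpha(y), \alpha(z))$ equals, up to $O(1)$, $d(x_0, y) + d(x_0, z) - 2\, d(x_0, p_{y,z})$ measured in units of $\Delta$, which by the split-point lemma is comparable to $d_Q(y,z)$ with multiplicative constant bounded by $4$ and additive constant linear in $\Delta$; the upper bound is immediate from the split description, and the lower bound uses the no-cycles property to rule out shortcuts in $T$. The main obstacle is the fellow-traveling lemma together with the no-cycles verification: both encode the full geometric content of the bottleneck property, and all remaining estimates follow by careful bookkeeping of constants, each of which is easily seen to depend linearly on $\Delta$.
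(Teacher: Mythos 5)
Your route differs from the paper's (which simply invokes Manning's explicit construction --- vertices of $T$ are the path components of the annuli $B(*,kR)-B(*,(k-1)R)$ with $R=20\Delta$, together with his explicit estimate $8\Delta\, d(x,y)-16\Delta\leq d(\beta(x),\beta(y))\leq 26\Delta\, d(x,y)$, and then a rescaling of the tree metric by $8\Delta$ to get the multiplicative constant $4$), but as written your argument has two genuine problems. The decisive one is the final quantitative claim: with your tree having unit-length edges and samples taken every $\Delta$ along the geodesics, your own formula gives $d_T(\alpha(y),\alpha(z))\approx d_Q(y,z)/\Delta$ up to additive errors, so $\alpha$ is a $(\mathrm{const}\cdot\Delta,A)$-quasi-isometry, not a $(4,A)$-quasi-isometry; the sentence asserting comparability to $d_Q(y,z)$ ``with multiplicative constant bounded by $4$'' is false for large distances when $\Delta$ is large. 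This is exactly the point of the rescaling step in the paper's outline, and your proof needs the analogous move (give the edges of $T$ length comparable to $\Delta$, or rescale $d_T$) before the stated conclusion holds; since the whole purpose of the statement in this paper is the uniform multiplicative constant with additive constant linear in $\Delta$, this cannot be waved away. A smaller inaccuracy in the same spirit: the bottleneck property says the special path $\alpha$ lies in the $\Delta$-neighborhood of every other path, not that geodesics lie in the $\Delta$-neighborhood of $\alpha$; the containment you want is true with a slightly worse constant (project $\alpha$ to the geodesic and note the projections advance in steps of size $O(\Delta)$), but it needs that one-line argument, not a citation of the definition.

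The second problem is the construction of $T$ itself. The relation ``lies within $C\Delta$ in $Q$'' on the set of sampled points is not transitive, so ``the quotient by this relation'' means the quotient by its transitive closure, and chains of identifications can a priori merge samples at different depths or collapse long stretches of a single $\gamma_y$; your no-cycle verification is only asserted, and it is precisely here (together with the claim that tree distance is computed through the split point, i.e.\ that no shortcuts are created by identifications) that the geometric content of the bottleneck property must actually be used. A clean repair is to identify only samples at the same depth $k\Delta$ and to use depth-indexed path components of annular pieces as the vertex set --- at which point you have essentially reconstructed Manning's tree, and you may as well quote his explicit quasi-isometry as the paper does. The fellow-traveling/split-point lemma you propose is correct and provable with constants linear in $\Delta$ (divergence is permanent: if $\gamma_y$ and $\gamma_z$ reconverged, the bottleneck path to the reconvergence point would force all of the earlier portion of $\gamma_y$ into an $O(\Delta)$-neighborhood of $\gamma_z$), so the skeleton is salvageable, but the proof as submitted does not yet establish the theorem as stated.
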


\begin{proof}
This is not exactly the way it is stated in \cite{pseudocharacters},
so we give an outline. We will assume that $\Delta\geq 1$ is an
integer; in general we replace it by the next larger integer and this
doesn't change the conclusion. The idea is to fix $R=20\Delta$ and a
base vertex $*\in Q$, and then define the vertices of $T$ as the path
components of $B(*,R),B(*,2R)-B(*,R),B(*,3R)-B(*,2R),\cdots$. Manning
produces an explicit quasi-isometry $\beta:T\to Q$ satisfying
$$8\Delta d(x,y)-16\Delta\leq d(\beta(x),\beta(y))\leq 26\Delta
d(x,y)$$
and which is $20\Delta$-almost onto. If the metric on $T$ is rescaled
by the factor $8\Delta$, $\beta$ becomes a
$(4,16\Delta)$-quasi-isometric embedding and the standard inverse 
is a $(4,A)$-quasi-isometry for $A$ a linear function of
$\Delta$. 
\end{proof}

In particular, if $[a,b]$ is a
$(2,10\delta+10)$-quasi-geodesic in $Q$, the image in $T$ is contained in
the $\epsilon$-neighborhood of the segment spanned by the images of
$a$ and $b$, and $\epsilon$ is a linear function of $\Delta$.

\subsection{Quasi-axes}

Unfortunately, in general, hyperbolic isometries do not have axes. The
notion of quasi-axes (with uniform constants) will serve as a
surrogate.  Assume $g:X\to X$ is a hyperbolic isometry of a
$\delta$-hyperbolic graph.  Let $x_0\in X$ be a vertex with
$D=d(x_0,g(x_0))$ minimal possible. This implies that
$d(x_0,g^2(x_0))\geq 2D-4\delta-4$ for otherwise a vertex near the
middle of $[x_0,g(x_0)]$ would be displaced less than $D$. In other
words, the piecewise geodesic $[x_0,g(x_0)]\cup [g(x_0),g^2(x_0)]$ is
a $(1,4\delta+4)$-quasi-geodesic of length $2D$. Now recall that local
quasi-geodesics are global quasi-geodesics: a $(100\delta+100)$-local
$(1,4\delta+4)$-quasi-geodesic is a global
$(2,10\delta+10)$-quasi-geodesic (see e.g. \cite[Theorem
  III.1.13]{bridson-haefliger} and \cite[Theorem
  4.1]{CDP}\footnote{\cite{bridson-haefliger} proves this for local
  geodesics and \cite{CDP} proves that local $(L,A)$-quasi-geodesics
  are $(L',A')$-quasi-geodesics; the arguments prove our
  assertion}). We can arrange that $\cup_i [g^i(x_0),g^{i+1}(x_0)]$ is
$g$-invariant, and we refer to it as a {\it quasi-axis} of $g$; it is
a $(2,10\delta+10)$-quasi-geodesic. Any two quasi-axes of $g$ are in
uniformly bounded Hausdorff neighborhoods of each other, and the bound
is a linear function of $\delta$. A {\it virtual quasi-axis} of $g$ is
a quasi-axis of a power of $g$. Thus every hyperbolic isometry has a
virtual quasi-axis.

It is convenient to introduce the following notation: for hyperbolic
isometries $g,h$ of a $\delta$-hyperbolic space $X$ we
write $$\Pi_g(h)=\Pi_g^X(h)\leq\eta$$ if the (nearest point) projection of any
virtual quasi-axis of $h$ to any virtual quasi-axis of $g$ has
diameter $\leq\eta$. Note that for any two pairs of choices of
virtual quasi-axes the diameters of projections differ by a number
bounded by a linear function of $\delta$.

We will also write $$\tilde\Pi_g(h)=\tilde\Pi_g^X(h)\leq\eta$$ if
$\Pi_g(h')\leq\eta$ for every conjugate $h'$ of $h$.

\subsection{WWPD}

Throughout the paper we will be concerned with the following
setting:
\begin{itemize}
\item
$G$ is a group acting on a $\delta$-hyperbolic graph $X$, 
\item $g\in G$ is a hyperbolic element,
\item $C=C(g)<G$ is a subgroup that fixes the points $g^{\pm\infty}$ at
  infinity fixed by $g$; equivalently, for every virtual quasi-axis
  $\ell$ the orbit $C\ell$ is
  contained in a Hausdorff neighborhood of $\ell$ and no
  element of $C$ flips the ends, and 
\item there is $\xi=\xi_g>0$ such that for every $\gamma\in G-C$ we
  have $\Pi_g(g^\gamma)\leq\xi$.
\end{itemize}

When these properties hold we will say that $(G,X,g,C)$ satisfy
$WWPD$. This is a weakening of $WPD$ \cite{bf:gt} which requires in
addition that $C$ be virtually cyclic\footnote{Strictly speaking $WPD$
  allows flips; one could talk about $WWPD^{\pm}$ vs. $WWPD$ but
  we will keep it simple.}. As we shall see below, every mapping
class group has a torsion-free subgroup $\mathcal S$ of finite index
such that {\it every} nontrivial element $g$ admits a WWPD action.
For
example, when $g$ is a pseudo-Anosov mapping class, we may take the
curve graph for $X$ (and then the action is $WPD$), but for a general
element we will use the construction in \cite{bbf} and obtain only
WWPD actions.

It is tempting to think that in this situation there is always a
homomorphism $C\to\R$ given by the (signed) translation length and
then modify $X$ so that $C$ acts by translations on a fixed line (axis
of $g$). However, in general this is not possible since $C\to\R$ is
only a quasi-morphism. This leads to the perhaps surprising fact that
(many) mapping class groups can act by isometries on a
$\delta$-hyperbolic graph with a Dehn twist having positive
translation length, while in general this is impossible on complete
$CAT(0)$ spaces, as observed by 
Martin Bridson \cite{bridson}.

\subsection{Mapping class group and the curve graph}

Let $\S$ be a closed orientable surface, possibly with punctures. The
mapping class group $MCG(\S)$ of $\S$ is the group of orientation
preserving homeomorphisms preserving the set of punctures, modulo
isotopy rel punctures. The curve graph $\mathcal C(\S)$ has a vertex
for every isotopy class of essential simple closed curves in $\S$, and
an edge corresponding to pairs of simple closed curves that intersect
minimally.

It is a fundamental theorem of Masur and Minsky \cite{MM} that the
curve graph is hyperbolic. Moreover, they show that an element $g$
acts hyperbolically if and only if $g$ is pseudo-Anosov, and that the
translation length 
$$\tau_g=\lim\frac{d(x_0,g^n(x_0))}n$$
of $g$ is uniformly bounded below by a positive
constant that depends only on $\S$.

Recall also that to an annulus $A$ one associates the ``curve
graph'' quasi-isometric to a line, whose vertices are represented by
isotopy classes of spanning arcs and edges by disjointness.

It was proved in \cite{bf:gt} that the action of $MCG(\S)$ on the curve
graph $\mathcal C(\S)$ satisfies $WPD$. This means that every
hyperbolic element (i.e. a pseudo-Anosov class) $g$ is a $WPD$ element,
that is, for every $g$ 
there exists $\xi_g$ such that $\Pi_g(g')\leq\xi_g$ for every
conjugate $g'$ of $g$ whose virtual quasi-axes aren't parallel to
those of $g$ and the stabilizer of $g^{\pm\infty}$ is virtually cyclic. 
Bowditch \cite{bhb:tight} improved
this result and showed that the action of $MCG(\S)$ on $\mathcal C(\S)$
is {\it acylindrical}. Denoting by $\delta$ the hyperbolicity constant
of $\mathcal C(\S)$ this means that if $x,y$ are two vertices
sufficiently far apart, say the distance at least $M$, then the set
$$\{h\in MCG(\S)\mid d(x,h(x))\leq 10\delta, d(y,h(y)\leq 10\delta\}$$
is finite and has cardinality bounded by some $N=N(\S)<\infty$. This
allows us to estimate $\xi_g$ as follows.

\begin{lemma}\label{brian}
There are constants $A,B$ that depend only on the surface $\S$ 
so that we may take
$\xi_g=A+B\tau_g$, where $\tau_g$ is the translation length of
$g$. More generally, if $f$ is another hyperbolic element with $\tau_f\leq
\tau_g$ then $\Pi_g(f)\leq\xi_g$
or else $f$ and $g$ have parallel virtual quasi-axes.
\end{lemma}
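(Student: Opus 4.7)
My plan is to prove the contrapositive: assume $f$ is hyperbolic with $\tau_f \le \tau_g$ and that the projection of a virtual quasi-axis $\ell_f$ of $f$ onto a virtual quasi-axis $\ell_g$ of $g$ has diameter $D$ greater than a suitable linear function of $\tau_g$ in constants depending only on $\S$; I will conclude that $\ell_f$ and $\ell_g$ are parallel. First I would note that large projection diameter forces fellow-traveling: by the standard thin-quadrilateral argument in the $\delta$-hyperbolic space $\mathcal{C}(\S)$, $\ell_f$ and $\ell_g$ lie within Hausdorff distance $O(\delta)$ of each other along subsegments of length approximately $D$.

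Next I would construct a long family of approximate near-stabilizers of a reference pair of points. Fix $x, y \in \ell_g$ deep inside the fellow-traveling region with $d(x,y)$ at least a threshold $M_0$ to be specified. For each integer $i$ with $0 \le i \le K$, where $K$ is of order $D/\tau_g$ chosen so that $g^i(x), g^i(y)$ remain in the region, I let $j_i \in \Z$ be the nearest integer to $i\tau_g/\tau_f$, so that $|i\tau_g - j_i\tau_f| \le \tau_f/2 \le \tau_g/2$. Setting $h_i = f^{-j_i}g^i$, the fellow-traveling estimate yields $d(h_iu,u) \le \tau_g/2 + C\delta$ for both $u = x$ and $u = y$. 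The critical observation is distinctness: if $h_i = h_{i'}$ with $i \ne i'$ then $g^{i-i'} = f^{j_i - j_{i'}}$ with both exponents nonzero (since $g$ is hyperbolic, hence of infinite order), which forces $g$ and $f$ to share both fixed points at infinity, i.e.\ $\ell_g$ and $\ell_f$ are parallel; so in the non-parallel case the $K+1$ elements $h_i$ are pairwise distinct.

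Finally I would invoke Bowditch's acylindricity in the quantitative form that for every $\epsilon$ there are constants $M(\epsilon), N(\epsilon)$ with at most $N(\epsilon)$ elements moving a pair at distance $\ge M(\epsilon)$ by at most $\epsilon$, and for $MCG(\S)$ acting on $\mathcal{C}(\S)$ one can take $N(\epsilon) = N(\S)$ a constant and $M(\epsilon)$ linear in $\epsilon$. Applied with $\epsilon = \tau_g/2 + C\delta$ and $M_0 = M(\epsilon) = O(\tau_g)$, this gives $K+1 \le N(\S)$, whence $D \le K\tau_g + 2M_0 + O(\delta) \le A + B\tau_g$ with $A,B$ depending only on $\S$. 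The main obstacle is pinning down this quantitative strengthening with a cardinality bound independent of $\epsilon$; without it, a naive pigeonhole on the residues $\epsilon_i = i\tau_g - j_i\tau_f \in [-\tau_g/2,\tau_g/2]$ only yields a polynomial—not linear—bound on $D$ in terms of $\tau_g$.
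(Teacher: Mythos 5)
Your overall strategy (fellow-traveling plus acylindricity plus a coincidence among near-stabilizers forcing parallel axes) is the right one, but the specific family of near-stabilizers you chose creates a gap that you yourself flag and that cannot be repaired in the form you propose. Setting $h_i=f^{-j_i}g^i$ with $j_i\approx i\tau_g/\tau_f$ forces you to tolerate a displacement of order $\tau_g/2$, and you then need acylindricity with a cardinality bound $N(\epsilon)$ that is \emph{independent} of the displacement $\epsilon$ and a threshold $M(\epsilon)$ linear in $\epsilon$. That strengthening is not available and is in fact false: if $h$ is a pseudo-Anosov with translation length close to the minimum $\epsilon_\S>0$, the powers $h^0,h^1,\dots,h^{\lfloor \epsilon/\epsilon_\S\rfloor}$ all move two far-apart points on a quasi-axis of $h$ by at most $\epsilon+O(\delta)$, so any valid $N(\epsilon)$ must grow at least linearly in $\epsilon$. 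Plugging such an $N(\epsilon)$ into your count gives $K\lesssim \tau_g$ and hence $D\lesssim \tau_g^2$, a quadratic bound, not the linear bound $\xi_g=A+B\tau_g$ claimed in the lemma. The linearity is not cosmetic: it is exactly what lets the paper later replace $g_1$ by a \emph{uniformly} bounded power so that $\tau$ exceeds the projection constant (Remark \ref{bf++++} and Claim 1 in the proof of Theorem \ref{chirality}); with a quadratic bound the required power would grow with $\tau_{g_1}$.

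The paper's proof avoids this by never comparing powers of $f$ and $g$ directly. For the first statement it compares $g$ with its conjugate $\phi=hgh^{-1}$, whose axis is $h(\ell)$ and whose translation length equals $\tau_g$ \emph{exactly}; on the fellow-traveling segment the elements $1,g\phi^{-1},g^2\phi^{-2},\dots,g^{N}\phi^{-N}$ therefore move points only by $O(\delta)$, so Bowditch's acylindricity in its standard form (displacement $10\delta$, constants $M,N$ depending only on $\S$) applies and yields $D\le N\tau_g+M+10\delta+10$, i.e. a bound linear in $\tau_g$ with uniform constants; a coincidence $g^{i-j}=\phi^{i-j}$ then gives parallel axes. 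The second statement, for a general hyperbolic $f$ with $\tau_f\le\tau_g$, is reduced to the first: if the projection of a (virtual quasi-)axis of $f$ to $\ell$ exceeds $\xi_g$ with $B$ increased by $1$, then $f(\ell)$ still projects to $\ell$ with diameter $>\xi_g$ (you lose only about $\tau_f\le\tau_g$ plus $O(\delta)$ when applying $f$), so $\ell$ and $f(\ell)$ are parallel, whence $f$ fixes the endpoints of $\ell$ and its axis is parallel to $\ell$. If you rework your argument with the family $g^i\phi^{-i}$ (orienting, and replacing $\phi$ by $\phi^{-1}$ if the overlap is anti-aligned, as you implicitly do for $f$) and add this reduction step, the proof goes through with only the acylindricity statement actually quoted in the paper.
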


\begin{proof}
First note that after replacing $g$ with a bounded power we may assume
that $g$ has a quasi-axis; this is
because the translation length is bounded below. 
We will prove the lemma assuming $g$ has an axis $\ell$; the case of a
quasi-axis requires straightforward changes. If the projection of
$h(\ell)$ to $\ell$ has diameter $D>10\delta+10$, then there are
segments $I\subset\ell$ and $J\subset h(\ell)$ of length $D-4\delta$
that are in each other's $2\delta$-neighborhood. Denote by
$\phi=hgh^{-1}$, the conjugate of $g$ with axis $h(\ell)$. We will
assume that $I$ and $J$ are oriented in the same direction; otherwise
replace $\phi$ by $\phi^{-1}$. If $D>N\cdot\tau(g)+M+10\delta+10$,
the elements $1,g\phi^{-1},g^2\phi^{-2},\cdots,g^{N}\phi^{-N}$ move
each point of a segment of length $M$ a distance $\leq 10\delta$, so
from acylindricity we deduce $g^i\phi^{-i}=g^j\phi^{-j}$ for some
$i<j$, i.e. $g^{i-j}=\phi^{i-j}$, so in particular $\ell$ and
$h(\ell)$ are parallel.

For the second part, we increase $B$ by 1 and assume that $f$ violates
the conclusion. Then $f(\ell)$ has
projection to $\ell$ of diameter $>\xi_g$, so we must have
that $f(\ell)$ is parallel to $\ell$ and the conclusion follows.  
\end{proof}

The following lemma was proved in \cite{bbf} in the case of closed
surfaces; when $\Sigma$ has punctures the statement is easily reduced
to the closed case by doubling.
(In \cite{bbf} we find a subgroup which acts trivially in $\Z/2$-homology,
but we can further take a finite index subgroup which acts trivially 
in $\Z/3$-homology.)

\begin{prop}\cite[Lemma 4.7]{bbf}\label{BBF}
  There is a finite index normal subgroup $\mathcal
  S\subset MCG(\Sigma)$ which is torsion-free, fixes all punctures,
  acts trivially in $\Z/3$-homology of $\S$, and for every $h\in \mathcal S$
  and every simple closed curve $\alpha$ on $\Sigma$,
  $i(\alpha,h(\alpha))=0$ implies $h(\alpha)=\alpha$.
\end{prop}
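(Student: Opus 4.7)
The plan is to reduce the punctured case to the closed-surface case established in \cite{bbf}, and simultaneously upgrade the homology condition from $\Z/2$ to $\Z/3$. Both pieces are relatively mechanical given the closed-surface statement, which I will take as a black box.

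For the homology upgrade, any finite-index normal subgroup $\mathcal{S}_0\subset MCG(\Sigma)$ satisfying the remaining conclusions can be replaced by its intersection with $K:=\ker(MCG(\Sigma)\to GL(H_1(\Sigma;\Z/3)))$. The result remains finite-index and normal (intersection of two finite-index normal subgroups), inherits the disjointness property by passage to a subgroup, acts trivially on $\Z/3$-homology by construction, and is torsion-free by Serre's classical theorem that the kernel of the $\Z/m$-homology representation in $MCG(\Sigma)$ is torsion-free for every $m\geq 3$.

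For the punctured reduction, I first cap each puncture with a closed disk minus its interior to form a compact surface-with-boundary $\Sigma'$, and then form the closed orientable double $\tilde\Sigma=\Sigma'\cup_\partial\Sigma'$ carrying the canonical involution $\iota$ that exchanges the two halves and fixes the doubling locus pointwise. A mapping class $h\in MCG(\Sigma)$ fixing each puncture admits an $\iota$-equivariant extension $\tilde h\in MCG(\tilde\Sigma)$ preserving each half, well-defined modulo Dehn twists along the doubling locus, giving a homomorphism $\phi:MCG(\Sigma)\to MCG(\tilde\Sigma)/T$ where $T$ is the normal subgroup generated by these Dehn twists. Apply the closed case of \cite{bbf} to $\tilde\Sigma$ to obtain $\tilde{\mathcal{S}}$, and define $\mathcal{S}\subset MCG(\Sigma)$ as the preimage under $\phi$ of the image of $\tilde{\mathcal{S}}$, intersected with $K$ to guarantee the homology condition on $\Sigma$ itself. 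Finite index, normality, puncture-fixing, torsion-freeness, and the $\Z/3$-homology condition transfer routinely.

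The disjointness property is verified by pushing curves through the double: given $h\in\mathcal{S}$ and a simple closed curve $\alpha\subset\Sigma$ with $i(\alpha,h(\alpha))=0$, the curve system $\tilde\alpha=\alpha\cup\iota(\alpha)\subset\tilde\Sigma$ is disjoint from $\tilde h(\tilde\alpha)=h(\alpha)\cup\iota(h(\alpha))$ by $\iota$-equivariance of $\tilde h$. The closed-surface case applied to $\tilde h\in\tilde{\mathcal{S}}$ forces $\tilde h(\tilde\alpha)=\tilde\alpha$ as unoriented isotopy classes, and since $\tilde h$ preserves each half of $\tilde\Sigma$, restriction to the distinguished half gives $h(\alpha)=\alpha$ in $\Sigma$. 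The main obstacle will be the degenerate cases: when $\alpha$ is peripheral to a puncture, $\tilde\alpha$ is isotopic into the doubling locus and one must argue directly using that $h$ fixes the puncture; and one must verify that the isotopy in $\tilde\Sigma$ can be chosen half-preserving, which follows from $\iota$-invariance of the two curve systems by a standard equivariant averaging.
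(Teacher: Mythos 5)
Your overall strategy---take the closed case of \cite{bbf} as a black box, reduce the punctured case by doubling, and intersect with the level-$3$ subgroup to obtain the $\Z/3$-homology condition and torsion-freeness---is exactly the reduction the paper intends (the paper records no more detail than this). But the way you transport $h$ to the double has a genuine gap. You define $\phi:MCG(\Sigma)\to MCG(\tilde\Sigma)/T$ with $T$ ``the normal subgroup generated by'' the Dehn twists along the doubling circles. If $\Sigma$ has at least two punctures, each doubling circle is non-separating in $\tilde\Sigma$ (the two halves stay connected through the other circles), and the normal closure in $MCG(\tilde\Sigma)$ of a twist about a non-separating curve is the whole group (all such twists are conjugate and they generate). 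So your target group is trivial, $\phi$ carries no information, and $\mathcal S=\phi^{-1}(\mathrm{im}\,\tilde{\mathcal S})\cap K$ is just $K$. In particular nothing in your construction places any extension $\tilde h$ in $\tilde{\mathcal S}$, so the pivotal sentence ``the closed-surface case applied to $\tilde h\in\tilde{\mathcal S}$'' is unjustified and the verification of the disjointness property collapses there.

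The repair is standard but has to be said: work inside the subgroup $N<MCG(\tilde\Sigma)$ of classes preserving the doubling multicurve and each half; the subgroup $T$ generated by the twists along the doubling circles (not its normal closure in $MCG(\tilde\Sigma)$) is normal in $N$, the equivariant extension gives a well-defined homomorphism to $N/T$, and you should define $\mathcal S$ by the condition $\tilde h\in\tilde{\mathcal S}\,T$ (a subgroup because $\tilde{\mathcal S}$ is normal). Even then, membership only yields $\tilde h=\tilde s\,t$ with $\tilde s\in\tilde{\mathcal S}$ and $t\in T$, not $\tilde h\in\tilde{\mathcal S}$; you must add the observation that $t$, being supported in annuli about the doubling circles, fixes every curve disjoint from them, so $\tilde h(\alpha)=\tilde s(\alpha)$ up to isotopy, and then apply the closed case to $\tilde s$ and the single curve $\alpha$ (the multicurve $\alpha\cup\iota(\alpha)$ is unnecessary). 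With that amendment the remaining points you flag are fine: a peripheral $\alpha$ is handled by puncture-fixing, and two essential, non-peripheral curves lying in one half and isotopic in $\tilde\Sigma$ are isotopic in that half (e.g.\ take an $\iota$-invariant hyperbolic metric with geodesic doubling circles and use geodesic representatives). Finite index, normality (after intersecting the finitely many conjugates if needed), and torsion-freeness via the level-$3$ kernel go through as you say.
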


When $S\subset\Sigma$ is a $\pi_1$-injective subsurface we denote by
$\hat S$ the surface obtained from $S$ by collapsing each boundary
component to a puncture. Note that every mapping class $f:\S\to\S$
that preserves $S$ induces a mapping class $\hat f:\hat S\to\hat S$.
The following is immediate from a theorem of Ivanov \cite{ivanov}
(see Section \ref{section.example}).

\begin{cor}\label{involution}
Let $f\in \mathcal S$ preserve a subsurface $S\subset\Sigma$. 
If $\hat f$ has finite order in $MCG(\hat S)$ then
$\hat f=id$.
\end{cor}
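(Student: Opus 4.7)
The plan is to apply a theorem of Ivanov \cite{ivanov} asserting that any mapping class of a surface with the curve-fixing property of Proposition \ref{BBF}---namely, $i(\alpha, h(\alpha)) = 0$ implies $h(\alpha) = \alpha$ for every essential simple closed curve $\alpha$---is either the identity or has infinite order (equivalently, the level-$3$ congruence subgroup of $MCG$ is torsion-free, and $\mathcal{S}$ is contained in it). Applied to $\hat{f}$ on $\hat{S}$, this would immediately yield $\hat{f} = id$ since $\hat{f}$ has finite order by hypothesis. It therefore suffices to check that $\hat{f}$ is a well-defined element of $MCG(\hat{S})$ fixing each puncture, and that $\hat{f}$ itself satisfies the curve-fixing property on $\hat{S}$.

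For the first point, I would note that $f$ preserves each boundary component $\alpha$ of $S$: since $f(S) = S$, the image $f(\alpha)$ is another boundary component of $S$, hence disjoint from $\alpha$; so $i(\alpha, f(\alpha)) = 0$, and Proposition \ref{BBF} forces $f(\alpha) = \alpha$. Consequently $\hat{f}$ fixes each puncture of $\hat{S}$ arising from a boundary component of $S$ (punctures coming from punctures of $\Sigma$ inside $S$ are fixed automatically since $\mathcal{S}$ fixes all punctures of $\Sigma$). For the curve-fixing property on $\hat{S}$: given an essential simple closed curve $\hat{\alpha}$ in $\hat{S}$ with $i(\hat{\alpha}, \hat{f}(\hat{\alpha})) = 0$, regard $\hat{\alpha}$ as a simple closed curve in $S \subset \Sigma$; the same intersection equality holds in $\Sigma$, so Proposition \ref{BBF} applied to $f$ in $\Sigma$ gives $f(\hat{\alpha}) = \hat{\alpha}$ as isotopy classes in $\Sigma$. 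Because $S$ is $\pi_1$-injective in $\Sigma$, essential curves in $S$ that are isotopic in $\Sigma$ are already isotopic inside $S$, hence inside $\hat{S}$; therefore $\hat{f}(\hat{\alpha}) = \hat{\alpha}$ in $\hat{S}$.

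The main obstacle is really just quoting Ivanov's theorem in the appropriate form; once this is in hand, the verification of its hypotheses for $\hat{f}$ is routine bookkeeping using the strong curve-fixing property of $\mathcal{S}$ from Proposition \ref{BBF} together with the $\pi_1$-injectivity of $S$ in $\Sigma$.
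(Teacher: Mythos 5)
Your bookkeeping steps (each boundary component of $S$ is $f$-invariant by Proposition \ref{BBF}, punctures are fixed, and $\hat f$ inherits the curve-fixing property on $\hat S$) are essentially fine, but the theorem you then quote does not exist in the form you state it, and this is a genuine gap. It is false that a mapping class $h$ with the property ``$i(\alpha,h(\alpha))=0$ implies $h(\alpha)=\alpha$'' must be trivial or of infinite order: the hyperelliptic involution of a closed genus two surface and the elliptic involution of a once-punctured torus fix the isotopy class of \emph{every} essential simple closed curve (and the puncture) and have order two. Since $\hat S$ can perfectly well be a once-punctured torus, verifying the curve-fixing property for $\hat f$ does not exclude such an involution, so the final step of your argument collapses. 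Your parenthetical ``equivalent'' formulation is not available either: torsion-freeness of the level-$3$ subgroup applies to $f$ inside $MCG(\Sigma)$, not to $\hat f$ inside $MCG(\hat S)$. To invoke it for $\hat f$ you would need $\hat f$ to act trivially on $H_1(\hat S;\Z_3)$, and this does not follow formally from triviality of $f_*$ on $H_1(\Sigma;\Z_3)$: the map $H_1(S;\Z_3)\to H_1(\Sigma;\Z_3)$ need not be injective (its kernel is spanned by boundary classes, e.g. boundary components of $S$ that separate $\Sigma$), so what comes for free is only that $\hat f_*$ is unipotent, not trivial.

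Ruling out nontrivial finite-order reductions is exactly the content of the theorem of Ivanov that the paper actually uses (see Section \ref{section.example}): every element of the level-$3$ subgroup, hence every $f\in\mathcal S$, is pure, and its reduction to an invariant subsurface, after collapsing boundary to punctures, is the identity or pseudo-Anosov on each piece. From that statement Corollary \ref{involution} is immediate, since pseudo-Anosov classes have infinite order; the essential hypothesis is the $\Z_3$-homology condition built into $\mathcal S$, not merely the curve-fixing property of Proposition \ref{BBF}, and your proposal never uses the homological condition in a way that could substitute for it. A smaller point: your claim that essential curves in $S$ which are isotopic in $\Sigma$ are isotopic in $S$ needs the curves to be non-peripheral in $S$ (it fails for boundary-parallel curves when two boundary components of $S$ are isotopic in $\Sigma$); since $\hat\alpha$ is essential in $\hat S$ this is repairable by an innermost-annulus argument, but it should be justified rather than attributed to $\pi_1$-injectivity alone.
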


\subsection{The projection complex}

Recall \cite{mm2} that when $S,S'$ are $\pi_1$-injective subsurfaces
of $\S$ with $\partial S\cap\partial S'\neq\emptyset$ there is a
coarse {\it subsurface projection} $\pi_S(S')\subset\mathcal C(S)$, a
uniformly bounded subset of the curve complex $\mathcal C(S)$ obtained
by closing up each component of $\partial S'\cap S$ along $\partial
S$. Let $\bY$ be an $\mathcal S$-orbit of subsurfaces of $\S$, where
$\mathcal S\subset MCG(\S)$ is the subgroup as in Proposition
\ref{BBF}. Then distinct subsurfaces in $\bY$ have intersecting
boundaries (see \cite{bbf}) and the following two properties below
hold (for the first see \cite{behrstock}, and for a simple proof due
to Leininger see \cite{mangahas,mangahas2}). When $A,B,C\in\bY$ define
$d_A^\pi(B,C)=\diam\{\pi_A(B)\cup\pi_A(C)\}$. Then there is $\eta>0$
such that
\begin{itemize}
\item of the three numbers $d^\pi_A(B,C),d^\pi_B(A,C),d^\pi_C(A,B)$ at
  most one is larger than $\eta$, and
\item for every $A,B\in\bY$ the set $\{C\in\bY\mid
  d^\pi_C(A,B)>\eta\}$ is finite.
\end{itemize}

Section 3 of \cite{bbf} proves the following theorem.

\begin{prop}\label{bY}
Let $\bY$ be an $\mathcal S$-orbit of subsurfaces of $\Sigma$. Then
$\mathcal S$ acts by isometries on a hyperbolic graph $\cC(\bY)$ with
the following properties:
\begin{enumerate}[(i)]
\item For every surface $S\in\bY$ the curve graph $\mathcal C(S)$ is
  embedded isometrically as a convex subgraph in $\cC(\bY)$, and when $S\neq
  S'$ then $\mathcal C(S)$ and $\mathcal C(S')$ are disjoint.
\item The inclusion $$\bigsqcup_{S\in\bY} \mathcal C(S)\hookrightarrow
  \cC(\bY)$$ is $\mathcal S$-equivariant, where on the left $\phi\in\mathcal S$
  sends a curve $\alpha\in \mathcal C(S)$ to the curve
  $\phi(\alpha)\in \mathcal C(\phi(S))$.
\item For $S\neq S'$ the nearest point projection to $\mathcal C(S')$
  sends $\mathcal C(S)$ to a uniformly bounded set, and this set is
  within uniformly bounded distance from $\pi_{S'}(S)$.
\item Assume $g\in \mathcal S$ is pure, i.e. supported on $S\in\bY$
  and the restriction is pseudo-Anosov or, in case $S$ is an annulus,
  a power of a Dehn twist. Denote by $C$ the subgroup of $\mathcal S$
  consisting of elements $f$ that leave $S$ invariant and, if $S$ is
  not an annulus, $\hat f:\hat S\to\hat S$ preserves the stable and
  unstable foliations of $\hat g$. Then $(\mathcal S,\cC(\bY),g,C)$ satisfies
  $WWPD$.
\end{enumerate}
\end{prop}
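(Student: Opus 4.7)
My plan is to invoke the quasi-tree of metric spaces machinery from \cite{bbf}, with the projection complex on $\bY$ produced from the subsurface-projection data as the underlying tree-like structure, and the individual curve graphs $\mathcal C(S)$ blown up at each vertex. The two bulleted properties stated just before the proposition (Behrstock's inequality and the finiteness statement) are precisely the projection axioms needed as input in \cite{bbf}. The output is a connected graph $\cC(\bY)$ on which $\mathcal S$ acts by isometries; its hyperbolicity follows from the main combination theorem of \cite{bbf}, using that each vertex space $\mathcal C(S)$ is uniformly hyperbolic by Masur--Minsky. Properties (i), (ii), (iii) are essentially built into the construction: each $\mathcal C(S)$ embeds as a convex subgraph, distinct copies are disjoint, the embedding is $\mathcal S$-equivariant, and the nearest-point projection $\mathcal C(S)\to\mathcal C(S')$ in $\cC(\bY)$ agrees with the subsurface projection $\pi_{S'}(S)$ up to a uniform additive constant.

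For (iv), fix a pure $g\in\mathcal S$ supported on $S\in\bY$, and let $C$ be the subgroup as in the statement. By Masur--Minsky the induced class $\hat g\in MCG(\hat S)$ acts hyperbolically on $\mathcal C(S)=\mathcal C(\hat S)$ with uniform positive translation length (the annular case is trivially a translation of the quasi-line $\mathcal C(S)$). A quasi-axis $\ell_g$ of $g$ may therefore be chosen inside $\mathcal C(S)$, and by convexity (i) this is also a quasi-axis of $g$ acting on $\cC(\bY)$. To see that $C$ fixes $g^{\pm\infty}$ without flipping, note that if $f\in C$ then $\hat f$ preserves the stable/unstable foliations of $\hat g$, hence preserves $\ell_g$ up to bounded Hausdorff distance; an end-flipping $f$ would force $\hat f^2$ to act with bounded orbits on $\ell_g$ while still preserving the foliations, and Corollary~\ref{involution} combined with the torsion-freeness of $\mathcal S$ rules this out.

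For the projection bound, take $\gamma\in\mathcal S-C$. The conjugate $g^\gamma$ is pure and supported on $\gamma(S)$, so its quasi-axis lies in $\mathcal C(\gamma(S))$. If $\gamma(S)\neq S$, property (iii) immediately gives a uniform bound $\Pi_g(g^\gamma)\leq\xi$, independent of $\gamma$. If $\gamma(S)=S$ but $\gamma\notin C$, then $\hat\gamma$ stabilizes $S$ but does not preserve the foliation pair of $\hat g$; Lemma~\ref{brian}, applied to the curve graph of $\hat S$ (where $\hat g$ is pseudo-Anosov), gives a uniform bound on the projection of the quasi-axis of $\widehat{g^\gamma}=\hat\gamma\hat g\hat\gamma^{-1}$ onto that of $\hat g$, in terms of $\tau_{\hat g}$ and the surface $\hat S$. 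Convexity of $\mathcal C(S)\subset\cC(\bY)$ transfers this bound to the projection in $\cC(\bY)$. In the annular case any two distinct conjugates supported on $S$ would have to come from $\gamma\in C$, so nothing needs to be checked.

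The main obstacle is the input from \cite{bbf}: proving that the blown-up projection complex is hyperbolic, and that nearest-point projection between the blown-up pieces coincides (up to bounded error) with the subsurface projection. Once this is available, the WWPD verification is a clean two-case analysis driven by whether a given conjugator moves the support surface $S$. The most delicate bookkeeping is in the case $\gamma(S)=S$, where one must carefully separate the normalizer of $\langle g\rangle$ (which lies in $C$) from other stabilizers of $S$, and this is where the acylindricity of Bowditch, encoded in Lemma~\ref{brian}, and the genericity of $\mathcal S$ provided by Proposition~\ref{BBF} and Corollary~\ref{involution}, are essential.
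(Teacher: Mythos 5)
Your proposal is correct and follows essentially the same route as the paper: cite the construction and the hyperbolicity/convexity/projection statements of \cite{bbf} for (i)--(iii), then verify WWPD by the two-case analysis (support moved off $S$ handled by (iii), support equal to $S$ handled by acylindricity via Lemma \ref{brian}), with Corollary \ref{involution} ruling out flips. The only step to phrase more carefully is the case $\gamma(S)=S$, $\gamma\notin C$: Lemma \ref{brian} gives boundedness \emph{or} parallel quasi-axes, and the parallel alternative (i.e.\ $\hat\gamma$ swapping the stable and unstable foliations, hence inducing a finite-order class) is exactly what Corollary \ref{involution} excludes, just as in the paper's proof.
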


\begin{proof}
The graph $\cC(\bY)$ is constructed in Section 3.1 of \cite{bbf} from
which it is clear that (ii) holds.
Hyperbolicity is Theorem 3.15, convexity is Lemma 3.1 and (iii)
is Lemma 3.11.
For (iv) use Corollary \ref{involution} to see that there
are no flips. If $g'$ is conjugate to $g$ and its virtual quasi-axis
is contained in $\mathcal C(S)$, then the projection to a quasi-axis $\ell$
of $g$ is uniformly bounded (or the two are parallel) by the acylindricity
of $\mathcal C(S)$. If the virtual quasi-axis of $g'$ is contained in
some other $\mathcal C(S')$ the projection to $\ell$ is bounded by the
third bullet.
\end{proof}

\begin{lemma}\label{bounded.projection}
Let $\phi\in\mathcal S$ be supported on a subsurface
$F$ so that $\phi|F$ is pseudo-Anosov (or a Dehn twist if $F$ is an
annulus). Also suppose that $F$ does not contain any $S\in\bY$. Then 
for each $S \in \bY$, there exists
a vertex in $\cC(\bY)$ such that the 
nearest point projection to $\cC(S)$ of its $\phi$-orbit is uniformly
bounded (independently of $F$, $\phi$ and $S$; the bound depends only
on $\Sigma$).

Thus, if $\phi$ is hyperbolic in $\cC(\bY)$, its virtual quasi-axis
can intersect $\cC(S)$
only in a uniformly bounded length segment.
\end{lemma}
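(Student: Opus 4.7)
The plan is, for each $S\in\bY$, to exhibit a vertex $v\in\cC(\bY)$ whose $\phi$-orbit has uniformly bounded projection to the convex subgraph $\cC(S)\subset\cC(\bY)$. The cleanest strategy is to pick $v$ to be a simple closed curve disjoint from $F$ that represents a vertex of $\cC(S')$ for some $S'\in\bY$: since $\phi$ is supported on $F$, it fixes the isotopy class of such $v$, so the orbit collapses to $\{v\}$, and its nearest point projection to $\cC(S)$ is either the singleton $\{v\}$ (when $S'=S$) or a set within uniformly bounded distance of $\pi_S(S')$ by Proposition~\ref{bY}(iii). The hypothesis that $F$ does not contain any element of $\bY$ guarantees the existence of such $v$ in essentially all situations: when $F\cap S$ does not fill $S$ we take $v\in\cC(S)$ lying in $S\setminus F$; when it does fill $S$ but some other $S''\in\bY$ admits a curve disjoint from $F$, we take $v\in\cC(S'')$ and use Proposition~\ref{bY}(iii) to transfer the bound.

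The genuine work occurs in the degenerate regime where $F\cap S'$ fills $S'$ for every $S'\in\bY$. Here I would take $v\in\cC(S)$ with $\pi_F(v)$ close to $\pi_F(S)$ (for instance a curve sitting near $\partial S$) and apply Behrstock's inequality to the overlapping pair $S,F$: for each $n$, $\min(d^\pi_S(F,\phi^n v),d^\pi_F(S,\phi^n v))\le M$ where $M$ depends only on $\Sigma$. Because $\phi|_F$ is pseudo-Anosov (or a Dehn twist if $F$ is an annulus), it acts on $\cC(F)$ with translation length bounded below by a universal $\tau_0(\Sigma)>0$ (Masur--Minsky), so $d^\pi_F(S,\phi^n v)$ grows linearly in $|n|$ and exceeds $M$ once $|n|\ge N_0$ for a uniform $N_0$. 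For those $n$ Behrstock then forces $d^\pi_S(F,\phi^n v)\le M$, placing $\pi_S(\phi^n v)$ within $M$ of the uniformly bounded set $\pi_S(F)$. The main obstacle is handling the finitely many $|n|<N_0$ exceptions uniformly; since $N_0$ is a universal constant this reduces to a single-step displacement bound, which I would secure by the choice of $v$ close to $\partial S$ so that every $\phi^n v$ still projects near $\pi_S(F)$.

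For the second assertion, suppose $\phi$ is hyperbolic in $\cC(\bY)$ with virtual quasi-axis $\ell$. Then $\ell$ lies in a uniform Hausdorff neighborhood of the $\phi$-orbit of the vertex $v$ produced above. By the convexity of $\cC(S)$ in $\cC(\bY)$ (Proposition~\ref{bY}(i)), any subsegment of $\ell$ contained in $\cC(S)$ of length exceeding a uniform constant would project the portion of the orbit fellow-travelling it onto a correspondingly long segment of $\cC(S)$, contradicting the boundedness of the projection established in the first part. Hence $\ell\cap\cC(S)$ has length bounded by a constant depending only on $\Sigma$.
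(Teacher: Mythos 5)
Your easy case is fine: if some $S'\in\bY$ contains an essential curve disjoint from $F$, that curve is fixed by $\phi$ and Proposition~\ref{bY}(iii) gives the bound (this parallels the paper's case $S\cap F=\emptyset$, where $\phi$ fixes $\cC(S)$ pointwise). The gaps are exactly where the content lies. In your degenerate regime, Behrstock bounds $d^\pi_S(\partial F,\phi^n v)$, i.e.\ the subsurface projection in $\Sigma$ of the moving curve; but $\phi^n v$ is a vertex of the fiber $\cC(\phi^n S)$, and when $\phi^n S\neq S$ its nearest point projection in $\cC(\bY)$ to the convex subgraph $\cC(S)$ is, by Proposition~\ref{bY}(iii), coarsely $\pi_S(\phi^n S)=\pi_S(\phi^n\partial S)$, which is in general not close to $\pi_S(\phi^n v)$ precisely when $F\cap S$ fills $S$. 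So you bound the wrong quantity; the repair is to run the same Behrstock/translation-length argument on $\partial S$ rather than on $v$. More seriously, your treatment of the exceptional range $|n|<N_0$ is not an argument: choosing $v$ ``near $\partial S$'' does not force $\pi_S(\phi^n v)$ (let alone $\pi_S(\phi^n\partial S)$) to stay near $\pi_S(\partial F)$, because the usual ``contained in a neighborhood of $(F\cap S)\cup\partial S$, hence close to $\pi_S(\partial F)$'' trick fails exactly in the filling regime you are in, and Behrstock is silent for small $n$ since $d^\pi_F(\partial S,\phi^n\partial S)$ may be below the Behrstock constant. The paper's own route sidesteps this by working with $\partial F$ itself, which is $\phi$-invariant, so that the relevant projection to $S$ is literally constant in $n$, and then invoking Proposition~\ref{bY}(iii).

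The second assertion is deduced from a false statement: a virtual quasi-axis $\ell$ of $\phi$ is not contained in a uniform Hausdorff neighborhood of the $\phi$-orbit of your chosen vertex $v$. The Hausdorff distance between $\ell$ and the orbit of $v$ is of the order of $d(v,\ell)$, which nothing controls (in your ``cleanest'' case the orbit of $v$ is a single point), so your contradiction only produces a bound depending on $d(v,\ell)$, not a uniform one. The missing ingredient, which is what the paper uses, is the elementary observation that the $\phi$-orbit of a point lying on the quasi-axis has coarsely minimal projection diameter to $\cC(S)$ among all $\phi$-orbits; combined with the first part (applied to the special vertex) this bounds the projection of $\ell$ itself, and only then does convexity of $\cC(S)$ bound the overlap of $\ell$ with $\cC(S)$.
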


\begin{proof}
If $S\cap F=\emptyset$ then 
$\phi$ fixes $\cC(S)$ pointwise ($\phi$ is elliptic) and the claim is clear.
Otherwise $\partial F\cap S\neq\emptyset$ and all $\phi^i(\partial
F)$, $i\in\Z$,
have the same projection to $S$ in $\Sigma$. 
The first part of the lemma follows from
Proposition \ref{bY}(iii).

Now, by an elementary argument in $\delta$-hyperbolic geometry,
if $\phi$ is hyperbolic and has a virtual quasi-axis, then the $\phi$-orbit
of a point on it has the smallest projection (in diameter) to $\cC(S)$  
among the $\phi$-orbits of points (up to a constant depending on $\delta$).
Therefore the last part of the claim follows. 
\end{proof}

\subsection{Promoting hyperbolic spaces to quasi-trees}

In this section we promote a $WWPD$ action $(G,X,g,C)$ with $X$ a
$\delta$-hyperbolic graph to a $WWPD$
action $(G,Q,g,C)$ where $Q$ is a quasi-tree.

\begin{prop}\label{promotion}
Let $X$ be a $\delta$-hyperbolic graph and assume $(G,X,g,C)$
satisfies $WWPD$ with the constant $\xi=\xi_g^X$. Then there is an
action of $G$ on a quasi-tree $Q$ such that:
\begin{enumerate}[(i)]
\item The bottleneck constant $\Delta=\Delta(\delta,\xi)$ for $Q$
  depends only on $\delta$ and $\xi=\xi^X_g$, and it is bounded by a
  multiple of $\delta+\xi+1$,
\item $(G,Q,g,C)$ satisfies $WWPD$ with $\xi_g^Q$ bounded by a multiple
  of $\delta+\xi+1$, 
\item if $h\in G$ is elliptic on $X$ then $h$ is elliptic on $Q$,
\item if $h\in G$ is hyperbolic on $X$ and if
  $\tilde\Pi^X_g(h)\leq\eta$ then either $h$ is elliptic on $Q$, or $h$
  is hyperbolic on $Q$ and $\tilde\Pi^Q_g(h)\leq\eta+P$ for
  some constant $P=P(\delta,\xi)$ which is a fixed multiple of
  $\delta+\xi+1$. 
\end{enumerate}
\end{prop}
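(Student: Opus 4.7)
The idea is to apply the projection complex machinery of \cite{bbf} (as used in Proposition \ref{bY}) to the $G$-orbit of a single virtual quasi-axis of $g$. Fix a virtual quasi-axis $\ell$ of $g$ and set $\bY=\{\gamma\ell:\gamma\in G\}/\!\sim$, where $\gamma\ell\sim\gamma'\ell$ iff they lie in bounded Hausdorff neighborhoods of each other in $X$. By WWPD (and the defining property of $C$) this equivalence is exactly $\gamma^{-1}\gamma'\in C$, so $\bY$ is $G$-equivariantly identified with $G/C$. For distinct $A,Y\in\bY$ let $\pi_Y(A)\subset Y$ be the $X$-nearest-point projection of any representative of $A$ onto $Y$; WWPD gives $\diam\pi_Y(A)\leq\xi+O(\delta)$. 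The Behrstock inequality and local finiteness axioms required by \cite{bbf} for these projections follow from standard hyperbolic geometry of triples of quasi-axes in $X$, so the resulting projection constant $\theta$ is linear in $\delta+\xi+1$.

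\textbf{Construction and properties (i), (ii).} Feeding $\bY$ into the \cite{bbf} construction produces a quasi-tree $Q=\cC(\bY)$ whose bottleneck constant is linear in $\theta$, and hence in $\delta+\xi+1$, giving (i). The group $G$ acts by isometries on $Q$, each $Y\in\bY$ embeds as a convex quasi-line $\cC(Y)\subset Q$, and distinct $\cC(Y)$'s are disjoint. For (ii), $g$ preserves $\ell\in\bY$ and acts hyperbolically along $\cC(\ell)\subset Q$, which serves as its $Q$-virtual quasi-axis; the stabilizer of the two endpoints in $Q$ is still exactly $C$; and for $\gamma\notin C$ the nearest-point projection of $\cC(\gamma\ell)$ onto $\cC(\ell)$ in $Q$ is uniformly bounded (linearly in $\delta+\xi+1$) by the structure of the projection complex. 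Hence $\xi_g^Q=O(\delta+\xi+1)$.

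\textbf{Properties (iii), (iv), and the main obstacle.} For (iii), suppose $h$ is $X$-elliptic. Identifying points of $\cC(Y)\subset Q$ with points of $Y\subset X$ via the convex embedding, the bounded $X$-orbit of any $y\in\cC(Y)$ stays at bounded $X$-distance from $y$; using the explicit description of the $Q$-metric in terms of the projections $\pi_{Y'}$, one shows that the $Q$-orbit of $y$ is also bounded, so $h$ is $Q$-elliptic. For (iv), if $h$ is $X$-hyperbolic with $\tilde\Pi_g^X(h)\leq\eta$, either a power of $h$ stabilizes some $Y\in\bY$ (and the $Q$-dynamics of $h$ is governed by its action on the quasi-line $\cC(Y)$) or $h$ permutes $\bY$ non-trivially and acts $Q$-hyperbolically with virtual quasi-axis crossing infinitely many $\cC(Y)$'s. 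In either case, a virtual quasi-axis of any conjugate of $h$ in $Q$ projects onto $\cC(\ell)$, and this $Q$-projection is controlled by the corresponding $X$-projection onto $\ell$ plus an additive constant $P=O(\delta+\xi+1)$ from the projection-complex construction, yielding $\tilde\Pi_g^Q(h)\leq\eta+P$.

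\textbf{Main obstacle.} The most delicate step is (iv): converting the $X$-projection bound into the $Q$-projection bound requires carefully tracking how a $Q$-quasi-axis of $h$ decomposes into arcs in the various $\cC(Y)$'s and how each such arc projects into $\cC(\ell)$ in terms of the $X$-projection data. The analogous bookkeeping for (iii) when $h$ permutes $\bY$ with infinite orbit is more routine but still requires genuine use of the explicit \cite{bbf} description of $d_Q$ rather than just $X$-displacement of a basepoint.
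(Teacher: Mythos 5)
Your proposal follows essentially the same route as the paper: feed the parallelism classes of (virtual) quasi-axes of conjugates of $g$ (your $G/C$-indexed family of quasi-lines) into the projection complex construction of \cite{bbf}, observe that the WWPD constant $\xi$ makes the projection axioms hold with constants linear in $\delta+\xi+1$, and deduce (i)--(iii) from the construction and (ii),(iv) from the comparison of projections measured in $Q$ versus in $X$ (Lemma 3.11 of \cite{bbf}). The ``main obstacle'' you flag in (iv) is handled in the paper without arc-by-arc bookkeeping along the $Q$-quasi-axis: if $h$ had long overlap with some $\cC(Y)$ in $Q$, then $h^{-N}(Y)$ and $h^{N}(Y)$ would have large projection to $Y$ measured in $Q$, hence in $X$ by Lemma 3.11, forcing a virtual quasi-axis of $h$ in $X$ to have large projection to $Y$, contrary to $\tilde\Pi^X_g(h)\leq\eta$.
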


\begin{proof} This is also a special case of the construction in
  \cite{bbf}.  Consider the conjugates of $g$, and say two are
  equivalent if they have parallel quasi-axes. For each equivalence
  class take the union of all quasi-axes of all of its members with
  the subspace metric -- this is a quasi-line. The collection $\bY$ of
  all these quasi-lines satisfies the axioms in Section 3 of
  \cite{bbf} since by assumption the projections are uniformly
  bounded. The space $\cC(\bY)$ constructed there is a quasi-tree by
  Theorem 3.10 of \cite{bbf}, and we name it $Q$. 
$\cC(\bY)$ contains the quasi-line for each $Y$.
The main observation
  for the proof of (i) is that the constant $K$ used in the definition
  of the projection complex depends only on $\delta$ and $\xi$ and the
  dependence is linear. Then (ii) follows from Lemma 3.11 of
  \cite{bbf} and (iii) is clear from the construction.

  Suppose $h$ is hyperbolic
  in $Q$ and has long overlap with $\mathcal C(Y)$, where $Y$ is one of the
  quasi-lines. Then $h^{-N}(Y)$ and $h^N(Y)$ have large projection in $Y$
  measured in $Q$, hence also in $X$ (this again uses Lemma 3.11 in
  \cite{bbf}). But then a virtual quasi-axis of $h$ has large
  projection to $Y$ in $X$.
\end{proof}

\section{Construction of quasi-morphisms}

In this section we show how to construct quasi-morphisms $G\to\R$ if
$(G,Q,g,C)$ satisfies $WWPD$ and $Q$ is a quasi-tree, generalizing the
Brooks construction.

\begin{prop}\label{bf++}
For every $\Delta$ there is $M=M(\Delta)$, a fixed multiple of
$\Delta+1$, such that the following
holds. Let $(G,Q,g,C)$ satisfy $WWPD$
where $Q$ is a quasi-tree with bottleneck constant $\Delta$ and assume
$\tau_g\geq
\xi_g+M$. Then there is a quasi-morphism $F:G\to\R$ such that
\begin{enumerate}[(a)]
\item the defect of $F$ is $\leq 12$,
\item $F$ is unbounded on the powers of $g$; more precisely, 
$$\hat F(g)\geq \frac 12$$
where $\hat F$ is the homogeneous quasi-morphism equivalent to $F$,
and moreover if $h$ is hyperbolic with virtual quasi-axes parallel to
those of $g$ and both $g,h$ translating in the same direction then 
$$\frac {\hat F(h)}{\tau_h}=\frac {\hat F(g)}{\tau_g}$$
and in particular $\hat F(h)\geq \frac{\tau_h}{2\tau_g}$,
\item $F$ is bounded on the powers of any elliptic element of $G$, and
\item $F$ is bounded on
  the powers of any hyperbolic element $\alpha$ such that 
$\tilde\Pi_g(\alpha)\leq\tau_g-M$.
\end{enumerate}
\end{prop}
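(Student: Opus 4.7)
The plan is to adapt the Brooks and Epstein--Fujiwara counting construction to the quasi-tree setting, using the $WWPD$ hypothesis to exclude unwanted reverse copies and long fellow-traveling by conjugates of $g$. Manning's theorem provides a tree $T$ and a $(4,A)$-quasi-isometry $\pi:Q\to T$ with $A$ linear in $\Delta$, so any two $(2,10\delta+10)$-quasi-geodesics in $Q$ with matching endpoints lie at bounded Hausdorff distance. Fix a vertex $x_0$ on a quasi-axis $\ell$ of $g$ and let $w\subset\ell$ be the oriented subsegment from $x_0$ to $g(x_0)$, of length $\tau_g$. For $h\in G$ choose a $(2,10\delta+10)$-quasi-geodesic $[x_0,h(x_0)]$ and let $f_w(h)$ be the largest number of pairwise disjoint oriented subsegments of this quasi-geodesic that are $G$-translates of $w$; define $f_{w^{-1}}(h)$ analogously for reverse copies and set
$$F(h)=f_w(h)-f_{w^{-1}}(h).$$
The bottleneck property shows $F$ is well-defined up to a universally bounded additive error.

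For (a), given $x,y\in G$ apply the tripod comparison in $T$ to $x_0$, $x\cdot x_0$, $xy\cdot x_0$: each disjoint copy of $w$ on $[x_0,xy(x_0)]$ either sits inside $[x_0,x(x_0)]$, inside $[x(x_0),xy(x_0)]$, or straddles the median of the tripod, and only a bounded number can straddle. Keeping separate books for $f_w$ and $f_{w^{-1}}$ and absorbing the bottleneck error yields defect $\leq 12$ once $M=M(\Delta)$ is chosen large enough for the counting to be rigid. For (b), the segment $[x_0,g^n(x_0)]$ contains the disjoint copies $w,g(w),\ldots,g^{n-1}(w)$, so $f_w(g^n)\geq n$. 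A reverse copy $\gamma(w^{-1})$ on the same quasi-geodesic would force $\gamma(\ell)$ to fellow-travel $\ell$ in the opposite orientation for length $\tau_g>\xi_g$; $WWPD$ then forces $\gamma\in C$, contradicting the fact that $C$ fixes both ends of $\ell$ and hence preserves its orientation. Thus $f_{w^{-1}}(g^n)$ is uniformly bounded and $\hat F(g)\geq\tfrac12$. For a hyperbolic $h$ with quasi-axis parallel to $\ell$ and translating in the same direction, $[x_0,h^n(x_0)]$ fellow-travels $\ell$ for total length $\approx n\tau_h$, giving $\approx n\tau_h/\tau_g$ disjoint copies of $w$ and the ratio identity $\hat F(h)/\tau_h=\hat F(g)/\tau_g$.

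For (c), elliptic elements have bounded orbits and therefore bounded counts. For (d), the hypothesis $\tilde\Pi_g(\alpha)\leq\tau_g-M$ means every $G$-translate of $\ell$ projects to any virtual quasi-axis of $\alpha$ with diameter $\leq\tau_g-M$. A putative copy of $w$ along the $\alpha$-quasi-axis would require fellow-traveling some $\gamma(\ell)$ for length $\tau_g$; but fellow-traveling of length $L$ forces projection diameter at least $L-P(\delta,\xi)$, so once $M$ exceeds this additive constant $P$ no such copy can exist. Hence neither $w$ nor $w^{-1}$ appears along the $\alpha$-quasi-axis, and $F(\alpha^n)$ is uniformly bounded.

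The main obstacle is the simultaneous calibration of $M=M(\Delta)$: it must be large enough that the tripod comparison in $T$ yields the claimed defect of $12$ in (a) while also exceeding the additive constant relating fellow-traveling length to projection diameter in (d), so as to rule out every spurious copy of $w$ on the $\alpha$-quasi-axis. Once $M$ is fixed, all remaining verifications reduce to routine bottleneck and tripod estimates, together with a careful accounting of how pieces of $w$ can appear near the median of a tripod in $T$.
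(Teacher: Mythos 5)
Your proposal is correct and follows essentially the same route as the paper: a Brooks-type non-overlapping count of oriented copies of a fundamental segment $w$ of a quasi-axis of $g$, measured after projecting to Manning's tree, with $F=f_w-f_{w^{-1}}$, the tripod argument giving defect $\leq 12$, the $WWPD$ condition (projection $>\xi_g$ forces membership in $C$, which cannot reverse the ends) ruling out reversed copies for (b), bounded orbits for (c), and the projection-versus-fellow-traveling estimate for (d). The only cosmetic difference is that the paper counts copies at even powers $g^{2k}(w)$ to guarantee non-overlap of tree images (still yielding $\hat F(g)\geq\tfrac12$), whereas you count consecutive translates; this does not affect the argument.
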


\begin{proof}
The proof is a modification of the classical Brooks construction for
free groups \cite{brooks}. There are two variants, one counts the
number of {\it all} subwords of a given word isomorphic to a fixed
word $w$, and the other counts the maximal number of {\it
  non-overlapping} subwords isomorphic to $w$. The first version is
more convenient when working with coefficients (we worked this out in
the $WPD$ setting in \cite{bbf2}). The second version is more
convenient when control on the defect is important, and this is the
version we pursue here.

We start by fixing an $(4,A)$-quasi-isometry $\phi:Q\to T$ to a tree
$T$ and a constant $\epsilon\geq 0$ so that the $\phi$-image of a
$(2,10\delta+10)$-quasi-geodesic $[a,b]$ is in the
$\epsilon$-neighborhood of $[\phi(a),\phi(b)]$. Note that $A,\epsilon$
depend only on $\Delta$ and can be arranged to be fixed multiples
of $\Delta+1$, see Secton \ref{s:quasi-trees}.  Let $x_0$ be a vertex
with $D=d(x_0,g(x_0))$ minimal possible and let $w=[x_0,g(x_0)]$,
viewed as an oriented segment. By taking $M$ sufficiently large we may
assume that $D>>\delta,A,\epsilon$ and the union of $\<g\>$-translates of
$w$ forms a quasi-axis $\ell$ of $g$.  From this data we will
construct a quasi-morphism $F=F_{\phi,\epsilon,w}:G\to\R$.

A {\it copy} of $w$ is a translate $\gamma w$, also viewed as an oriented
segment. For $q,q'\in Q$ we write $\gamma w\overset\circ\subset
[q,q']$ if there exists $\beta\in G$ such that $\phi(\beta\gamma w)$
is contained in the $\epsilon$-neighborhood of the segment
$[\phi(\beta(q)),\phi(\beta(q'))]$ and $\phi(\beta(q))$ is closer to
the $\phi$-image of the initial endpoint of $\beta\gamma w$ than
the terminal endpoint. Since $w$ is long compared
to $\epsilon$ and the quasi-isometry constants of $\phi$, the
condition says that the copy $\gamma w$ is nearly contained in
$[q,q']$ in the oriented sense. Note that the notion is equivariant,
i.e. if $\gamma w\overset\circ\subset [q,q']$ then $\beta\gamma
w\overset\circ\subset [\beta(q),\beta(q')]$ for any $\beta\in
G$. Also, if $\gamma w\overset\circ\subset [q,q']$ and $\beta'\in G$ is
arbitrary, then $\phi(\beta'\gamma w)$
is contained in the $\epsilon'$-neighborhood of the segment
$[\phi(\beta'(q)),\phi(\beta'(q'))]$, where $\epsilon'$ also depends
linearly on $\delta$.

We say that two copies $\gamma w$ and $\gamma' w$ are {\it
  non-overlapping} if for some $\beta\in G$ the images
$\phi(\beta\gamma w)$ and $\phi(\beta\gamma' w)$ are disjoint. This
notion is also equivariant, and if $\beta'\in G$ is arbitrary the
intersection $\phi(\beta'\gamma w)\cap\phi(\beta'\gamma' w)$ has
uniformly bounded diameter, say by $\epsilon''$, which also depends
linearly on $\delta$. The constant $M$ and hence the length of $w$ will
be large compared to all these constants.

Now define the {\it non-overlapping count}
$N_w(q,q')$ as the maximal number of pairwise non-overlapping copies
$\gamma w\overset\circ\subset [q,q']$. To see that this number is
finite, note that the projection to $T$ of any $\gamma
w\overset\circ\subset [q,q']$ has a long overlap with
$[\phi(q),\phi(q')]$ while the pairwise overlaps are bounded.

{\it Claim.} Let $\delta$ be the hyperbolicity constant for $Q$ and
assume $|w|>>\delta$. If $r\in Q$ is
$2\delta$-close to a geodesic from $q$ to $q'$ then
$$|N_w(q,q')-N_w(q,r)-N_w(r,q')|\leq 2$$

Indeed, the union of maximal collections for $(q,r)$ and $(r,q')$
gives a non-overlapping collection for $(q,q')$, perhaps after
removing the two copies closest to $r$, and conversely, and maximal
collection for $(q,q')$ breaks up into two non-overlapping collections
for $(q,r)$ and $(r,q')$, perhaps after removing two copies closest to
$r$.

Now define $F:G\to\R$ by 
$$F(\alpha)=N_w(x_0,\alpha(x_0))-N_{w^{-1}}(x_0,\alpha(x_0))=
N_w(x_0,\alpha(x_0))-N_w(\alpha(x_0),x_0)$$
It is straightforward to
check $(a)-(d)$.

{\it Proof of (a).}
This is the standard Brooks tripod argument.
Let $\alpha,\beta\in G$ and let $r\in Q$ be within $2\delta$ of each
of 3 geodesics joining $x_0,\alpha(x_0),\beta\alpha(x_0)$. 

Now we have $N_w(x_0,\alpha(x_0))\sim N_w(x_0,r)+N_w(r,\alpha(x_0))$ by the
Claim. Write 5 more such equalities, for each oriented side of the
triangle $x_0,\alpha(x_0),\beta\alpha(x_0)$ and note that e.g.
$N_w(\alpha(x_0),\alpha\beta(x_0))=N_w(x_0,\beta(x_0))$.
Adding these (approximate) equalities, we find that
$$|F(\alpha\beta)-F(\alpha)-F(\beta)|\leq 12$$

{\it Proof of (b).}
Note that $g^{2k}(w)$ are non-overlapping, $k\in\Z$.
Thus
$N_w(x_0,g^{2k}(x_0))\geq k$ for every $k=1,2,\cdots$. It remains to
observe that $N_w(g^{k}(x_0),x_0)=0$ by the $WWPD$ assumption. Thus
$F(g^{2k})\geq k$ and so $\hat F(g)\geq \frac 12$ where $\hat F$ is
the homogenous quasi-morphism equivalent to $F$.

{\it Proof of (c).}
If the orbit $\{\alpha^i(x_0)\}$ is bounded, then so are the
translates $\{\beta\alpha^i(x_0)\}$ and their $\phi$-images, and hence
$N_{w^{\pm 1}}(x_0,\alpha^i(x_0))$ are uniformly bounded, and so are
$F(\alpha^i)$. 

{\it Proof of (d).} If $F(\alpha^N)\neq 0$ for large $N$, then there
must be a copy of $w$ near a virtual quasi-axis of $\alpha$ and we
see that the projection of this quasi-axis to the corresponding copy
of $\ell$ will contain the copy of $w$ except for segments
near the endpoints bounded by a fixed multiple of $\Delta+1$.
\end{proof} 

\begin{cor}
\label{bf+++}
There is a constant $R=R(\delta,\xi)$, a fixed multiple of
$\delta+\xi+1$, so that the following holds.
Let $(G,X,g,C)$ satisfy $WWPD$ where $X$ is a $\delta$-hyperbolic
graph and $\tau_g\geq R$.
Then there is a
quasi-morphism $F:G\to\R$ such that
\begin{enumerate}[(a)]
\item the defect of $F$ is $\leq 12$,
\item $F$ is unbounded on the powers of $g$; in fact $\hat F(g)\geq
  \frac 12$, 
\item $F$ is bounded on the powers of any elliptic element of $G$, and
\item $F$ is bounded on the powers of any hyperbolic element $\alpha$ 
with $\tilde\Pi_g(\alpha)\leq\tau_g-R$.
\end{enumerate}
\end{cor}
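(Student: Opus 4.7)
The plan is to combine Proposition \ref{promotion} with Proposition \ref{bf++}: first promote the $\delta$-hyperbolic action on $X$ to a $WWPD$ action on a quasi-tree $Q$, then apply the quasi-tree construction of the quasi-morphism, then translate conclusions (c) and (d) back to data about $X$.

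First I would apply Proposition \ref{promotion} to $(G,X,g,C)$ to obtain an action on a quasi-tree $Q$ with bottleneck constant $\Delta=\Delta(\delta,\xi)$ and projection constant $\xi^Q_g$, both fixed multiples of $\delta+\xi+1$, so that $(G,Q,g,C)$ also satisfies $WWPD$. Next I would observe that the translation length on $Q$ is not much smaller than on $X$: by the construction of $Q$ (via the projection complex on the collection of conjugate quasi-axes of $g$, as in the proof of \ref{promotion}) the $g$-invariant quasi-line $Y$ sits in $Q$ with its subspace metric, and $g$ acts by translation on $Y$, so $\tau_g^Q\geq\tau_g^X-O(\delta+\xi+1)$. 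Thus I can pick $R=R(\delta,\xi)$, a fixed multiple of $\delta+\xi+1$, so that $\tau_g^X\geq R$ forces $\tau_g^Q\geq \xi^Q_g+M(\Delta)$, where $M(\Delta)$ is the constant from Proposition \ref{bf++}. Applying that proposition on $Q$ produces the desired quasi-morphism $F:G\to\R$ satisfying (a) and (b) verbatim.

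For (c): if $\alpha$ is elliptic on $X$ then by Proposition \ref{promotion}(iii) it is elliptic on $Q$, so Proposition \ref{bf++}(c) bounds $F$ on its powers. For (d): suppose $\alpha$ is hyperbolic on $X$ with $\tilde\Pi^X_g(\alpha)\leq\tau_g-R$. By Proposition \ref{promotion}(iv), either $\alpha$ is elliptic on $Q$ (and (c) of \ref{bf++} applies) or $\alpha$ is hyperbolic on $Q$ and $\tilde\Pi^Q_g(\alpha)\leq\tilde\Pi^X_g(\alpha)+P$ for a constant $P=P(\delta,\xi)$ that is a fixed multiple of $\delta+\xi+1$. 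Enlarging $R$ from the previous paragraph to also satisfy $R\geq M+P$ (still a fixed multiple of $\delta+\xi+1$) guarantees $\tilde\Pi^Q_g(\alpha)\leq\tau_g^Q-M$, so Proposition \ref{bf++}(d) bounds $F$ on powers of $\alpha$.

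The only real obstacle is bookkeeping: verifying that all the constants produced along the way ($\Delta$, $\xi^Q_g$, $M(\Delta)$, $P$, plus the defect between $\tau_g^X$ and $\tau_g^Q$) are linear in $\delta+\xi+1$, so that a single threshold $R$ works for every item in the statement. This is built into the linearity clauses of Propositions \ref{promotion} and \ref{bf++} together with the fact, noted in the \emph{Convention} of the introduction, that our constants compose linearly; no new estimate is required.
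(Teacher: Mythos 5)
Your proposal is correct and follows exactly the route the paper intends: the paper's own proof is simply "immediate from Propositions \ref{promotion} and \ref{bf++}", i.e.\ promote $(G,X,g,C)$ to a $WWPD$ action on the quasi-tree $Q$ and apply the Brooks-type construction there, with items (c) and (d) pulled back through \ref{promotion}(iii) and (iv). Your added bookkeeping (comparing $\tau_g^Q$ with $\tau_g^X$ via the isometrically embedded quasi-line, and absorbing $M(\Delta)$ and $P$ into one threshold $R$ linear in $\delta+\xi+1$) just makes explicit what the paper leaves implicit.
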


\begin{proof}
This is immediate from Propositions \ref{promotion} and \ref{bf++}.
\end{proof} 

\begin{remark}\label{bf++++}
In applications we will not necessarily have $\tau_g\geq R$, but will
have to pass to a power $g^N$ of $g$ to achieve this. For our uniform
estimates it will be important that $N$ is uniformly bounded. 
In the setting of the curve graph $\mathcal C(\S)$ of a fixed surface
$\S$ this follows from two facts:
\begin{itemize}
\item $\tau_g\geq\epsilon_\S>0$ for every hyperbolic $g$
  \cite[Proposition 4.6]{MM}, and
\item $\xi_g$ is bounded by a fixed multiple of $\tau_g+1$
  (see Lemma \ref{brian}).
\end{itemize}

Similarly, uniformity on powers holds in hyperbolic spaces $\mathcal
C(\bY)$ constructed in Proposition \ref{bY}. More precisely, if $g$ is
supported on a subsurface $S\in\bY$ and the restriction is
pseudo-Anosov, then its translation length in $\bY$ is equal to its
translation length in $\cC(S)$ (this follows from Proposition
\ref{bY}(i) and (ii)) and the projections of quasi-axes of conjugates
are bounded by a fixed multiple of $\tau_g+1$ (this follows from
Proposition \ref{bY}(iii) and Lemma \ref{brian}).
\end{remark}

\section{Stable commutator length on mapping class
  groups}\label{scl}

Now assume that $G<MCG(\S)$ is a finite index subgroup and $g\in G$. By the
Nielsen-Thurston theory (see e.g. \cite{casson-bleiler}) there is a
unique minimal $g$-invariant collection $\mathcal C$ (possibly empty) of
pairwise disjoint simple closed curves which are non-parallel and no
curve bounds a disk or a punctured disk and so that after replacing $g$ by a
power:
\begin{itemize}
\item each puncture of $\S$ is fixed,
\item
each curve in $\mathcal C$ is $g$-invariant, 
\item each
component of $\S-\cup_{c \in \mathcal C}\mathcal C$ is $g$-invariant,
\item the restriction of $g$ to each complementary component is
  homotopic to
  identity or a pseudo-Anosov homeomorphism.
\end{itemize}

Let $S_i$ be a complementary component on which $g$ is
pseudo-Anosov. Collapsing all boundary components to punctures
produces a closed surface $\hat S_i$ with punctures and $g$ induces a
pseudo-Anosov homeomorphism $\hat g_i:\hat S_i\to\hat S_i$. There is a
(projectively) $\hat g_i$-invariant measured (singular) foliation
$\hat{\mathcal F_i}$ on $\hat S_i$ without saddle connections. Each
puncture is a $k$-prong singularity for some $k=1,2,\cdots$ (when
$k=2$ it is a regular point). After passing to a higher power of $g$
we may assume that 
\begin{itemize}
\item 
each such $\hat g_i$ is {\it arotational} i.e. all
prongs (directions of leaves) out of any puncture are fixed. 
\end{itemize}
We may reverse the collapsing process and blow up the newly created
punctures back to boundary components. A point in the boundary circle
is a tangent direction out of the puncture. The foliation
$\hat{\mathcal F_i}$ lifts to a foliation $\mathcal F_i$ on $S_i$ with
$k$ leaves transverse to the boundary circle.  The homeomorphism $\hat
g_i$ naturally lifts to a homeomorphism $g_i:S_i\to S_i$, and it has
(at least) $k$ fixed points on the boundary circle, one for each prong
(in fact, it has at least $k$ more fixed points coming from the
transverse invariant foliation). In any case, there is a canonical way
to isotope $g_i$ to a homeomorphism that fixes the boundary pointwise,
keeping the $k$ points fixed throughout the isotopy.

We can now glue the surfaces $S_i$ together to form $S$, but we will
also insert an annulus between any two boundary components to be
glued. The purpose of this is that otherwise the homeomorphism of the
glued surface that agrees with $g_i$ on $S_i$ may not be $g$; it may
differ from $g$ by a product of Dehn twists in the curves in $\mathcal
C$. We realize any such Dehn twists on the inserted annuli. Extend
each $g_i:S_i\to S_i$ by the identity in the complement of $S_i$ to
obtain a homeomorphism of $S$, also denoted $g_i$. We summarize the
discussion as follows.

\begin{thm}
For every $g\in G$ there is $N>0$ such that
$$g=g_1\cdots g_m\delta_1^{n_1}\cdots\delta_p^{n_p}$$ where $\delta_j$
are (left) Dehn twists supported on annuli around the curves in the
reducing multicurve $\mathcal C$, $n_i\neq 0$ and each $g_i$ is a
pseudo-Anosov supported on a complementary subsurface $S_i$. Any two
homeomorphisms above commute.
\end{thm}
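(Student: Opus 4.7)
The plan is to read off the decomposition directly from the paragraphs preceding the statement, verifying that each passage to a power is by a uniformly controlled amount (so that the $N$ in the conclusion can be taken to work uniformly for all $g$ in any bounded finite set if desired, though the statement only asks for existence of $N$ per $g$).

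First I would invoke Nielsen--Thurston (as in \cite{casson-bleiler}) to produce the canonical $g$-invariant reducing multicurve $\mathcal{C}$. Replacing $g$ by a power $g^{N_1}$, I may assume all punctures are fixed, every curve of $\mathcal{C}$ is individually preserved (not just as a set), every complementary component $S_i$ of $\S\setminus\mathcal{C}$ is preserved, and the restriction $g|_{S_i}$ is either isotopic to the identity or to a pseudo-Anosov. Next I pass to a further power $g^{N_2}$ to arrange that each pseudo-Anosov $\hat g_i:\hat S_i \to \hat S_i$ on the collapsed component is arotational, i.e.\ fixes every prong direction at each puncture; this is possible because there are only finitely many $\hat S_i$ and each $\hat g_i$ permutes a finite set of prongs.

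Now I would perform the blow-up/gluing construction described before the theorem. Each arotational $\hat g_i$ lifts canonically to $g_i:S_i\to S_i$ that fixes every boundary circle pointwise (using the isotopy that keeps the prong-fixed points fixed throughout). Extending by the identity off $S_i$, I obtain pairwise commuting homeomorphisms $g_i$ of $\S$ whose supports lie in the disjoint subsurfaces $S_i$. The composition $g_1\cdots g_m$ agrees with $g^N$ (where $N=N_1N_2$) on each $S_i$, so the difference
\[
\eta := (g_1\cdots g_m)^{-1} g^N
\]
is supported in a regular neighborhood of $\mathcal{C}$, i.e.\ is a composition of Dehn twists $\delta_1^{n_1}\cdots\delta_p^{n_p}$ along the curves of $\mathcal{C}$, with the twists on small annular neighborhoods inserted between adjacent $S_i$. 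The exponents $n_j$ need not all be nonzero a priori, but by absorbing trivial factors into the index set one can write only those $n_j\neq 0$; alternatively one may pass to one more power to arrange that the $n_j$ one keeps are all nonzero.

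Finally I would verify commutativity: the $g_i$ have pairwise disjoint supports $S_i$, the $\delta_j$ have pairwise disjoint annular supports, and the annular supports are disjoint from (or lie in the boundary collars of) the $S_i$, where each $g_i$ is the identity. Hence all factors commute. The main potential pitfall is the lifting/isotopy step: making sure that once $\hat g_i$ is arotational the boundary-fixing lift is canonical (so the product $g_1\cdots g_m$ is well-defined up to isotopy rel.\ $\partial$) and that $g^N(g_1\cdots g_m)^{-1}$ really lies in the multi-twist subgroup generated by $\mathcal{C}$. This is where the insertion of annular collars is essential --- without them, discrepancies on the gluing circles cannot be absorbed into pure Dehn twists and the factors might fail to commute. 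Everything else is bookkeeping.
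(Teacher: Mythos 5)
Your proposal is correct and follows essentially the same route as the paper: the theorem is stated there as a summary of the preceding discussion, which passes to an arotational power via Nielsen--Thurston, lifts each collapsed pseudo-Anosov canonically to a boundary-fixing homeomorphism of $S_i$, and inserts annuli along $\mathcal C$ precisely so that the discrepancy with $g^N$ is realized as commuting Dehn twists. Your flagged "pitfall" about the canonical lift and the annular collars is exactly the point the paper addresses with the inserted annuli, so nothing is missing.
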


We now make some further definitions. First, after possibly taking a
further power of $g$, we may assume:
\begin{itemize}
\item each $g_i$ and $\delta_j^{n_j}$ is in $G$ (this is where we are
  using that $G$ has finite index in $MCG(S)$) and also in $\mathcal
  S$ (see Proposition \ref{BBF}),
\item for each $i$, either $g_i$ is conjugate (in $G$) to $g_i^{-1}$
  or $g_i^m$ is not conjugate to $g_i^{-m}$ for any $m>0$ (the latter
  is equivalent to saying that no $\gamma\in G$ interchanges the
  stable and unstable foliation of $g_i$).
\end{itemize}
If $g_i$ is conjugate to $g_i^{-1}$ in $G$ we say $g_i$ is {\it achiral}, and
otherwise it is {\it chiral}. 
If $g_i$ and $g_j$ are both chiral, we
say they are {\it equivalent} if a nontrivial power of $g_i$ is
conjugate in $G$ to a power of $g_j$. In other words, $g_i$ and $g_j$ are
equivalent if there is some element $\gamma\in G$ that takes $S_i$ to
$S_j$ and takes the stable foliation of $g_i$ to either the stable or
the unstable foliation of $g_j$.
We make the same definition for Dehn
twists (recall that a power of a Dehn twist is not conjugate to its
inverse, so we may view them as chiral): $\delta_i^{n_i}$ and
$\delta_j^{n_j}$ are {\it equivalent} if some of their nontrivial
powers are conjugate (equivalently, the corresponding annuli are in
the same $G$-orbit).

Let $\{g_{i_1},g_{i_2},\cdots,g_{i_p}\}$ be an equivalence class. Thus
$g_{i_1}^{m_1},g_{i_2}^{m_2},\cdots,g_{i_p}^{m_p}$ are all conjugate
for certain $m_j\neq 0$. We will say this equivalence class is {\it
  essential} if 
$$\frac 1{m_1}+\frac 1{m_2}+\cdots+\frac 1{m_p}\neq 0$$ and {\it
  inessential} otherwise (an example of an inessential class has appeared
in \cite{EnKo}).  Since $g_i^m$ conjugate to $g_i^n$ implies $m=\pm n$
for any $g_i$, and implies $m=n$ for chiral $g_i$, the exponents $m_i$
above are unique up to a common multiple. We make the same definition
for equivalence classes of powers of Dehn twists
$\{\delta_{i_1}^{n_1},\cdots,\delta_{i_p}^{n_p}\}$ with $m_i$ chosen
so that all $\delta_{i_j}^{n_jm_j}$ are pairwise conjugate.

\begin{thm}\label{chirality}
Let $G<MCG(\S)$ be a subgroup of finite index and $g\in G$. Then
$scl(g)>0$ if and only if some chiral equivalence class is essential.
\end{thm}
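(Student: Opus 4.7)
The plan is to prove the two directions of the equivalence separately. For the ``only if'' direction (no essential class implies $\scl(g) = 0$), I would reduce to the obstructions (a)--(e) from the introduction. After replacing $g$ by an arotational power whose Nielsen--Thurston factors all lie in $\mathcal S \cap G$, partition the factors by chirality and the equivalence relation. Achiral factors give $\scl = 0$ by (b); each inessential chiral equivalence class $\{g_{i_1},\ldots,g_{i_p}\}$ is, by definition, a commuting product with $g_{i_j}^{m_j}$ pairwise conjugate and $\sum_j 1/m_j = 0$, so (d) yields $\scl = 0$ on its product; and (e) glues these commuting blocks into $\scl(g) = 0$.

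For the ``if'' direction, the strategy is to build a homogeneous quasi-morphism on $G$ that does not vanish at $g$. Fix an essential class $\{g_{i_1},\ldots,g_{i_p}\}$ with $g_{i_j}^{m_j}$ all conjugate in $G$ and $\sum_j 1/m_j \neq 0$; the pseudo-Anosov and Dehn-twist cases run in parallel, so assume pseudo-Anosov. Let $\bY$ be the $\mathcal S$-orbit of $S_{i_1}$; every support $S_{i_j}$ lies in $\bY$ since the classes consist of $\mathcal S$-translates. Proposition \ref{bY}(iv) makes $(\mathcal S,\cC(\bY),g_{i_1},C)$ a $WWPD$ quadruple, and after raising $g$ to a uniformly bounded power so that $\tau_{g_{i_1}}$ exceeds the threshold $R$ of Corollary \ref{bf+++} (using Remark \ref{bf++++}), that corollary yields a quasi-morphism $F_0$ on $\mathcal S$ with $\hat F_0(g_{i_1}) \geq 1/2$. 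Restricting to the finite-index subgroup $\mathcal S \cap G$ of $G$ and transferring by averaging over coset representatives gives a quasi-morphism $F$ on $G$. Its homogenization $\hat F$ is a homomorphism on the abelian subgroup generated by the commuting Nielsen--Thurston decomposition, so
$$\hat F(g) \;=\; \sum_{j=1}^p \hat F(g_{i_j}) \;+\; \sum_{\textrm{other factors}} \hat F(\cdot).$$
Conjugation invariance of $\hat F$ together with $g_{i_j}^{m_j}$ being conjugate to $g_{i_1}^{m_1}$ yields $\hat F(g_{i_j}) = (m_1/m_j)\hat F(g_{i_1})$, so the essential-class contribution equals $m_1 \hat F(g_{i_1}) \sum_j 1/m_j \neq 0$.

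The main obstacle I anticipate is showing the ``other factors'' sum is zero. Achiral factors $h$ drop out via $\hat F(h) = \hat F(h^{-1}) = -\hat F(h) = 0$. For a chiral factor $g_k$ in a different equivalence class than $g_{i_1}$, I would establish $\tilde\Pi_{g_{i_1}}(g_k) \leq \tau_{g_{i_1}} - R$ by case analysis on the supports of conjugates of $g_k$: if the support avoids $\bY$, the second half of Lemma \ref{bounded.projection} bounds the quasi-axis's intersection with $\cC(S_{i_1})$; if it lies in $\bY$ but on a surface other than $S_{i_1}$, Proposition \ref{bY}(iii) supplies the bound; and if a conjugate of the support equals $S_{i_1}$, then the (un)stable foliations necessarily differ from those of $g_{i_1}$ (else the equivalence classes would coincide), so acylindricity on $\cC(S_{i_1})$ via Lemma \ref{brian} bounds the projection. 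Corollary \ref{bf+++}(d) then forces $\hat F(g_k) = 0$. Dehn-twist factors are dispatched analogously using Lemma \ref{bounded.projection} (and symmetrically if the chosen essential class is of Dehn-twist type, where one takes $\bY$ to be an $\mathcal S$-orbit of annuli). Finally Proposition \ref{p:scl}(ii) converts $\hat F(g) \neq 0$ into $\scl_G(g) > 0$, completing the argument.
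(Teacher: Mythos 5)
Your ``only if'' direction is fine (it amounts to the paper's observation that inessential/achiral classes force bounded commutator length on powers), but the ``if'' direction has two genuine gaps, both stemming from the fact that you fix an \emph{arbitrary} essential chiral class and take $\bY$ to be the $\mathcal S$-orbit of its support. The paper's proof makes a careful choice that your argument cannot do without: among essential chiral classes it takes one of maximal complexity $-\chi$ of the supporting subsurface, and among those, the one whose primitive root has largest translation length in the curve graph of its support. Without this, your case analysis for the ``other factors'' breaks in two places. First, if another chiral factor $g_k$ is supported on a subsurface of larger complexity, a conjugate of $g_k$ can be supported on a surface that \emph{properly contains} some $S\in\bY$; then the hypothesis of Lemma \ref{bounded.projection} (``$F$ does not contain any $S\in\bY$'') fails, and indeed a pseudo-Anosov on a surface containing $S_{i_1}$ can have projections to $\cC(S_{i_1})$ (governed by the projections of its stable/unstable laminations) that are arbitrarily large compared with $\tau_{g_{i_1}}$, so the threshold $\tilde\Pi_{g_{i_1}}(\cdot)\le\tau_{g_{i_1}}-R$ needed for Corollary \ref{bf+++}(d) is not met. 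Second, in your case where a conjugate of $g_k$ is supported exactly on a surface of $\bY$ with different foliations, Lemma \ref{brian} only gives $\Pi_{g}(f)\le\xi_g$ under the hypothesis $\tau_f\le\tau_g$; if the other essential class has larger translation length this hypothesis fails, acylindricity only bounds the overlap in terms of $\tau_f$, and again Corollary \ref{bf+++}(d) does not apply. In either situation the quasi-morphism you build may be unbounded on powers of the other factors, and since $\hat F$ is additive on the commuting decomposition, their contributions can cancel the contribution of your chosen class, so $\hat F(g)\ne 0$ is not established.

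The second gap is that you never verify $\hat F(g_{i_1})\neq 0$ after averaging over coset representatives of $G/(G\cap\mathcal S)$; you use it when you write the essential-class contribution as $m_1\hat F(g_{i_1})\sum_j 1/m_j$. The averaged sum $\sum_i \hat F_0(\gamma_i g_{i_1}\gamma_i^{-1})$ has one summand $\ge 1/2$, but the other summands are conjugates by elements of $G$ that need not lie in $\mathcal S$, so their supports $\gamma_i(S_{i_1})$ need not lie in $\bY$ (for the same reason, your assertion that all supports $S_{i_j}$ in the class lie in $\bY$ is unjustified: equivalence is conjugacy in $G$, not in $\mathcal S$). A priori such a summand could be negative and cancel the first. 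This is exactly the paper's Claim 1: one shows each summand is either bounded --- via Lemma \ref{bounded.projection}, Proposition \ref{bY}(iii), or Lemma \ref{brian} (legitimate here because the conjugate has the \emph{same} translation length as $g_{i_1}$, after replacing $g$ by a bounded power so the linear bound drops below $\tau-R$) --- or, when the quasi-axes are parallel, translates in the same direction, and the same-direction statement is precisely where chirality of the class enters. Your proposal uses chirality only to kill achiral factors, so this step is missing. Both gaps are repairable, but only by reorganizing the argument along the paper's lines: make the maximal choice of class first, and add a Claim-1-type analysis of the conjugates of $g_{i_1}$ itself.
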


Note that if $G$ is a subgroup of $\mathcal S$ then every class is
chiral (by Corollary \ref{involution}) and has one element, so every
nontrivial $g\in G$ has $scl_G(g)>0$. 

\begin{proof}[Proof of Theorem \ref{chirality}]
We first prove that $scl(g)=0$ if every chiral class is
inessential. Let $H:G\to\R$ be a homogeneous quasi-morphism. We
will argue that $H(g)=0$. If $g_i$ is achiral then
$H(g_i)=H(g_i^{-1})$ so $H(g_i)=0$. Let
$\{g_{i_1},g_{i_2},\cdots,g_{i_p}\}$ be a chiral equivalence class
with $g_{i_1}^{m_1},\cdots,g_{i_p}^{m_p}$ all conjugate. Then
$H(g_{i_1}^{m_1})=\cdots=H(g_{i_p}^{m_p})$; call the common value
$A$. Thus $H(g_{i_j})=\frac A{m_j}$ and $H(g_{i_1}g_{i_2}\cdots
g_{i_p})=A(\frac 1{m_1}+\cdots+\frac 1{m_p})$, which is 0 for an
inessential class. A similar argument applies to inessential classes
of powers of Dehn twists. It now follows that $H(g)=0$ since $H$ is
additive on commuting elements.

Now assume that, after reindexing, $\{g_1,\cdots,g_p\}$ is an
essential chiral equivalence class. In case there are several such
classes we choose one with highest complexity ($=-\chi(S_1)$ where
$S_1$ is the surface supporting $g_1$). E.g. annuli have the smallest
complexity, so powers of Dehn twists would be chosen only if nothing
else is available. Further, in case there are several essential chiral
classes with maximal complexity we choose one whose primitive root has
largest translation length. That is, if $\{g_{i_1},\cdots,g_{i_s}\}$
is another essential chiral class of maximal complexity, and we write
$g_1=h_1^{N_1}$, $g_{i_1}=h_{i_1}^{N_{i_1}}$ with $N_1,N_{i_1}>0$
maximal possible and with $h_1$ [$h_{i_1}$] supported on the same
subsurface as $g_1$ [$g_{i_1}$], then $\tau_{h_1}\geq \tau_{h_{i_1}}$
(translation lengths are with respect to the curve graph of the
supporting subsurface).

We wish to construct a quasi-morphism $H:G\to\R$ which is unbounded on
the powers of $g_1 \cdots g_p$ but bounded on the powers of all other
chiral $g_j$ and Dehn twists that belong to essential classes. It will
then follow that $H$ is unbounded on the powers of $g$ and hence
$scl(g)>0$. For simplicity we assume $g_1$ is not a Dehn twist.

Let $G'=G\cap\mathcal S$, where $\mathcal S$ is the subgroup in
Proposition \ref{BBF}, so $G'$ is normal in $G$. We will now consider
the action of $G'$ on the graph $\cC(\bY)$ of Proposition \ref{bY},
where $\bY$ is the $\mathcal S$-orbit of $S_1$. According to
Proposition \ref{bY} $(G',\cC(\bY),g_1,C)$ satisfies $WWPD$
where $C<G'$ preserves the stable and unstable foliations of $g_1$ on
$S_1$. 

Choose coset representatives
$1=\gamma_1,\gamma_2,\cdots,\gamma_s\in G$ of $G/G'$.

Let $H':G'\to\R$ be the
associated quasi-morphism as in Corollary \ref{bf+++}. We are really
replacing $g_1$ here with a bounded power when applying this corollary
(see Remark \ref{bf++++}).
Finally, define $H:G'\to\R$ by
$$H(\gamma)=\sum_{i=1}^sH'(\gamma_i\gamma\gamma_i^{-1}).$$ It was
verified in \cite[Section 7]{rank1} that $H$ extends to a
quasi-morphism on $G$. (If we first replace $H$ by the homogeneous
quasi-morphism $\hat H$, then $\hat H(f)=\frac 1n\hat H(f^n)$ when
$f^n\in G'$ extends $\hat H$ to $G$.  Alternatively, we can first
replace $H'$ by $\hat H'$, then define $H$, which
is automatically homogeneous). Note that rechoosing the coset
representatives changes $H$ by a uniformly bounded amount.

\vskip 0.5cm 
{\bf Claim 1.} $H$ is unbounded on the powers of
$g_1$. 
\vskip 0.5cm
{\it Proof of Claim 1.}
The summand $H'(g_1^N)$ corresponding to the trivial
coset is unbounded on the powers of $g_1$ by construction (see Corollary
\ref{bf+++}) and we only
need to see that summands $H'(\gamma_ig_1^N\gamma_i^{-1})$ are bounded
or have the same sign as $H'(g_1^N)$. The support of
$\gamma_ig_1^N\gamma_i^{-1}$ is the surface $\gamma_i(S_1)$ where
$S_1$ is the support of $g_1$.

If $\gamma_i(S_1)\not\in\bY$ then by Lemma \ref{bounded.projection}
$\gamma_ig_1\gamma_i^{-1}$ has virtual quasi-axes that intersect every
$\cC(S)$ in a uniformly bounded segment, so their projections to the
translates of virtual quasi-axes $\ell$ of $g_1$ are uniformly bounded. It
now follows that $H'(\gamma_ig_1^N\gamma_i^{-1})$ is bounded by
Proposition \ref{bf+++}(d).

If $\gamma_i(S_1)\in\bY$ then $\gamma_ig_1\gamma_i^{-1}$ preserves
$\cC(\gamma_i(S_1))$. Its translation length on $\cC(\gamma_i(S_1))$
is $\tau_{g_1}$ since $\gamma_i$ conjugates the action of $g_1$ on
$\cC(S_1)$ and the action of $\gamma_ig_1\gamma_i^{-1}$ on
$\cC(\gamma_i(S_1))$.  Thus it follows from Lemma \ref{brian} that the
projection of a quasi-axis of $\gamma_ig_1\gamma_i^{-1}$ to a
$G'$-translate of $\ell$ is either bounded by a linear function of
$\tau_{g_1}$, or the two lines are parallel. In the former case, after
replacing $g_1$ by a definite power, we may assume the projections are
bounded by $\tau_{g_1}-R$ and so $H'$ is bounded on the powers of
$\gamma_ig_1\gamma_i^{-1}$ by Corollary \ref{bf+++}. In the latter case
$\gamma_ig_1\gamma_i^{-1}\in C$ and by chirality
$\gamma_ig_1\gamma_i^{-1}$ does not translate the opposite way from
$g_1$, so $H'(\gamma_ig_1^N\gamma_i^{-1})$ has the same sign as
$H'(g_1^N)$.

\vskip 0.5cm 
{\bf Claim 2.} $H$ is unbounded on the powers of
$g_1\cdots g_p$. 
\vskip 0.5cm
{\it Proof of Claim 2.} Denote by $\hat H$ the homogeneous
quasi-morphism bounded distance away from $H$. If $\hat
H(g_1^{m_1})=A\neq 0$ then $\hat H(g_i^{m_i})=A$, $\hat
H(g_i)=\frac A{m_i}$ and $\hat H(g_1\cdots g_p)=A(\frac
1{m_1}+\cdots+\frac 1{m_p})\neq 0$ since the class is essential.

\vskip 0.5cm 
{\bf Claim 3.} Let $\{g_{i_1},g_{i_2},\cdots,g_{i_q}\}$
be an equivalence class distinct from $\{g_1,\cdots,g_p\}$. Then $H$
is bounded on the powers of $g_{i_1}g_{i_2}\cdots g_{i_q}$.
 
\vskip 0.5cm {\it Proof of Claim 3.}  If the class is achiral or
inessential chiral then we showed that every quasi-morphism is bounded
on the powers. Now assume the class is essential chiral. The argument
is similar to Claim 1.  Consider a conjugate $\gamma_i
g_{i_1}\gamma_i^{-1}$. Let $S_{i_1}$ be the support of $g_{i_1}$. By
the maximality assumption the surface $\gamma_i(S_{i_1})$, which
supports $\gamma_i g_{i_1}\gamma_i^{-1}$, does not
properly contain any surface in $\bY$. If it is not equal to any
surface in $\bY$ then $H'$ is bounded on the powers of $\gamma_i
g_{i_1}\gamma_i^{-1}$ as in Claim 1. If $\gamma_i(S_{i_1})\in\bY$ then
we can apply Lemma \ref{brian} again since $\tau_{\gamma_i
g_{i_1}\gamma_i^{-1}}\leq \tau_{g_1}$ to deduce that $H'$ is bounded
on the powers of $\gamma_i
g_{i_1}\gamma_i^{-1}$
(it is not possible for the virtual quasi-axes to be
parallel here since $g_1$ and $g_{i_1}$ belong to distinct classes). It
now follows that $H$ is unbounded on the powers of $g$.

The argument when the the essential class consists of powers of Dehn
twists is similar. We then use the collection $\bY$ of annuli
consisting of the $G'$-orbit of the annuli supporting Dehn twists in
the collection to build a hyperbolic graph $X=\cC(\bY)$. The
role of the curve graph $\mathcal C(S_1)$ is played by the curve
graph of the annulus, which is quasi-isometric to $\R$.
\end{proof}

We now state a number of consequences of the above proof. Some of them
require going back and checking a few things.

\subsection{Separability}
We say that $g,g'\in G$ are {\it inseparable} if for any two
homogeneous quasi-morphisms $H,H':G\to\R$ the vectors
$(H(g),H'(g))\in\R^2$ and $(H(g'),H'(g'))\in\R^2$ are linearly
dependent. Otherwise, $g,g'$ are {\it separable} (see \cite{PoRu}, \cite{EnKo}). 

\begin{itemize}
\item 
If $g$ has
$scl(g)=0$ then $g,g'$ are inseparable for every $g'$. 
\item
$g^n$ and $g^m$ are inseparable for every $m,n$,
\item
if 
$g,h$ are (in)separable, then $g^n,h^m$ are (in)separable for any
$m,n\neq 0$.
\end{itemize}

Suppose $g,g'\in G$ have $scl(g)>0$, $scl(g')>0$. After passing to
powers we may assume each can be written as the product of powers of
Dehn twists and pseudo-Anosov homeomorphisms on subsurfaces, as
discussed above. Suppose there are essential chiral classes
$\{g_{i_1},\cdots,g_{i_p}\}$ for $g$ and
$\{g'_{j_1},\cdots,g'_{j_q}\}$ for $g'$ so that $g_{i_1}$ 
and $g'_{j_1}$ have conjugate powers (i.e. the two classes are
equivalent to each other). Then for any homogeneous quasi-morphism
$H:G\to \R$ the ratio
$$\frac{H(g_{i_1}\cdots g_{i_p})}{H(g'_{j_1}\cdots g'_{j_q})}$$
does not depend on $H$ (as long as it is not $\frac 00$). We call it
the {\it characteristic ratio} of the essential chiral class that
occurs in both $g$ and $g'$. We make a similar definition for
conjugacy classes of powers of Dehn twists that occur in both $g$ and
$g'$. Note that the characteristic ratio is always rational, and it
can be computed from knowing which powers of the mapping classes in it
are conjugate. 

\begin{prop}
Let $g,g'\in G$ be two elements with $scl(g)>0$, $scl(g')>0$. Then
$g,g'$ are inseparable if and only if every essential chiral class of
$g$ also occurs in $g'$ and vice-versa, and all characteristic ratios
are equal.
\end{prop}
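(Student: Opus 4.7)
The plan is to leverage the structural decomposition of homogeneous quasi-morphisms implicit in the proof of Theorem~\ref{chirality}. After passing to a common power, write $g$ and $g'$ as commuting products of pseudo-Anosov pure components and Dehn twists. Since any homogeneous quasi-morphism $H : G \to \R$ is additive on commuting products, constant on conjugacy classes, and vanishes on achiral pieces and on inessential chiral classes (these vanishings are recorded at the start of the proof of Theorem~\ref{chirality}), its value on $g$ decomposes as
\[
H(g) = \sum_{C} A_C(H)\,\lambda_C^g,
\]
where the sum runs over the essential chiral classes $C$ appearing in $g$, $A_C(H) := H(h_C)$ is the $H$-value of a fixed representative conjugate of the class, and $\lambda_C^g \neq 0$ is the rational coefficient determined by the multiplicities of $C$ in $g$. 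The same formula holds for $g'$, and the characteristic ratio of a shared class $C$ is precisely $\lambda_C^g/\lambda_C^{g'}$.

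I would argue the forward direction by contrapositive. Suppose first that $g$ has an essential chiral class $C_0$ not shared with $g'$. Apply the construction of Theorem~\ref{chirality} with $C_0$ chosen to have maximal complexity and translation length, producing a homogeneous quasi-morphism $H_1$ with $A_{C_0}(H_1) \neq 0$ but $A_{C'}(H_1) = 0$ for every other essential chiral class $C'$. Then $H_1(g) = A_{C_0}(H_1)\lambda_{C_0}^g \neq 0$ while $H_1(g') = 0$; pairing with any $H_2$ for which $H_2(g') \neq 0$ (which exists since $\scl(g') > 0$) yields linearly independent vectors $(H_1(g), H_2(g))$ and $(0, H_2(g'))$. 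Suppose instead that $g$ and $g'$ share all essential classes but two characteristic ratios differ, say at $C_1, C_2$. Produce quasi-morphisms $H_1, H_2$ isolating $C_1$ and $C_2$ respectively; the $2 \times 2$ determinant of the matrix with rows $(H_i(g), H_i(g'))$ equals $A_{C_1}(H_1)\,A_{C_2}(H_2)\,(\lambda_{C_1}^g \lambda_{C_2}^{g'} - \lambda_{C_2}^g \lambda_{C_1}^{g'})$, which is nonzero exactly when the ratios disagree.

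For the reverse direction, assume $g$ and $g'$ share all essential chiral classes and $\lambda_C^g/\lambda_C^{g'} = r$ for every shared $C$. Then the decomposition gives
\[
H(g) = \sum_C A_C(H)\,\lambda_C^g = r \sum_C A_C(H)\,\lambda_C^{g'} = r\,H(g')
\]
for every homogeneous quasi-morphism $H$, so $(H(g), H'(g)) = r\,(H(g'), H'(g'))$ for every pair $H, H'$, showing inseparability.

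The step requiring the most care is verifying that the quasi-morphism $H$ built for a targeted essential chiral class $C_0$ in the proof of Theorem~\ref{chirality} really does satisfy $A_{C'}(H) = 0$ for every other essential chiral class $C'$. This follows by combining the boundedness supplied by Claim~3 of that proof on the commuting product of $C'$ with the essentiality of $C'$, which forces $\sum 1/m \neq 0$ and hence $A_{C'}(H) = 0$. The parallel statement for essential classes of Dehn twists is proved analogously, using the hyperbolic graph $\cC(\bY)$ where $\bY$ is the $G'$-orbit of the relevant annuli, exactly as at the end of the proof of Theorem~\ref{chirality}.
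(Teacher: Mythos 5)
Your overall route is the same as the paper's: decompose $H(g)$ over essential chiral classes using homogeneity, conjugacy invariance, additivity on commuting elements and the vanishing on achiral and inessential pieces, then detect individual classes with quasi-morphisms coming from the construction in the proof of Theorem \ref{chirality}; your reverse direction and the two $2\times 2$ determinant computations are exactly the paper's argument, just written out. The gap is in the step you yourself flag as delicate: the assertion that running that construction ``targeted at $C_0$'' produces $H$ with $A_{C_0}(H)\neq 0$ and $A_{C'}(H)=0$ for \emph{every} other essential chiral class $C'$. You cannot ``choose $C_0$ to have maximal complexity and translation length'': $C_0$ is prescribed by the hypothesis (a class occurring in $g$ but not in $g'$, or one of the two classes at which the characteristic ratios differ -- and in the latter case at most one of $C_1,C_2$ can be of top priority). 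The boundedness you cite from Claim 3 is proved only under that maximality: its proof needs the conjugated supporting surface $\gamma_i(S_{i_1})$ not to properly contain any member of $\bY$ (so that Lemma \ref{bounded.projection} applies) and needs $\tau_{\gamma_i g_{i_1}\gamma_i^{-1}}\le \tau_{g_1}$ (so that Lemma \ref{brian} applies). For a class $C'$ of strictly larger complexity than $C_0$ neither input is available, and the conclusion can genuinely fail: e.g.\ if $C_0$ is a Dehn twist class, a pseudo-Anosov supported on a larger subsurface whose laminations twist a great deal around the relevant annuli is hyperbolic on $\cC(\bY)$ with axis overlapping the annular quasi-lines in segments much longer than $w$, so condition (d) of Corollary \ref{bf+++} does not apply and $\hat H$ need not vanish on its powers. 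So ``Claim 3 plus essentiality'' does not justify isolation of an arbitrary prescribed class.

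The isolation statement is nonetheless true, and the repair is linear algebra rather than a single run of the construction: order all essential chiral classes occurring in $g$ or $g'$ by complexity of the supporting subsurface and then by translation length (within a given $\mathcal S$-orbit of subsurfaces). For each class $C$ the quasi-morphism $H_C$ built from $C$ satisfies $A_C(H_C)\neq 0$, and $A_{C'}(H_C)=0$ for every distinct class $C'$ of lower or equal priority -- this is precisely what Claims 1 and 3 deliver, via Lemma \ref{bounded.projection} when the supporting surfaces lie in different orbits and via Lemma \ref{brian} when they lie in the same orbit. The matrix $\bigl(A_{C'}(H_C)\bigr)$ is then triangular with nonzero diagonal, so suitable linear combinations of the $H_C$ realize any functional on the classes, in particular one isolating your prescribed $C_0$ (or $C_1$, $C_2$); with that in hand your two determinant arguments go through verbatim. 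The paper's own proof is terse at the same spot, but it leans on the priority ordering already built into the proof of Theorem \ref{chirality}; if you keep your formulation you must supply this combination argument (or an equivalent) rather than appeal to Claim 3 directly.
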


\begin{proof}
If there is an essential chiral class that occurs in $g$ but not $g'$,
the proof of Theorem \ref{chirality} produces a homogeneous
quasi-morphism with $H(g)\neq 0$ and $H(g')=0$, so $g,g'$ are
separable. Otherwise, for each characteristic ratio $r$ there is a
homogeneous quasi-morphism $H$ with $\frac{H(g)}{H(g')}=r$.

If all essential chiral classes for $g$ occur in $g'$ and vice-versa, and all
characteristic ratios are equal to $r$, then $\frac{H(g)}{H(g')}=r$
(or $\frac 00$) for any homogeneous quasi-morphism $H$, so $g,g'$
are inseparable.
\end{proof}

For example, Dehn twists in curves in different $G$-orbits are
separable. 

There is a more general statement along the same lines. Denote by
$\mathcal X$ the real vector space whose basis consists of equivalence
classes (over $G$) of
pure mapping classes which are chiral. In each class $[\gamma]$ choose a
representative $\gamma$. Thus if $g\in MCG(\Sigma)$ then a definite
power $g^N$ decomposes as a product of pure classes, and after
ignoring achiral components we get an element $\chi(g)\in \mathcal X$ by
setting
$$\chi(g)=\sum_\gamma n_\gamma [\gamma]$$
where $n_\gamma$ is computed as follows. Say $g_1,\cdots,g_k$ are the
components of $g^N$ equivalent to $\gamma$, so that $g_i^{m_i}$ is
conjugate to $\gamma^{r_i}$. Then let $n_{\gamma}=\frac 1N\sum_i
\frac {r_i}{m_i}$. The arguments above show:

\begin{prop}\label{4.3}
Let $h_1,\cdots,h_p\in G$. The dimension of the space of
functions $\{h_1,\cdots,h_p\}\to\R$ which are restrictions of
homogeneous quasi-morphisms $G\to\R$ is equal to the dimension
of the subspace of $\mathcal X$ spanned by $\chi(h_1),\cdots,\chi(h_p)$.
\end{prop}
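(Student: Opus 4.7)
The plan is to factor the evaluation map $\mathrm{ev}\colon V\to\R^p$, $H\mapsto(H(h_1),\dots,H(h_p))$, where $V$ is the space of homogeneous quasi-morphisms $G\to\R$, through the dual space $\mathcal{X}^*$. Define $\Phi\colon V\to\mathcal{X}^*$ by $\Phi(H)([\gamma])=H(\gamma)$ on the chosen representatives; this is well defined because homogeneous quasi-morphisms are constant on conjugacy classes. The key identity $H(h)=\Phi(H)(\chi(h))$ is established exactly as at the start of the proof of Theorem~\ref{chirality}: decompose a sufficiently high power $h^N$ as a commuting product of pure Nielsen--Thurston components, and use that $H$ is additive on this abelian subgroup, vanishes on achiral components (since $H(g)=H(g^{-1})=-H(g)$), and satisfies $H(g_i)=(r_i/m_i)H(\gamma)$ whenever $g_i^{m_i}$ is conjugate to $\gamma^{r_i}$ with $[\gamma]$ the chiral class of $g_i$; summing over all components and dividing by $N$ recovers $\Phi(H)(\chi(h))$ by the very definition of $\chi(h)$. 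Consequently the evaluation factors as $V\xrightarrow{\Phi}\mathcal{X}^*\to\R^p$ and $\mathrm{rank}(\mathrm{ev})\leq d:=\dim\mathrm{span}\{\chi(h_1),\dots,\chi(h_p)\}$.

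For the reverse inequality, enumerate the distinct chiral equivalence classes $[\gamma_1],\dots,[\gamma_k]$ appearing with nonzero coefficient in some $\chi(h_j)$ and order them first by decreasing complexity of the supporting subsurface $S_{\gamma_i}$ (annuli last) and, within each complexity level, by decreasing translation length $\tau_{\gamma_i}$ on $\mathcal{C}(S_{\gamma_i})$. For each $i$, run the construction in the proof of Theorem~\ref{chirality} with $\gamma_i$ playing the role of $g_1$: take the $\mathcal{S}$-orbit $\mathbf{Y}_i$ of $S_{\gamma_i}$, invoke Proposition~\ref{bY}(iv) to get a WWPD action of $G'=G\cap\mathcal{S}$ on $\mathcal{C}(\mathbf{Y}_i)$ in which $\gamma_i$ is hyperbolic, apply Corollary~\ref{bf+++} (after replacing $\gamma_i$ by a uniformly bounded power, as in Remark~\ref{bf++++}) to produce a homogeneous quasi-morphism $H_i'\colon G'\to\R$, and average over coset representatives of $G/G'$ to extend to a homogeneous quasi-morphism $H_i\colon G\to\R$.

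I claim the matrix $M=(H_i(\gamma_j))_{i,j=1}^k$ is lower triangular with nonzero diagonal and hence invertible, which forces $\mathrm{rank}(\mathrm{ev})\geq d$ and completes the proof. The diagonal entries $H_i(\gamma_i)\neq 0$ are provided by Claim 1 in the proof of Theorem~\ref{chirality}. For the vanishing of $M_{ij}$ with $j>i$, each summand $H_i'(\gamma_s\gamma_j\gamma_s^{-1})$ in the averaged expression must be bounded on powers: the support $\gamma_s(S_{\gamma_j})$ has complexity no greater than that of $S_{\gamma_i}$ by the ordering, and in particular cannot properly contain any member of $\mathbf{Y}_i$; if $\gamma_s(S_{\gamma_j})\notin\mathbf{Y}_i$, Lemma~\ref{bounded.projection} provides a bounded projection to a quasi-axis of $\gamma_i$ and Corollary~\ref{bf+++}(d) applies; if $\gamma_s(S_{\gamma_j})\in\mathbf{Y}_i$, then $\tau_{\gamma_j}\leq\tau_{\gamma_i}$ by the ordering, and Lemma~\ref{brian} yields a bounded projection, with parallelism of the quasi-axes excluded because $[\gamma_i]\neq[\gamma_j]$ forces distinct stable/unstable foliations on a common support orbit (Corollary~\ref{involution}). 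This is precisely the case analysis of Claim 3 in the proof of Theorem~\ref{chirality}. The main obstacle is carrying out this analysis uniformly across every coset representative $\gamma_s$, which is exactly what the complexity/translation-length ordering and the finiteness of $[G:G']$ allow.
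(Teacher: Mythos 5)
Your proposal is correct and is essentially the paper's intended argument: Proposition \ref{4.3} is stated with only the remark ``the arguments above show,'' and what you have done is assemble exactly those arguments --- the identity $H(h)=\Phi(H)(\chi(h))$ coming from homogeneity, conjugacy-invariance, additivity on commuting components and vanishing on achiral ones (the first half of the proof of Theorem \ref{chirality} and the separability discussion), together with the Claim 1/Claim 3 machinery, iterated over the classes ordered by complexity and translation length to produce a triangular, invertible matrix of quasi-morphisms. The only cosmetic deviation is ordering by the translation length of the chosen representatives rather than of their primitive roots, which is immaterial since only the one-sided comparison $\tau_{\gamma_j}\leq\tau_{\gamma_i}$ is needed for Lemma \ref{brian}.
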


Even more generally, let $C_1(G)$ be the vector space of chains
$r_1h_1+\cdots+r_ph_p$ with $r_i\in\R$ and $h_i\in G$. Any
homogeneous quasi-morphism on $G$ extends by linearity to
$C_1(G)$ and there is a linear map $\chi:C_1(G)\to \mathcal X$
defined on the basis by the discussion above. 

\begin{prop}
Let $c_1,\cdots,c_p\in C_1(G)$. The dimension of the space of
functions $\{c_1,\cdots,c_p\}\to\R$ which are restrictions of
homogeneous quasi-morphisms $C_1(G)\to\R$ is equal to the dimension
of the subspace of $\mathcal X$ spanned by $\chi(c_1),\cdots,\chi(c_p)$.
\end{prop}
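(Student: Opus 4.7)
The plan is to reduce to Proposition \ref{4.3} by exploiting linearity. Write each chain as $c_i = \sum_{j=1}^q r_{ij} h_j$ for a common finite collection of group elements $h_1,\ldots,h_q \in G$. Since $\chi: C_1(G) \to \mathcal X$ is linear by definition, $\chi(c_i) = \sum_j r_{ij}\chi(h_j)$; and since any homogeneous quasi-morphism $H$ on $G$ extends linearly to $C_1(G)$, also $H(c_i) = \sum_j r_{ij} H(h_j)$. Hence the two evaluation maps on the space of homogeneous quasi-morphisms,
$$R_h: H \mapsto (H(h_j))_{j=1}^q, \qquad R_c: H \mapsto (H(c_i))_{i=1}^p,$$
are related by $R_c = M \circ R_h$, where $M: \R^q \to \R^p$ is the linear map with matrix $(r_{ij})$.

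Next, I will use that the proof of Proposition \ref{4.3} shows in fact that every homogeneous quasi-morphism factors through $\chi$: there is $\phi_H \in \mathcal X^*$ with $H(g) = \phi_H(\chi(g))$ for every $g \in G$ (this is how the upper bound on dimension is established there). Combined with the dimension equality in Proposition \ref{4.3}, this identifies $\mathrm{image}(R_h)$ with $\{(\phi(\chi(h_j)))_j : \phi \in \mathcal X^*\}$. Applying $M$ and using $\sum_j r_{ij}\phi(\chi(h_j)) = \phi(\chi(c_i))$, one obtains
$$\mathrm{image}(R_c) \; = \; \{(\phi(\chi(c_i)))_{i=1}^p : \phi \in \mathcal X^*\}.$$

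The dimension of this image is then computed by a short linear-algebra argument. The map $\mathcal X^* \to \R^p$, $\phi \mapsto (\phi(\chi(c_i)))_i$, factors as the restriction $\mathcal X^* \twoheadrightarrow W^*$ followed by the evaluation $W^* \to \R^p$, where $W = \mathrm{span}(\chi(c_1),\ldots,\chi(c_p))$. The restriction is surjective (every linear functional on a subspace extends), and the evaluation is injective (because $\chi(c_1),\ldots,\chi(c_p)$ span $W$), so the composition has image of dimension $\dim W$, as required. The only substantive input is Proposition \ref{4.3}; the passage from group elements to arbitrary real chains is formal once the factorization $H = \phi_H \circ \chi$ is in hand, so I do not foresee a serious obstacle.
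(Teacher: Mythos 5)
Your proposal is correct and follows essentially the route the paper intends: the paper gives no separate argument for this proposition, presenting it as the formal linear extension of Proposition \ref{4.3}, and your reduction via $R_c=M\circ R_h$ together with the factorization $H=\phi_H\circ\chi$ (which indeed holds, since homogeneous quasi-morphisms are conjugation-invariant, vanish on achiral components, and are additive on the commuting pure components of a suitable power) is exactly that extension. The concluding linear-algebra step identifying $\dim\{(\phi(\chi(c_i)))_i:\phi\in\mathcal X^*\}$ with $\dim\operatorname{span}(\chi(c_1),\dots,\chi(c_p))$ is also fine.
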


\subsection{Lower bound to $\scl$}

\begin{prop}\label{scl.bound}
Let $G$ be a finite index subgroup of $MCG(\S)$.
There is a constant $\epsilon=\epsilon(G,\S)>0$ so that the following holds.
Let $g\in G$ be any
element. If $scl_G(g)>0$ there is a homogeneous quasi-morphism
$H:G\to\R$ such that $\frac{H(g)}{2\Delta(H)}\geq\epsilon$.
\end{prop}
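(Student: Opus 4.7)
The plan is to revisit the construction of the quasi-morphism $H$ used in the proof of Theorem \ref{chirality} and verify that every constant entering the ratio $|\hat H(g)|/\Delta(\hat H)$ can be chosen uniformly in $G$ and $\Sigma$; by Proposition \ref{p:scl}(ii) this will yield the claim. First I would check that the Nielsen--Thurston normalization power $N$ producing $g^N = g_1\cdots g_m\delta_1^{n_1}\cdots\delta_p^{n_p}$ with all components arotational and lying in $G' := G\cap\mathcal S$ can be chosen uniformly in $G$ and $\Sigma$: each condition amounts to trivializing a finite action (on curves, on subsurfaces, on prong directions, or on the finite quotient $MCG(\Sigma)/G'$) through a group of uniformly bounded order, so in particular the index $s := [G:G']$ is uniform.

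Having chosen an essential chiral class $\{g_{i_1},\ldots,g_{i_p}\}$ of maximal complexity and maximal primitive translation length as in the proof of Theorem \ref{chirality}, I would run the same construction but base Corollary \ref{bf+++} on $\alpha := h_{i_1}^{N_0}$ rather than on $g_{i_1}$, where $h_{i_1}$ is the primitive root of $g_{i_1}$ and $N_0$ is the least integer making $\alpha\in\mathcal S$, $\tau_\alpha\geq R$, and ensuring that part (d) of Corollary \ref{bf+++} eliminates contributions from all other (smaller-or-equal complexity and translation length) equivalence classes. By the Masur--Minsky lower bound $\tau_{h_{i_1}}\geq \epsilon_\Sigma>0$ and Remark \ref{bf++++}, this $N_0$ is uniformly bounded. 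Corollary \ref{bf+++} yields $H':G'\to\R$ of defect $\leq 12$ and $\hat H'(\alpha)\geq \tfrac12$, and averaging over the $s$ coset representatives of $G/G'$ produces a homogeneous quasi-morphism $\hat H:G\to\R$ of defect at most $24 s$.

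The main quantitative step is the uniform lower bound on $|\hat H(g)|$. Writing each component $g_{i_j} = h_{i_j}^{k_{i_j}}$ with $h_{i_j}$ primitive, chirality and the uniqueness of primitive roots for pseudo-Anosov classes force each $h_{i_j}$ to be $G$-conjugate to $h_{i_1}$ in the orientation-preserving sense, so the $k_{i_j}$ are positive integers; the essentiality condition $\sum_j 1/m_j\neq 0$ then becomes $\sum_j k_{i_j}\geq 1$. Conjugation invariance of $\hat H$ gives $\hat H(h_{i_j}) = \hat H(h_{i_1})$, while the chirality analyses of Claims 1 and 3 in the proof of Theorem \ref{chirality} show that every coset contribution to $\hat H(h_{i_1})$ is nonnegative with the trivial coset contributing $\hat H'(h_{i_1}) = \hat H'(\alpha)/N_0 \geq 1/(2N_0)$, and that pure components of $g^N$ lying in other equivalence classes contribute zero. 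Therefore $\hat H(g^N) = \hat H(h_{i_1})\sum_j k_{i_j}\geq 1/(2N_0)$, so $|\hat H(g)| = |\hat H(g^N)|/N \geq 1/(2 N_0 N)$, and $|\hat H(g)|/(2\Delta(\hat H))\geq \epsilon(G,\Sigma)>0$.

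The hardest point will be the uniform positivity bookkeeping for $\hat H(h_{i_1})$: verifying that chirality really rules out flips of the stable/unstable foliation pair inside the stabilizer of $S_{i_1}$, so that each parallel-axis coset contribution is nonnegative rather than merely bounded. The case in which the chosen essential class consists of powers of Dehn twists is handled identically via the annular version of the quasi-tree $\cC(\bY)$ used at the end of the proof of Theorem \ref{chirality}.
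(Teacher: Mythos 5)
Your overall strategy is the paper's: uniformize every constant in the proof of Theorem \ref{chirality} (bounded normalization power, bounded index of $G'=G\cap\mathcal S$, Corollary \ref{bf+++} with defect $\leq 12$ and $\hat H'\geq\frac12$ applied to a bounded power of a root of $g_{i_1}$, coset averaging, sign coherence from chirality, vanishing on the other classes by maximality and Lemma \ref{brian}). The place where your argument breaks is the integrality step, i.e.\ the sentence ``chirality and the uniqueness of primitive roots for pseudo-Anosov classes force each $h_{i_j}$ to be $G$-conjugate to $h_{i_1}$ in the orientation-preserving sense, so the $k_{i_j}$ are positive integers.'' Neither half of this is true in general. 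The roots of a pseudo-Anosov class all lie in $Stab(\hat{\mathcal F}_1)$, which is only virtually cyclic: there is an exact sequence $1\to K\to Stab(\hat{\mathcal F}_1)\to\Z\to 1$ with $K$ finite, and when $K\neq 1$ primitive roots are not unique (e.g.\ if $x\in K$ has order $2$ and commutes with a primitive $f$, both $f$ and $fx$ are primitive roots of $f^2$), nor need the primitive roots of different components of the class be conjugate in $G$. Moreover chirality of each $g_{i_j}$ only forbids $g_{i_j}\sim g_{i_j}^{-1}$; it does not prevent one component from being conjugate to a positive power of the common root and another to a negative power (that mixed-sign configuration is exactly the Endo--Kotschick inessential class, which is chiral whenever $h\not\sim h^{-1}$). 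So your reduction of essentiality to $\sum_j k_{i_j}\geq 1$ with all $k_{i_j}>0$ is not justified, and without a common root the quantity $\sum_j 1/m_j$ is merely a nonzero rational, which gives no uniform lower bound.

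The paper repairs exactly this point in Step 2 of its proof: it chooses $\hat h\in Stab(\hat{\mathcal F}_1)$ mapping to the generator of $\Z$ (stretch factor) and uses that $|K|$ is bounded in terms of $\Sigma$ together with the fact that two elements of $Stab(\hat{\mathcal F}_1)$ with the same image in $\Z$ have equal $|K|$-th powers; hence after replacing $g$ by a further \emph{uniformly} bounded power, every $g_{i_j}$ is conjugate to $h^{n_{i_j}}$ for a single $h$ (arotational, in $G'$ after a bounded power), and essentiality then gives the integer bound $|n_{i_1}+\cdots+n_{i_p}|\geq 1$, with sign coherence of the coset contributions coming from chirality as you describe. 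If you insert this virtually-cyclic-stabilizer argument in place of the ``unique primitive root'' claim (and note that you only need $|\sum_j n_{i_j}|\geq 1$, not positivity of each exponent), the rest of your outline goes through; the remaining discrepancies (your defect bound $24s$ omits the extra factor from extending $\hat H''$ from $G'$ to $G$) are harmless since any bound uniform in $(G,\Sigma)$ suffices.
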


As a consequence, by 
Proposition \ref{p:scl}(iii)),
we have that $\scl_G(g)>0$ implies $\scl_G(g)\geq\epsilon$.

\begin{proof}
The argument is same as the one for Theorem \ref{chirality}, but 
we need to bound constants carefully. 
We only give an outline of the argument. In the special case $G=MCG(S)$
and $g\in G$ pseudo-Anosov class this statement was proved by
Calegari-Fujiwara in \cite{calegari-fujiwara}. 
As before, set $G'=G\cap\mathcal S$.

{\bf Step 1.}  In the course of the proof of Theorem \ref{chirality},
we replaced a given element $g$ by an arotational power $g^N$. We note
that the power can be taken to be uniformly bounded.  This follows
from the fact that the number of curves in a $g$-invariant pairwise
disjoint collection is uniformly bounded, the number of complementary
components is uniformly bounded, the number of singularities of the
stable foliation and the number of prongs is uniformly bounded at any
point. Moreover, after taking a further bounded power, the component
maps $g_i$ in the Nielsen-Thurston decomposition are in $G'$.  We will
also rename $g^N$ as $g$.

{\bf Step 2.} Since $scl(g)>0$, $g$ has an essential chiral class.
Among all essential chiral classes we choose one, say
$g_1,\cdots,g_p$, where the complexity (absolute value of the Euler
characteristic) of the supporting subsurface of each map in the class
is largest possible, and among these, we arrange that the primitive
root of $g_1$ has maximal translation length. Let $\hat{\mathcal F_1}$
be the stable foliation of $\hat g_1$ on the associated punctured
surface $\hat S_1$. We have a short exact sequence
$$1\to K\to Stab(\hat{\mathcal F_1})\to \Z\to 1$$
where the map to $\Z\cong \{\lambda^n\mid n\in\Z\}\subset \R^+$ is
given by the stretch factor, and $K$ is a finite group whose size is
bounded in terms of $\S$. We now use the following fact from group
theory: if two elements $\phi,\psi\in Stab(\hat{\mathcal F_1})$ have
the same image in $\Z$ then their powers $\phi^r,\psi^r$ are equal for
$r=|K|$. Let $\hat h$ denote an element of $Stab(\hat{\mathcal F_1})$ that
maps to $1\in\Z$. Then a certain bounded power $\hat h^N$ is arotational
and lifts to a homeomorphism $h$ on $S_1$ and extends by the identity to
$\Sigma$.
By taking a uniformly bounded power if necessary,
we also assume that $h \in G'$.
Putting all this together, we deduce that, perhaps after
replacing $g$ with a bounded power, for some integers
$n_1,\cdots,n_p$ each $g_i$ is conjugate to $h^{n_i}$. That the class
is essential implies that $n_1+\cdots+n_p\neq 0$.

{\bf Step 3.} We now construct $H':G'\to\R$ using the action
of $G'$ on
the space $X=\mathcal C(\bY)$, where $\bY$ is the $G'$-orbit of
the subsurface $S_1$ supporting $h$. To do this, we apply Corollary
\ref{bf+++} to a uniformly bounded power of $h$, thus:
\begin{itemize}
\item the defect $\Delta(H')$ is at most $12$,
\item $\hat H'(h)\geq\epsilon>0$.
\end{itemize}

Also note that $\Delta(\hat
H')\leq 4\Delta(H')\leq 48$ (from the definition $\hat
H'(\gamma)=\lim_{n\to\infty} H'(\gamma^n)/n$ we see that $|H'-\hat
H'|\leq \Delta(H')$ and the statement follows from the triangle inequality). 

{\bf Step 4.} Now let $\gamma_i\in G$ be the coset representatives of
$G/G'$ and define $\hat H'':G'\to\R$ by $\hat H''(\gamma)=\sum_i \hat
H'(\gamma_i\gamma\gamma_i^{-1})$. $\hat H''$ is homogeneous. Then
\begin{itemize}
\item $\Delta(\hat H'')$ is bounded above (by $[G:G']\Delta(\hat H')$,
  and $[G:G']\leq [MCG(S):\mathcal S]$),
\item $|\hat H''(h)|$ is bounded away from 0 (no two summands have
  opposite sign, by chirality). 
\end{itemize}
Then $\hat H''$ extends to a homogeneous quasi-morphism on $G$ (via
$\hat H(\gamma)=\hat H(\gamma^n)/n$ for $n=[G:G']$) and $\Delta(\hat
H)\leq [G:G']\Delta(\hat H'')$, see \cite[Lemma 7.2]{rank1}. 
It remains to show that $|\hat H(g)|$ is bounded away from 0. First,
$$\hat H(g_1\cdots g_p)=\hat H(g_1)+\cdots+\hat
H(g_p)=(n_1+\cdots+n_p)\hat H(h)$$ and since $|n_1+\cdots+n_p|\geq 1$
and $\hat H(h)=\hat H''(h)$ we see that $|\hat H(g_1\cdots g_p)|$ is
bounded away from 0. It now suffices to note that for $j>p$
$\hat H(g_j)=0$ whenever $g_j$ belongs to an essential
  chiral class
\end{proof}

\begin{remark}
Let $\S_h$ be the closed surface of genus $h$.  For $G=MCG(\S_g)$ it
would be interesting to know how $s_h:=\inf \{scl(g)\mid g\in G,
scl(g)>0\}$ behaves when the genus $h\to\infty$. On the plus side,
subsurface projection constants are uniform (see Leininger's proof in
\cite{mangahas,mangahas2}) and so is the hyperbolicity constant
$\delta$ of curve graphs, see \cite{bhb,hpw,crs}. The acylindicity
constants in Lemma \ref{brian} are known explicitly \cite{webb},
translation lengths in curve complexes are not uniform, but the
asymptotics is understood \cite{gadre-tsai}. The main deficiency of
our argument is that it passes to the subgroup $\mathcal S$, whose
index goes to $\infty$. There is a case where this can be avoided,
namely when the genus $h=2m$ is even and $g$ is the Dehn twist in a
curve that separates $\S_h$ into two subsurfaces of genus $m$. Then
all of $MCG(\S_{h})$ acts on $\mathcal C(\bY)$, where $\bY$ is the
$MCG(\S_{h})$-orbit of annuli containing the support of $g$. We
conclude that $scl(g)>\epsilon>0$ independently of $h=2m$. This
implies that $scl$ of a boundary Dehn twist is uniformly bounded below
(in fact it is $\frac 12$ by \cite{baykur}).
\end{remark}

\subsection{A bound on the commutator length}

\begin{prop}\label{cl.bounded}
For a finite index subgroup $G<MCG(S)$ and $g\in [G,G]$, $\scl(g)=0$
if and only if $cl(g^n)$ is bounded for $n\in\Z$. Moreover, there are
numbers $B=B(G)$ and $N=N(G)>0$ such that for every $g\in [G,G]$ with
$scl(g)=0$, $g^N$ can be written as a product of $B$ commutators.
\end{prop}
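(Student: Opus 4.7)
The easy direction, $cl(g^n)$ uniformly bounded implies $\scl(g)=0$, is immediate from $\scl(g) = \liminf_n cl(g^n)/n$. For the converse I plan to show that there are uniformly bounded $N = N(G)$ and $B = B(G)$ such that $g^{\ell N}$ is a product of at most $B$ commutators for every $\ell \ge 1$. This gives the ``moreover'' clause directly, and combined with $cl(g^n) \le cl(g^{\ell N}) + cl(g^r) \le B + (N-1)\,cl(g)$ for the decomposition $n = \ell N + r$ with $0 \le r < N$, also yields the uniform bound on $cl(g^n)$.

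The first step is exactly Steps 1--2 of the proof of Proposition~\ref{scl.bound}: replace $g$ by a uniformly bounded power $g^N$ that admits an arotational Nielsen--Thurston decomposition $g^N = g_1 \cdots g_m \delta_1^{n_1} \cdots \delta_p^{n_p}$ into commuting pure components lying in $G' = G \cap \mathcal{S}$. By Theorem~\ref{chirality}, the assumption $\scl(g) = 0$ forces every chiral equivalence class to be inessential. Group the pure components into equivalence classes; distinct classes have pairwise disjoint supports, hence commute with each other, so it suffices to bound $cl$ on each class separately and add.

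An achiral component $g_i$ satisfies $g_i^{k} = x g_i^{-k} x^{-1}$ for a uniformly bounded $k$, whence $g_i^{2k\ell} = [g_i^{k\ell}, x]$ has $cl \le 1$ for every $\ell$. The main work is in the inessential chiral case, where the data are commuting pure components $g_1, \ldots, g_q$ together with $y_j \in G$ and nonzero $m_j \in \Z$ with $\sum 1/m_j = 0$ and $g_j^{m_j} = y_j h y_j^{-1}$ for $h := g_1^{m_1}$ (so $y_1 = 1$). Setting $M = \mathrm{lcm}(|m_j|)$, $k_j = M/m_j$, and $a_j := g_j^{\ell M} = y_j h^{\ell k_j} y_j^{-1}$, the identity
\[
a_j \;=\; [y_j, h^{\ell k_j}] \cdot h^{\ell k_j}
\]
expresses each $a_j$ as a commutator times a power of $h$. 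Since $h$ is supported on $g_1$'s subsurface, it commutes with every $a_l$ (trivially for $l=1$; by disjointness of Nielsen--Thurston supports otherwise), hence with every $h^{\ell k_l}$ and every $[y_j, h^{\ell k_j}] = a_j h^{-\ell k_j}$. Rearranging these $2q$ pairwise-commuting factors, using $\sum k_j = 0$ and $[y_1, h^{\ell k_1}] = 1$, gives
\[
a_1 \cdots a_q \;=\; \prod_{j=1}^q [y_j, h^{\ell k_j}] \cdot h^{\ell \sum_j k_j} \;=\; \prod_{j=2}^q [y_j, h^{\ell k_j}],
\]
a product of $q-1$ commutators, uniformly in $\ell$. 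The analogous statement for inessential chiral classes of Dehn twist powers is proved identically, taking $h$ to be a power of one Dehn twist in the class.

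The main obstacle in the plan is setting up the algebraic lemma above correctly: one must choose $h$ inside the support of a single component $g_1$ so that $h$ centralizes every $a_l$, which is precisely what makes the pairwise-commutation rearrangement valid — this is where the argument uses the mapping class group structure (disjoint-support pure components commute), rather than being a general group-theoretic fact. Summing the class-by-class bounds yields a uniform $B = B(G)$, with $N$ the lcm of the achiral exponents $2k$ and the class exponents $M$; both are uniformly bounded by the topological complexity of $\Sigma$ and the index $[MCG(\Sigma):\mathcal{S}]$, completing the plan.
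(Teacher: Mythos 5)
Your overall architecture matches the paper's: pass to a uniformly bounded arotational power with pure components in $G'=G\cap\mathcal S$ (Step 1 of Proposition \ref{scl.bound}), invoke Theorem \ref{chirality} to conclude every chiral class is inessential, dispose of achiral components with a single commutator, and handle each inessential chiral class with roughly $q-1$ commutators, adding over classes. The achiral step and the commutation bookkeeping inside a class (using that $h$ is supported on $S_1$ and hence centralizes all the $a_l$) are fine.

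The genuine gap is in your choice of $h:=g_1^{m_1}$ and the resulting claim that $N$ can be taken to involve $M=\mathrm{lcm}(|m_j|)$ and still be ``uniformly bounded by the topological complexity of $\Sigma$ and the index.'' That last claim is false: the exponents $m_j$ making the $g_j^{m_j}$ conjugate depend on $g$, not just on $G$ and $\Sigma$, and they are unbounded. For instance, take $g=\delta_1^{\,b+c}\,\delta_2^{-b}\,\delta_3^{-c}$ with $\delta_i$ Dehn twists in disjoint curves in one $G$-orbit and $b,c$ coprime: the minimal exponents are $m_1=L/(b+c)$, $m_2=-L/b$, $m_3=-L/c$ with $L=\mathrm{lcm}(b+c,b,c)$, the class is inessential since $\sum 1/m_j=0$, yet $M\to\infty$ as $b,c\to\infty$. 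Your argument only controls $cl$ of powers of the class product divisible by $M$, so it proves the first (``bounded for this $g$'') statement but not the ``moreover'' clause with $N=N(G)$, which is the substance of the proposition. The repair — and this is exactly what the paper does — is to work with conjugate \emph{roots} rather than conjugate \emph{powers}: by Step 2 of Proposition \ref{scl.bound} one may write $g_i=h_i^{n_i}$ with the $h_i$ pairwise conjugate, say $h_i=\gamma_i h_{i-1}\gamma_i^{-1}$, and inessentiality becomes $\sum n_i=0$; then the telescoping identity
$$g_1\cdots g_q=[h_1^{n_1},\gamma_2][h_2^{n_1+n_2},\gamma_3]\cdots[h_{q-1}^{n_1+\cdots+n_{q-1}},\gamma_q]$$
(or your own rearrangement with $h$ taken to be the common root instead of $g_1^{m_1}$) expresses the class product, and likewise every power $\ell$ of it upon replacing $n_i$ by $\ell n_i$, as $q-1$ commutators with no divisibility restriction, so the only power of $g$ ever taken is the uniformly bounded one from Step 1.
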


\begin{proof}
First note that the second statement implies the first, for if $g$ is
a product of $K=K(g)$ commutators and $g^{iN}$ is a product of $B$
commutators for every $i$, then every power of $g$ is a product of
$B+(N-1)K$ commutators.

If $g$ is achiral, i.e. if $\gamma g\gamma^{-1}=g^{-1}$ for some
$\gamma\in G$, then $\gamma g^k\gamma^{-1}=g^{-k}$, so
$g^{2k}=g^kh^{-1}g^{-k}h=[g^k,h^{-1}]$ is a single commutator.

If $g=g_1\cdots g_p$ is a single inessential chiral class, write
$g_i=h_i^{n_i}$ with all $h_i$ conjugate, say $h_i=\gamma_i h_{i-1}
\gamma_i^{-1}$ for $i=2,3,\cdots,p$. Then 
$$g=h_1^{n_1}h_2^{n_2}\cdots h_p^{n_p}$$
with $\sum n_i=0$. Thus we can write $g$ as a product of $(p-1)$
commutators:
$$g=[h_1^{n_1},\gamma_2][h_2^{n_1+n_2},\gamma_3]\cdots 
[h_{p-1}^{n_1+n_2+\cdots+n_{p-1}},\gamma_p]$$
\noindent
and note that $p$ is uniformly bounded by the topology of $\Sigma$.

Now $scl(g)=0$ implies, according to Theorem \ref{chirality}, that
some nontrivial power $g^k$ can be written as a commuting product of a
bounded number of achiral elements and inessential classes of chiral
elements. The claim follows from the observation that the power $k$ is
bounded in terms of $\Sigma$ and $G$ (see Step 1 in the proof of
Proposition \ref{scl.bound}).
\end{proof}

\subsection{Restrictions to subsurfaces}

Recall that for a group $G$ the vector space of all quasi-morphisms
modulo homomorphisms plus bounded functions is denoted by
$\widetilde{QH}(G)$ and this vector space is naturally isomorphic to
the kernel of the comparison map $H^2_b(G;\R)\to H^2(G;\R)$ from the
bounded to the regular cohomology of $G$. Also recall that when
$\Sigma$ supports pseudo-Anosov homeomorphisms the space
$\widetilde{QH}(MCG(\Sigma))$ is infinite dimensional \cite{bf:gt}.

\begin{prop}
Let $S\subset \S$ be a subsurface that supports pseudo-Anosov
homeomorphisms. Then the restriction map
$$\widetilde{QH}(MCG(\S))\to\widetilde{QH}(MCG(S))$$
has infinite dimensional image.
\end{prop}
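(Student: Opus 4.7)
The strategy is to construct infinitely many homogeneous quasi-morphisms on $MCG(\Sigma)$ whose restrictions to $MCG(S)$ remain linearly independent modulo bounded functions (and hence modulo bounded functions plus homomorphisms, since the space of homomorphisms $MCG(S)\to\R$ is finite dimensional). These will be produced via Proposition \ref{4.3} applied to a carefully chosen infinite sequence of chiral pseudo-Anosov elements supported on $S$.

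First I would produce a sequence $g_1,g_2,\ldots\in MCG(S)$ of chiral pseudo-Anosov classes on $S$, viewed as elements of $MCG(\Sigma)$ supported on $S$, that are pairwise inequivalent in the sense of the paper, i.e. no nontrivial power of $g_i$ is conjugate in $MCG(\Sigma)$ to a power of $g_j$ for $i\neq j$. The key observation is that such an equivalence $g_i^m\sim g_j^n$ in $MCG(\Sigma)$ forces the stretch factors to satisfy $\lambda(g_i)^m=\lambda(g_j)^n$, so it suffices to choose the $g_i$ so that their stretch factors have $\mathbb{Q}$-linearly independent logarithms. Since $MCG(S)$ contains countably many pseudo-Anosov conjugacy classes realizing countably many stretch factors, such a sequence can be extracted. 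Chirality of each $g_i$ in $MCG(\Sigma)$ can be arranged simultaneously: achirality would require some $\gamma\in MCG(\Sigma)$ to conjugate $g_i$ to $g_i^{-1}$ (and necessarily preserve $S$), a specific symmetry condition that fails generically and can in any case be broken by a small modification such as replacing $g_i$ by $g_i\cdot [f,g_i]$ for an auxiliary pseudo-Anosov $f$ on $S$.

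Next, for each $n$, I would apply Proposition \ref{4.3} with $G=MCG(\Sigma)$ and $h_i=g_i$ for $1\leq i\leq n$. Since each $g_i$ is a chiral pure pseudo-Anosov class and the classes are pairwise inequivalent, the vectors $\chi(g_1),\ldots,\chi(g_n)\in\mathcal X$ are linearly independent basis elements. Proposition \ref{4.3} then yields homogeneous quasi-morphisms $\hat H_1,\ldots,\hat H_n:MCG(\Sigma)\to\R$ with $\hat H_i(g_j)=\delta_{ij}$ after a suitable change of basis.

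To conclude, I would check that the restrictions $\hat H_i|_{MCG(S)}$ are linearly independent modulo bounded functions on $MCG(S)$. If $\sum_{i=1}^n a_i \hat H_i|_{MCG(S)}=b$ with $b$ bounded, then evaluating at $g_j^N$ and using homogeneity gives $N a_j = b(g_j^N)$, which is bounded in $N$, forcing $a_j=0$. Because the space of homomorphisms $MCG(S)\to\R$ is finite dimensional, linear independence modulo bounded functions implies linear independence in $\widetilde{QH}(MCG(S))$, and letting $n\to\infty$ shows the image is infinite dimensional. The main obstacle is the construction in Step 1: ensuring chirality and pairwise inequivalence in the ambient group $MCG(\Sigma)$ rather than merely in $MCG(S)$. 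The stretch-factor argument handles inequivalence cleanly, and chirality is a generic condition that is easy to arrange.
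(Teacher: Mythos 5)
Your overall skeleton is fine and is in fact the paper's own second route: the paper proves this proposition by taking a sequence $f_1,f_2,\dots$ of chiral, pairwise inequivalent pseudo-Anosov classes on $S$ and either running the proof of Theorem \ref{chirality} to get quasi-morphisms $H_i$ unbounded on powers of $f_i$ and vanishing on powers of $f_j$, $j<i$, or simply invoking Proposition \ref{4.3}; your dual-basis argument via Proposition \ref{4.3}, the evaluation on $g_j^N$, and the reduction modulo the finite-dimensional space of homomorphisms are all correct. The problem is that the entire content of the statement is concentrated in your Step 1, and that step is asserted rather than proved. First, pairwise inequivalence: from the countability of the set of stretch factors you cannot conclude that infinitely many of them have pairwise irrational log-ratios (let alone $\mathbb{Q}$-linearly independent logarithms); a priori, as far as your argument goes, all stretch factors could lie in finitely many multiplicative commensurability classes (note that every power $\lambda^n$ of a stretch factor is again a stretch factor, so commensurability classes are infinite). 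Some genuine input is needed here, and none is given.

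Second, and more seriously, chirality in $MCG(\Sigma)$ is not a condition that ``fails generically and is easy to arrange.'' Achiral pseudo-Anosovs do exist (for instance $t_at_b^{-1}$ from Thurston's construction is conjugate to its inverse whenever some mapping class swaps $a$ and $b$), and verifying chirality of any specific element requires an argument; this is exactly the nontrivial point. Your proposed repair, replacing $g_i$ by $g_i[f,g_i]$, comes with no justification that the new element is pseudo-Anosov, that it is chiral, or that the family remains pairwise inequivalent (its stretch factor changes uncontrollably, so your Step 1 bookkeeping would have to be redone). The paper sidesteps all of this by citing the construction in \cite{bf:gt}, which produces precisely such an infinite sequence of chiral, pairwise inequivalent pseudo-Anosov classes on $S$ using the WPD/bounded projection technology; without that input, or a substitute argument for it, your proof does not go through.
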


\begin{proof}
Recall that \cite{bf:gt} produces a sequence $f_1,f_2,\cdots$ of
chiral, pairwise inequivalent pseudo-Anosov homeomorphisms on $S$. The
proof of the main theorem gives, for every $i$, a quasi-morphism
$H_i:MCG(\Sigma)\to\R$ which is unbounded on the powers of $f_i$, and
0 on all powers of $f_j$ for $j<i$. The statement also follows from
Proposition \ref{4.3}.
\end{proof}

\section{Example: Level subgroups and the Torelli group}\label{section.example}

We start by looking at the level subgroup $G=\Gamma_p$ for $p\geq 3$
consisting of mapping classes that act trivially in
$H_1(\Sigma;\Z_p)$. Recall the theorem of Ivanov \cite[Theorem 1.7]{ivanov} that
every $f\in G$ fixes the punctures, the Nielsen-Thurston reducing
curves are each invariant, and the restriction of $f$ to a
complementary component (after collapsing boundary to punctures) is
either identity or pseudo-Anosov. 
If at least one of them is pseudo-Anosov, we say $f$ has {\em exponential growth}.

Recall that two simple closed curves in $\Sigma$ are homologous over
$\Z_p$ if and only if either both are separating, or cobound a
subsurface (with compatible boundary orientation).

The following Lemma is left as an exercise (find a non-separating 
loop $b$ such that $f(b)$ and $b$ are not homologous if $i(a,f(a))=0$
and $f(a) \not=a$).

\begin{lemma}
If $f\in G$ and $a$ is a separating curve then either $f(a)=a$
  or $i(a,f(a))>0$, and in the former case $f$ preserves the
  orientation of $a$.\qed
\end{lemma}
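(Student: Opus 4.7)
My plan is to argue both assertions by contradiction, following the hint. In each putative failure I will find a pair of simple closed curves $b,c$ in one side of $a$, call it $\bar\Sigma_2$, with algebraic intersection $b\cdot c=1$, such that $f(b)$ lies in the opposite side $\bar\Sigma_1$; then $f(b)$ and $c$ are geometrically disjoint so $[f(b)]\cdot[c]=0$, while $f\in G$ forces $[f(b)]\equiv[b]\pmod{p}$ in $H_1(\Sigma;\Z_p)$, giving $1\equiv 0\pmod{p}$---a contradiction. The key external input, used to arrange $\mathrm{genus}(\bar\Sigma_2)\geq 1$ so that a dual pair $(b,c)$ exists in the interior, is Ivanov's theorem recalled just above the lemma, which guarantees that $f$ fixes every puncture of $\Sigma$.

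For the first assertion, suppose $f(a)\neq a$ and $i(a,f(a))=0$, and take disjoint representatives. Write $\Sigma=\bar\Sigma_1\cup_a\bar\Sigma_2$, and after relabeling assume $f(a)\subset\bar\Sigma_1$; then $f(a)$ splits $\bar\Sigma_1$ further as $R\cup_{f(a)}\bar\Sigma_1'$, where $R$ is the cobordism with $\partial R=a\sqcup f(a)$. Non-isotopy of $a,f(a)$ forces $R$ to have genus $h\geq 1$. Since $f(\bar\Sigma_2)$ is a subsurface of $\Sigma$ with boundary $f(a)$, it equals either $\bar\Sigma_1'$ or its complement $\bar\Sigma_2\cup R$; the second option would require $\mathrm{genus}(\bar\Sigma_2)=\mathrm{genus}(\bar\Sigma_2)+h$, which is absurd, so $f(\bar\Sigma_2)=\bar\Sigma_1'$. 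Any puncture in $\bar\Sigma_2$ would be $f$-fixed yet forced into the disjoint set $\bar\Sigma_1'$, so $\bar\Sigma_2$ is puncture-free; and since $a$ is essential, $\bar\Sigma_2$ is not a disc. Hence $\mathrm{genus}(\bar\Sigma_2)\geq 1$ and we can choose $b,c\subset\bar\Sigma_2$ as in the plan, completing the contradiction.

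For the orientation claim, assume $f(a)=a$. Since $f$ preserves orientation of $\Sigma$, it preserves the orientation of $a$ if and only if it preserves each side of $a$ setwise. In the remaining case $f$ swaps $\bar\Sigma_1$ and $\bar\Sigma_2$; then $\bar\Sigma_1\cong\bar\Sigma_2$, and Ivanov's puncture-fixity again forces both sides to be puncture-free (else a puncture in one side would be carried to the other), so each has genus $\geq 1$. Picking dual curves $b,c\subset\bar\Sigma_1$ with $b\cdot c=1$ and noting that $f(b)\subset\bar\Sigma_2$ is disjoint from $c$, the same scheme yields the contradiction and rules out this case.

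The main obstacle is the topological bookkeeping in Part~1: ruling out $f(\bar\Sigma_2)=\bar\Sigma_2\cup R$ via the genus count, and ensuring $\mathrm{genus}(\bar\Sigma_2)\geq 1$ so that a dual pair $(b,c)$ exists in its interior. Both steps rely crucially on Ivanov's puncture-fixing property (inherited from $f\in G$) and on $a$ being essential.
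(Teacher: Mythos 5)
Your strategy is exactly the one the paper intends with its hint: produce a nonseparating curve $b$ on one side of $a$ together with a dual curve $c$ having algebraic intersection $1$ with $b$, and use that $f(b)$ can be made disjoint from $c$ to contradict $[f(b)]=[b]$ in $H_1(\Sigma;\Z_p)$. Your treatment of the orientation statement is also correct: since $f$ preserves the orientation of $\Sigma$, reversing the orientation of $a$ would force $f$ to swap the two sides of $a$, and you eliminate that case by the same pairing argument (when $\Sigma$ has punctures it is already eliminated by puncture-fixity).

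The one step that fails as written is the claim that non-isotopy of $a$ and $f(a)$ forces the cobounding region $R$ to have genus $h\geq 1$. The paper allows $\Sigma$ to have punctures, and then $R$ can be a genus-zero surface containing one or more punctures (for instance a pair of pants whose third end is a puncture), so with $h=0$ your genus identity $\mathrm{genus}(\bar\Sigma_2)=\mathrm{genus}(\bar\Sigma_2)+h$ no longer rules out the option $f(\bar\Sigma_2)=\bar\Sigma_2\cup R$. The conclusion you want, $f(\bar\Sigma_2)=\bar\Sigma_1'$, is still true, but it should be justified differently: since $a$ and $f(a)$ are not isotopic, $R$ is not an annulus, hence $\chi(R)\leq -1$ and $\chi(\bar\Sigma_2\cup R)<\chi(\bar\Sigma_2)$, so $\bar\Sigma_2\cup R$ cannot be homeomorphic to $f(\bar\Sigma_2)\cong\bar\Sigma_2$; alternatively, compare the number of punctures, which $f$ fixes by Ivanov's theorem, exactly as you do elsewhere. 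With that repair the remainder of your argument (puncture-freeness of $\bar\Sigma_2$, hence genus at least $1$, the dual pair $(b,c)$, and the contradiction $1\equiv 0\pmod p$) goes through.
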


\begin{lemma}
If $S$ is a subsurface of $\Sigma$ which is not an annulus and $f\in
G$ then $f(S)\cap S\neq\emptyset$.
\end{lemma}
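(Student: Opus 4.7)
The strategy is contradiction: suppose $S$ and $f(S)$ can be isotoped so as to be disjoint, and derive a violation of $f\in G$ using mod $p$ intersection numbers. First I reduce the boundary of $S$. If some component $c\subset\partial S$ were separating in $\Sigma$, the preceding lemma would force $f(c)=c$, whence $c\subset S\cap f(S)$, contradicting disjointness. The same reasoning rules out any essential simple closed curve of $\Sigma$ lying inside $S$ which is separating in $\Sigma$. Thus every boundary component of $S$ is non-separating in $\Sigma$; in particular $\partial S$ has at least two components (a single non-separating curve cannot bound a subsurface), and the class $[c]\in H_1(\Sigma;\Z)$ of each such component is primitive.

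Fix now a component $c\subset\partial S$. Since $f\in G$ one has $[f(c)]\equiv[c]\pmod{p}$, and $c\cap f(c)=\emptyset$ because $f(c)\subset\partial f(S)$. I split on whether $c\cup f(c)$ separates $\Sigma$. If it does not, then $c$ is non-separating in $\Sigma\setminus f(c)$, so there is a simple closed curve $\beta\subset\Sigma\setminus f(c)$ with $\hat\imath(c,\beta)=\pm 1$. But $\hat\imath(f(c),\beta)=0$ since $\beta$ avoids $f(c)$, and $[c]\equiv[f(c)]\pmod{p}$ forces $\hat\imath(c,\beta)\equiv\hat\imath(f(c),\beta)\pmod{p}$, giving $\pm 1\equiv 0\pmod{p}$, absurd. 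Otherwise $c\cup f(c)$ cobounds a subsurface of $\Sigma$, whence $[c]=\pm[f(c)]$ in $\Z$; primitivity of $[c]$ together with $p\geq 3$ being odd rules out the minus sign, so $[c]=[f(c)]$ in $H_1(\Sigma;\Z)$.

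If the cobounding alternative holds for every boundary component of $S$, I use the non-annulus hypothesis to produce an escape curve: when $S$ has positive genus, pick an essential non-separating simple closed curve in the interior of $S$ which is not isotopic into $\partial S$; when $S$ is planar with at least four boundary components, pick another boundary component; the pair-of-pants case is handled similarly using all three components and the homological constraint $[c_1]+[c_2]+[c_3]=0$. By the boundary reduction above every such candidate is non-separating in $\Sigma$, and the dichotomy applies to it. If any lands in the non-separating case, the intersection-number argument above finishes the proof. Otherwise every such curve also cobounds with its image; the resulting configuration of cobounding subsurfaces, combined with $S\cap f(S)=\emptyset$, forces $f(S)$ to be isotopic either to $S$ itself (contradicting disjointness) or to the complementary subsurface $\Sigma\setminus\mathrm{int}(S)$. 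In the latter ``flip'' case $f_*$ has eigenvalue $-1$ on some homology class dual to a component of $\partial S$, contradicting $f\in G$ since $-1\not\equiv 1\pmod{p}$ for $p\geq 3$.

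The main obstacle is this last step: making the flip analysis rigorous, in particular extracting the $-1$ eigenvalue on a transverse homology class when every boundary component cobounds with its image. The rest of the argument is standard mod $p$ bookkeeping, and the fact that $p\geq 3$ enters crucially both in excluding the minus sign in the cobounding case and in distinguishing $-1$ from $+1$ in the flip case.
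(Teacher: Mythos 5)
Your opening reductions are sound and in fact reprove the background fact the paper simply quotes: disjoint curves that are $\Z_p$-homologous and nonseparating must cobound a subsurface (your case (i) dual-curve argument is exactly why), and separating curves in $S$ -- boundary or interior -- are ruled out by the previous lemma. (Two small remarks: since $S$ of positive genus always contains a curve that is separating in $\Sigma$, your own step 1 already kills the positive-genus case, so the ``escape curve in the interior'' discussion is unnecessary; and $p\geq 3$ need not be odd, though the sign exclusion still works since $2[c]\equiv 0 \pmod p$ with $p\geq 3$ contradicts primitivity.)

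The genuine gap is exactly where you flag it: the endgame when every boundary component cobounds with its image. The claim that this ``forces $f(S)$ to be isotopic either to $S$ or to $\Sigma\setminus\mathrm{int}(S)$'' is unjustified and not clearly true -- the cobounding subsurfaces can be large and the complement of $S$ need not resemble $f(S)$ at all -- and the proposed $-1$-eigenvalue conclusion in the ``flip'' case is not extracted from anything. The paper's argument at this point is a short cut-and-paste which you are missing: first note $S$ must be planar with $\Sigma-S$ connected (otherwise $S$ contains a curve separating in $\Sigma$, already excluded), so the \emph{only} homology relation among the boundary classes of $S$ is that their total sum vanishes. Then take two distinct boundary components $a,b$; the subsurface $A$ cobounded by $a\cup f(a)$ containing $S$ and the subsurface $B$ cobounded by $b\cup f(b)$ containing $S$ (here disjointness of $S$ and $f(S)$ is used to pick the correct sides) intersect in a subsurface whose boundary is $a\cup b$, so $[a]+[b]=0$. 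By the ``only relation'' fact this forces $\partial S=a\cup b$, i.e.\ $S$ is an annulus, contradicting the hypothesis. Without this (or some substitute for your flip analysis), your proof does not close; note also that you never establish the ``only relation is the total sum'' fact, which is needed to convert a homology relation among boundary components into the annulus conclusion.
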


\begin{proof}
Assume $f(S)\cap
  S =\emptyset$. 
First, by the previous lemma if the genus of $S$
  is $>0$ then $S$ contains a separating curve which cannot be moved
  off itself (and if fixed the orientation is preserved),
so $f(S)\cap
  S\neq\emptyset$, impossible. 
 Thus $S$ is
  a planar surface. If any of the boundary components are separating,
  they are fixed with the same orientation (otherwise, there will be a non-separating 
curve, in one of the components of the complement of the separating curve, whose homology class is not preserved by $f$), so $f(S)\cap
  S\neq\emptyset$, impossible. Thus they are all nonseparating, and in fact the
  only relation among them in homology is that the sum is 0 (this is equivalent to
  $\Sigma-S$ being connected), or
  otherwise $S$ contains a separating curve .
 Let $a$ be a boundary component, so
  $f(a)$ is a boundary component of $f(S)$. Since $a$ and $f(a)$
are homologous over $\Z_p$, there are two subsurfaces
  cobounded by $a$ and $f(a)$, one contains $S$ and the other contains
  $f(S)$. Denote by $A$ the one that contains $S$ (use $f(S)\cap
  S =\emptyset$). Similarly, choose
  another boundary component $b$ and let $B$ be the subsurface
  cobounded by $b$ and $f(b)$ that contains $S$. Then $A\cap B$ is a
  subsurface with two boundary components $a$ and $b$, showing that
  $a+b=0$ in homology, i.e. $S$ is an annulus.
\end{proof}

\begin{cor}\label{exponential}
Let $g=g_1\cdots g_m\delta_1^{n_1}\cdots \delta_p^{n_p}$ be the
decomposition of $g$ into pure parts as in Section \ref{scl}. Then
every $g_i$ is chiral and forms its own equivalence class.

In particular, every element of $G$ of exponential growth has positive
$scl_G$.
\end{cor}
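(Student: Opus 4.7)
The plan is to establish two facts about every pseudo-Anosov pure part $g_i$ of $g$: (i) the $G$-equivalence class of $g_i$ among the pure parts of $g$ is the singleton $\{g_i\}$, and (ii) $g_i$ is chiral in $G$. Granted both, any $g\in G$ of exponential growth has at least one pseudo-Anosov pure part $g_i$, and the singleton chiral class $\{g_i\}$ is automatically essential (its sum $\sum 1/m_\ell$ equals $1\ne 0$), so Theorem~A immediately delivers $\scl_G(g)>0$.

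\textbf{Singleton class.} For (i) I would argue by contradiction. Assume $g_i$ is equivalent to some $g_j$ with $i\ne j$. The definition of equivalence produces $\gamma\in G$ carrying the support $S_i$ of $g_i$ onto the support $S_j$ of $g_j$. Since $S_i$ and $S_j$ are distinct complementary components of the Nielsen--Thurston multicurve of $g$, they are disjoint subsurfaces of $\Sigma$, whence $\gamma(S_i)\cap S_i=S_j\cap S_i=\emptyset$. But $g_i$ pseudo-Anosov means $S_i$ is not an annulus, and the preceding subsurface lemma of this section forces $\gamma(S_i)\cap S_i\ne\emptyset$, a contradiction. Hence the class of $g_i$ contains only $g_i$.

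\textbf{Chirality.} For (ii) I would again assume toward a contradiction that $\gamma g_i^k\gamma^{-1}=g_i^{-k}$ for some $\gamma\in G$ and $k\ne 0$. Then $\gamma(S_i)=S_i$, and the induced map $\hat\gamma$ on the capped-off surface $\hat S_i$ conjugates $\hat g_i^k$ to $\hat g_i^{-k}$, therefore swapping the stable and unstable foliations. Squaring the relation shows $\hat\gamma^2$ centralizes $\hat g_i^k$ and thus lies inside the virtually cyclic centralizer of a pseudo-Anosov in $MCG(\hat S_i)$; the swap property then forces $\hat\gamma$ to invert the generator of the infinite cyclic part, so a bounded power $\hat\gamma^{2N}$ must be trivial in $MCG(\hat S_i)$. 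The remaining step is to upgrade ``$\hat\gamma$ has finite order in $MCG(\hat S_i)$'' to ``$\hat\gamma=\mathrm{id}$'': for $\gamma\in\mathcal S$ this is exactly Corollary~\ref{involution}, and for general $\gamma\in\Gamma_p$ with $p\ge 3$ the analogous rigidity should follow by combining the separating-curve lemma of this section with the triviality of the $\Z_p$-homology action of $\gamma$ (using that $2$ is invertible modulo $p$), which together rule out the nontrivial rotation of prong structures on $\partial S_i$ that the foliation swap would demand. Once $\hat\gamma=\mathrm{id}$, the assumed relation gives $\hat g_i^{2k}=\mathrm{id}$, contradicting that $\hat g_i$ is pseudo-Anosov.

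\textbf{Main obstacle.} The hard part will be the last upgrade in the chirality step: passing from a finite-order conclusion for $\hat\gamma\in MCG(\hat S_i)$ to $\hat\gamma=\mathrm{id}$ using only $\gamma\in\Gamma_p$ rather than the stronger $\mathcal S$-membership of Corollary~\ref{involution}. The idea is to trace the foliation swap into a nontrivial rotation of each boundary circle of $S_i$ by a half prong-period, and then to exclude it boundary-component by boundary-component---separating components via the first lemma of this section, non-separating components via the mod-$p$ homology argument.
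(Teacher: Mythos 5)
Your overall route is the paper's: the singleton-class claim is exactly the paper's appeal to the lemma that an element of $G$ cannot move a non-annular subsurface off itself (distinct complementary components of the reducing multicurve are disjoint), your centralizer/axis-flip argument that $\hat\gamma$ must have finite order in $MCG(\hat S_i)$ is the same in substance as the paper's footnote (a class conjugating a pseudo-Anosov to its inverse reverses the Teichm\"uller axis, hence has a fixed point and finite order), and the endgame ($\hat\gamma=\mathrm{id}$ forces $\hat g_i^{2k}=\mathrm{id}$, absurd) together with the deduction of $\scl_G(g)>0$ from Theorem \ref{chirality} for a singleton (hence essential) chiral class is also as in the paper.

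The one point where you diverge is the step you yourself flag as the main obstacle, and there your argument has a genuine gap. The paper does not re-derive the rigidity statement; it invokes Ivanov's theorem \cite[Theorem 1.7]{ivanov}, which applies to the whole level-$p$ subgroup $G$ for $p\geq 3$, and Corollary \ref{involution} is just the special case for $\mathcal S$ (which acts trivially on $\Z/3$-homology) recorded because that is what is needed elsewhere; so the conclusion ``$\hat\gamma$ of finite order $\Rightarrow$ $\hat\gamma=\mathrm{id}$'' is available for your $\gamma\in G$ by citation. Your substitute sketch is not carried out and does not obviously close: a finite-order $\hat\gamma$ swapping the stable and unstable foliations need not manifest as a ``rotation of prong structures on $\partial S_i$,'' and, more seriously, triviality of the action of $\gamma$ on $H_1(\Sigma;\Z_p)$ does not directly control the action of $\hat\gamma$ on $H_1(\hat S_i;\Z_p)$, because $H_1(S_i)\to H_1(\Sigma)$ is far from injective (boundary classes and their combinations die or become dependent); making that passage from ambient homology to the subsurface precise is essentially the content of Ivanov's theorem, not something the separating-curve lemma plus invertibility of $2$ mod $p$ hands you. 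The repair is therefore a citation rather than a new argument: quote Ivanov's result that reductions of level-$p$ (pure) classes of finite order are trivial, in place of the $\mathcal S$-only statement of Corollary \ref{involution}.
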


\begin{proof}
Suppose $\gamma\in G$ conjugates $g_i$ to $g_i^{-1}$. Restricting to
the supporting surface and collapsing boundary to punctures gives a
mapping class $\hat \gamma$ of finite order\footnote{any mapping class
  that conjugates a pseudo-Anosov homeomorphism to its inverse flips
  the axis in Teichm\"uller space and must have a fixed point, so it
  has finite order} that conjugates $\hat g_i$
to $\hat g_i^{-1}$. This is impossible by Ivanov's theorem (Corollary
\ref{involution}). The fact that $g_i$ is not equivalent to $g_j$ for
$i\neq j$ follows from the lemma.
\end{proof}

\begin{lemma}\label{2 curves}
Suppose $a,b$ are distinct nonseparating homologous curves so
  that $a,b,f(a),f(b)$ have pairwise intersection number 0 and
  $f\in G$. Then $f(a)=a$ and $f(b)=b$. 
\end{lemma}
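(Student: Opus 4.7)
The plan is to show that this lemma is a corollary of Ivanov's theorem (stated at the beginning of this section), observing that neither the assumption that $a,b$ are homologous nor the joint pairwise disjointness of all four curves is really needed. I will prove the following more general claim: \emph{for any essential simple closed curve $c$ on $\Sigma$ and any $f\in G$, if $i(c,f(c))=0$ then $f(c)=c$ as isotopy classes.} Applying this separately to $c=a$ and $c=b$ then yields the lemma from the two hypotheses $i(a,f(a))=0$ and $i(b,f(b))=0$.

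To prove the general claim, fix the Nielsen-Thurston decomposition of $f$ guaranteed by Ivanov: each reducing curve is individually $f$-invariant, and on every complementary component $R$ the map $\hat f$ on $\hat R$ is either the identity or pseudo-Anosov. There are three exhaustive cases for the position of $c$. First, if $c$ is isotopic to a reducing curve of $f$, then $f(c)=c$ immediately. Second, if $c$ lies in the interior of an identity component $R$, then $f|_R$ is a product of Dehn twists about the components of $\partial R$; since $c$ is disjoint from $\partial R$ each such twist fixes $c$, so $f(c)=c$. Third, if $c$ lies in a pseudo-Anosov component $R$, then either $c$ is boundary-parallel in $R$ (which reduces to the first case) or $\hat c$ is essential in $\hat R$, and the pseudo-Anosov property forces $i(\hat c,\hat f(\hat c))>0$, which equals $i(c,f(c))$ and contradicts our hypothesis.

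The main technical point is in the third case: one must verify that collapsing $\partial R$ to punctures transfers the geometric intersection faithfully, i.e. $i(c,f(c))=i(\hat c,\hat f(\hat c))$. This is routine because $c,f(c)\subset\mathrm{int}(R)$ and the collapse is a homotopy equivalence on the interior. The other ingredient I would invoke is the standard fact that a pseudo-Anosov homeomorphism admits no essential simple closed curve $c$ with $i(c,f(c))=0$; a pigeonhole argument gives this quickly, since otherwise the orbit $c,f(c),f^2(c),\dots$ would consist of pairwise disjoint and pairwise non-isotopic essential curves (no $f^n$ can fix $c$), of which only finitely many exist up to isotopy on a surface of finite type.
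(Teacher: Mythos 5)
Your strengthened claim is false, so the proposed reduction cannot work. For the level subgroup $G$ the statement ``$i(c,f(c))=0$ implies $f(c)=c$'' fails for nonseparating curves: two disjoint, non-isotopic, homologous nonseparating curves $a,a'$ lie in the same Torelli orbit (the paper uses exactly this fact in the proof of Corollary \ref{multitwist}), so there is $f\in\mathcal T\subset G$ with $f(a)=a'$, hence $i(a,f(a))=0$ but $f(a)\neq a$. This is also why the subgroup $\mathcal S$ of Proposition \ref{BBF}, which does have the property you are asserting, is a proper finite-index subgroup strictly smaller than the level subgroup, and why Lemma \ref{2 curves} is stated for a \emph{pair} of homologous curves with all four curves pairwise disjoint: far from being removable conveniences, these hypotheses are exactly what make the conclusion true. (The exercise lemma preceding this one gives the one-curve statement only for \emph{separating} curves.)

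Concretely, your argument breaks in two places. First, the trichotomy for the position of $c$ is not exhaustive: $c$ may intersect the canonical reduction system of $f$ essentially, and this missing case is nonempty (it is what happens in the counterexample above). Second, in the pseudo-Anosov case the pigeonhole argument is incomplete: from $i(c,f(c))=0$ you only get $i(f^ic,f^{i+1}c)=0$ for consecutive images, not pairwise disjointness of the whole orbit, so the finiteness of disjoint curve systems does not apply; the assertion that a pseudo-Anosov cannot take an essential curve to a disjoint curve is not a formal consequence of the definition, and obtaining ``disjoint implies equal'' control is precisely the role of the special subgroup $\mathcal S$ in Proposition \ref{BBF}. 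The paper's proof goes differently and uses the homology hypothesis in an essential way: since $a$ and $b$ are disjoint and homologous they cobound subsurfaces, the four disjoint curves $a,b,f(a),f(b)$ are cyclically ordered along $\S$, and if $f(a)\neq a$ or $f(b)\neq b$ then $f$ would move a non-annular cobounding subsurface off itself, contradicting the preceding lemma; Ivanov's theorem rules out the residual possibility $f(a)=b$, $f(b)=a$.
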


\begin{proof}
The four curves are cyclically ordered along $\S$ and if the conclusion
fails $f$ takes a cobounding subsurface off of itself.
(Notice that it does not happen that $f(a)=b$ and $f(b)=a$
by \cite[Theorem 1.7]{ivanov}.)
\end{proof}

\begin{cor}\label{multitwist}
Let $f\in G$ be a multitwist in a multicurve $M$. If $M$ contains a
separating curve then $scl(f)>0$ and otherwise $scl(f)>0$ if and only
if the sum of the powers of Dehn twists over some homology class of
curves in $M$ is nonzero.
\end{cor}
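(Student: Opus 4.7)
The plan is to apply Theorem \ref{chirality} after unpacking what it says in the multitwist case. Since $f$ is a multitwist, its Nielsen--Thurston decomposition is simply $f=\delta_{c_1}^{n_1}\cdots\delta_{c_p}^{n_p}$ (with $c_i\in M$, $n_i\ne 0$), with no pseudo-Anosov components, and every pure part is a chiral power of a Dehn twist. By Theorem \ref{chirality}, $\scl_G(f)>0$ iff some equivalence class of these components is essential. Two components $\delta_{c_i}^{n_i},\delta_{c_j}^{n_j}$ are equivalent precisely when $c_i,c_j$ lie in the same $G$-orbit, and a class $\{\delta_{c_{i_1}}^{n_{i_1}},\dots,\delta_{c_{i_k}}^{n_{i_k}}\}$ is essential iff $\sum_\ell n_{i_\ell}\ne 0$: choosing $m_\ell$ so that the powers $\delta_{c_{i_\ell}}^{n_{i_\ell}m_\ell}$ are pairwise conjugate forces a common value $N=n_{i_\ell}m_\ell$, so $\sum 1/m_\ell=(1/N)\sum n_{i_\ell}$. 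The task thus reduces to describing the $G$-orbit structure on $M$.

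If $M$ contains a separating curve $c_i$, the separating-curve lemma above shows that every $\gamma\in G$ either fixes $c_i$ or meets it with positive intersection number, so $\gamma$ cannot send $c_i$ to any other (disjoint) curve of $M$. Hence $\delta_{c_i}^{n_i}$ stands alone in its equivalence class, whose essentiality sum is $n_i\ne 0$, and Theorem \ref{chirality} yields $\scl_G(f)>0$.

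When no curve in $M$ separates, every $c_i$ is nonseparating, and the heart of the argument is to identify $G$-orbits of nonseparating simple closed curves with their $\Z_p$-homology classes (up to sign). The inclusion ``$G$-orbit $\subseteq$ homology class'' is immediate since $G$ acts trivially on $H_1(\Sigma;\Z_p)$. For the converse I would take any mapping class sending one homologous curve to the other (change of coordinates) and correct it by a suitable element of the stabilizer of the target so that the composition lies in $G$; alternatively, one can invoke transitivity of the Torelli group on nonseparating curves in a fixed $\Z$-homology class together with the inclusion $\mathcal T\subset G$. Granted this identification, the equivalence classes in the decomposition of $f$ correspond bijectively to the $\Z_p$-homology classes represented among the $c_i$, and essentiality becomes exactly the stated condition that some $\sum_{c_i\in\mathcal H} n_i$ is nonzero.

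The main obstacle is this orbit-equals-homology-class identification in the no-separating-curve case. The easy direction of the iff (nonzero homology sum $\Rightarrow \scl_G(f)>0$) does not need it: since $G$-orbits always refine homology classes, a nonzero homology sum forces a nonzero orbit sum, and Theorem \ref{chirality} then supplies an essential class directly. The harder direction requires ruling out cancellations among different $G$-orbits within a single $\Z_p$-homology class, and this is precisely what the identification prevents.
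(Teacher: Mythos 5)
Your argument is correct and essentially the paper's: you reduce via Theorem \ref{chirality} to essentiality of the equivalence classes of twist components, show that a twist in a separating curve forms a singleton (hence essential) class, and identify $G$-orbits of the nonseparating curves of $M$ with their homology classes via transitivity of the Torelli group, which is exactly the fact the paper invokes (``homologous nonseparating curves are in the same $G$-orbit, in fact the same Torelli-orbit''). The only cosmetic difference is that you cite the separating-curve lemma while the paper reargues directly that no element of $G$ takes one separating curve to a distinct one; both suffice here since the curves of $M$ are disjoint.
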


\begin{proof}
After taking a power, the twists in all curves occur with power
divisible by $p$. It does not happen that there are two distinct (up to
homotopy) separating curves $a,b$ and 
$f \in G$ with $f(a)=b$ since otherwise there will be a non-separating 
curve $c$ (in a component of the complement of $a$) whose homology 
class is not fixed by $f$.
Therefore if $M$ contains a separating curve then $scl(f)>0$.
Homologous nonseprating curves are in the same
$G$-orbit (in fact, the same Torelli-orbit), so the Dehn twists in
these curves form a chiral class.
\end{proof}

\subsection{The Torelli group $\mathcal T$}

Now let $\mathcal T$ be the group of mapping classes acting trivially
in the integral homology of $\Sigma$ and let $f\in\mathcal T$. 

\begin{thm}\label{torelli}
If $f\neq 1$ then $scl_{\mathcal T}(f)>0$.
\end{thm}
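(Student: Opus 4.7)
The plan is to adapt the framework of Theorem \ref{chirality} to $\mathcal{T}$ via the normal finite-index subgroup $\mathcal{T}_0 := \mathcal{T}\cap\mathcal{S}$, which is contained in $\mathcal{S}$ and which is normal in $\mathcal{T}$ because $\mathcal{S}$ is normal in $MCG(\Sigma)$. First I would replace $f$ by a uniformly bounded power so that $f\in\mathcal{T}_0$ has a Nielsen--Thurston decomposition $f=g_1\cdots g_m\delta_1^{n_1}\cdots\delta_p^{n_p}$ with each pure component lying in $\mathcal{T}_0$.

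The easy cases would go through immediately. If some $g_i$ is pseudo-Anosov, Corollary \ref{exponential} applied to the level-3 subgroup $\Gamma_3\supset\mathcal{T}$ gives $scl_{\Gamma_3}(f)>0$, and since $\mathcal{T}$-commutators are $\Gamma_3$-commutators, $scl_{\mathcal{T}}(f)\geq scl_{\Gamma_3}(f)>0$. The same reasoning, combined with Corollary \ref{multitwist}, handles multitwists whose support contains a separating curve.

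The main case is when $f=\prod T_{c_i}^{n_i}$ with all $c_i$ disjoint nonseparating curves and $\sum n_i[c_i]=0$ in $H_1(\Sigma;\Z)$. I would take $\bY$ to be the $\mathcal S$-orbit of the annulus $A_{c_1}$. By Proposition \ref{bY}(iv), $(\mathcal{T}_0,\cC(\bY),T_{c_1},C)$ satisfies $WWPD$, and Corollary \ref{bf+++} (together with Remark \ref{bf++++}) supplies a quasi-morphism $H':\mathcal{T}_0\to\R$ with $\hat H'(T_{c_1})\neq 0$. For each $i\neq 1$, the $\mathcal S$-rigidity of Proposition \ref{BBF} forces $A_{c_i}\notin\bY$ (since $c_i$ is distinct from and disjoint from $c_1$), so Lemma \ref{bounded.projection} gives $\hat H'(T_{c_i})=0$; hence $\hat H'(f)=n_1\hat H'(T_{c_1})\neq 0$, which gives $scl_{\mathcal{T}_0}(f)>0$. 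To upgrade to $\mathcal{T}$, using that $\mathcal{T}_0$ is normal in $\mathcal{T}$, I would pick coset representatives $\gamma_1=1,\ldots,\gamma_s$ of $\mathcal{T}/\mathcal{T}_0$, define $\hat H(\gamma)=\sum_{j}\hat H'(\gamma_j\gamma\gamma_j^{-1})$ for $\gamma\in\mathcal{T}_0$, and extend to $\mathcal{T}$ as in the proof of Theorem \ref{chirality}. The $\gamma_j=1$ term contributes $n_1\hat H'(T_{c_1})\neq 0$, and by chirality of Dehn twists (Corollary \ref{involution}) every other nonzero contribution should share the sign of $\hat H'(T_{c_1})$, so $\hat H(f)\neq 0$ and Proposition \ref{p:scl}(ii) yields $scl_{\mathcal{T}}(f)>0$.

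The hard part will be the sign control in this averaging step: if for some $i\neq 1$ and $j>1$ the curve $\gamma_j(c_i)$ lies in the $\mathcal S$-orbit of $c_1$, then $n_i\hat H'(T_{\gamma_j(c_i)})$ must share the sign of $n_1\hat H'(T_{c_1})$ rather than cancel it. Ensuring this requires a Torelli-specific rigidity statement, strictly stronger than Lemma \ref{2 curves} for $\Gamma_p$: no $\phi\in\mathcal{T}$ may map a curve in the multicurve of $f$ to a curve in the $\mathcal S$-orbit of a different curve of that multicurve in a manner that reverses the relative twist direction. The essential use of $\mathcal{T}$ acting trivially on $H_1(\Sigma;\Z)$ rather than only on $\Z/p$-homology will enter precisely here, through an analysis of the action of $\phi$ on the first homology of the subsurfaces cobounded by bounding pairs of homologous curves.
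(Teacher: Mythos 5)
Your handling of the two easy cases (exponential growth via Corollary \ref{exponential} and monotonicity of $scl$ under passing to the subgroup $\mathcal T<\Gamma_3$, and multitwists containing a separating curve via Corollary \ref{multitwist}) agrees with the paper. The gap is in your main case, and it is not just a missing lemma: the approach cannot work there. A first problem is that no power of a Dehn twist about a nonseparating curve lies in $\mathcal T$ (it acts on $H_1(\Sigma;\Z)$ by a nontrivial transvection), so the pure components $T_{c_i}^{n_i}$ of $f$ are never in $\mathcal T_0$, the quantities $\hat H'(T_{c_i})$ are undefined, and the identity $\hat H'(f)=\sum_i n_i\hat H'(T_{c_i})$ is unavailable. (One can still get $scl_{\mathcal T_0}(f)>0$ by running Corollary \ref{bf+++} with $f$ itself as the hyperbolic element on $\cC(\bY)$.) The fatal problem is the averaging step. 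Take $f=T_aT_b^{-1}$ a bounding pair map. Homologous nonseparating curves lie in the same Torelli orbit --- the paper uses exactly this in the proof of Corollary \ref{multitwist} --- so there is $\phi\in\mathcal T$ with $\phi(b)=a$; being orientation preserving, $\phi$ conjugates $T_b^{-1}$ to $T_a^{-1}$. Moreover $\phi(a)$ is disjoint from and distinct from $a$, so by Proposition \ref{BBF} it is not in $\mathcal S\cdot a$, and hence $\phi f\phi^{-1}=T_{\phi(a)}T_a^{-1}$ acts on the line $\cC(A_a)$ exactly as $T_a^{-1}$ does. Thus the coset of $\phi$ contributes $-\hat H'(f)$ to the average, cancelling the trivial coset; carrying out the bookkeeping for all cosets shows $\hat H(f)$ is a positive multiple of $\sum_{[c_i]=[c_1]}n_i$, i.e.\ the averaged quasi-morphism detects precisely the homology-class sums of Corollary \ref{multitwist} and vanishes identically on bounding pair maps. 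The ``Torelli-specific rigidity'' you hope for is therefore false, and no choice of $\bY$ repairs it.

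The paper's route through this residual case is entirely different and non-geometric: it invokes the theorem of Bestvina--Bux--Margalit \cite{BBM} that a nontrivial multitwist in $\mathcal T$ supported on nonseparating curves has infinite order in the abelianization of $\mathcal T$. Hence no power of $f$ lies in $[\mathcal T,\mathcal T]$, so $cl_{\mathcal T}(f^n)=\infty$ for all $n$ and $scl_{\mathcal T}(f)=\infty>0$ under the paper's conventions. Positivity here is witnessed by an honest homomorphism $\mathcal T\to\R$ coming from $H_1(\mathcal T)$, not by any projection-complex quasi-morphism, which is why this case cannot be absorbed into the WWPD framework you are using.
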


\begin{proof}
Since $\mathcal T<\Gamma_3$, if $f$ grows exponentially we have
$scl(f)>0$ (Corollary \ref{exponential}). So suppose $f$ is a multitwist supported on the multicurve
$M$. If $M$ contains a separating curve then $scl(f)>0$ (Corollary \ref{multitwist}), and otherwise
$f$ has infinite order in the abelianization of $\mathcal T$, see
\cite{BBM}, therefore $scl(f)>0$ by Corollary \ref{multitwist}.
\end{proof}
\bibliography{./ref2}

\end{document}